\newdimen\imgresize@width
\newdimen\imgresize@threshold
  \pgfmathsetlength\imgresize@width{#1}%
  \edef\BODY{
    \ignorespaces\unexpanded\expandafter{\BODY}\unskip
  }%
  \def\imgresize@scale{1}%
  \tikzset{every picture/.style={scale=\imgresize@scale}}%
  \pgfmathsetlengthmacro\imgresize@diff{abs(\imgresize@width-\wd0)}%
\imgresize@threshold\do{%
    \let\imgresize@oldscale\imgresize@scale
    \pgfmathsetmacro\imgresize@scale{%
      \imgresize@width*\imgresize@scale/\the\wd0
    }%
    \ifx\imgresize@scale\imgresize@oldscale
      \PackageWarning{imgresize}{%
        Scale factor does not change anymore,\MessageBreak
        width difference is \imgresize@diff,\MessageBreak
        larger than threshold \the\imgresize@threshold
      }%
      \def\imgresize@diff{0pt}
    \else
      \advance\count@\@ne
      \typeout{* imgresize: try=\the\count@, scale=\imgresize@scale}%
      \sbox0{\BODY}%
      \pgfmathsetlengthmacro\imgresize@diff{abs(\imgresize@width-\wd0)}%
    \fi
  }%
\theoremstyle{definition}
\newtheorem{defn}{Definition}[section]
\newtheorem{rmk}[defn]{Remark}
\newtheorem{question}[defn]{Question}
\theoremstyle{plain}
\newtheorem{thm}[defn]{Theorem}
\newtheorem{lem}[defn]{Lemma}
\newtheorem{prop}[defn]{Proposition}
\newtheorem{cor}[defn]{Corollary}
\def\C{\ensuremath{\mathbb{C}}}
\def\L{\ensuremath{\mathbb{L}}}
\def\N{\ensuremath{\mathbb{N}}}
\def\P{\ensuremath{\mathbb{P}}}
\def\R{\ensuremath{\mathbb{R}}}
\def\Z{\ensuremath{\mathbb{Z}}}
\def\AA{\ensuremath{\mathcal A}}
\def\BB{\ensuremath{\mathcal B}}
\def\EE{\ensuremath{\mathcal E}}
\def\FF{\ensuremath{\mathcal F}}
\def\GG{\ensuremath{\mathcal G}}
\def\HH{\ensuremath{\mathcal H}}
\def\II{\ensuremath{\mathcal I}}
\def\KK{\ensuremath{\mathcal K}}
\def\MM{\ensuremath{\mathcal M}}
\def\OO{\ensuremath{\mathcal O}}
\def\PP{\ensuremath{\mathcal P}}
\def\TT{\ensuremath{\mathcal T}}
\def\ch{\mathop{\mathrm{ch}}\nolimits}
\def\Coh{\mathop{\mathrm{Coh}}\nolimits}
\def\Db{\mathop{\mathrm{D}^{\mathrm{b}}}\nolimits}
\def\Ext{\mathop{\mathrm{Ext}}\nolimits}
\def\hom{\mathop{\mathrm{hom}}\nolimits}
\def\Hom{\mathop{\mathrm{Hom}}\nolimits}
\def\Ker{\mathop{\mathrm{Ker}}\nolimits}
\def\rk{\mathop{\mathrm{rk}}}
\def\Stab{\mathop{\mathrm{Stab}}\nolimits}
\def\Ku{\mathop{\mathrm{Ku}}\nolimits}
\def\Forg{\mathop{\mathrm{Forg}}\nolimits}
\def\dlt{\Delta_{\mathcal B_0}}
\def\chbo{\ch^{-1}_{\BB_0,1}}
\def\chbt{\ch^{-1}_{\BB_0,2}}
\def\chbl{\ch^{-1}_{\BB_0,\leq 2}}
\def\zab{Z_{\alpha,\beta}}
\def\sab{\sigma_{\alpha,\beta}}
\def\mab{\mu_{\alpha,\beta}}
\def\onto{\ensuremath{\twoheadrightarrow}}
\newcommand\blfootnote[1]{%
  \begingroup
  \renewcommand\thefootnote{}\footnote{#1}%
  \addtocounter{footnote}{-1}%
  \endgroup
}
\author{Chunyi Li}
\address{C.\ L.: Mathematics Institute (WMI), University of Warwick, Coventry, CV4 7AL, United Kingdom.}
\email{C.Li.25@warwick.ac.uk}
\urladdr{https://sites.google.com/site/chunyili0401/}
\author{Laura Pertusi}
\address{L.\ P.: Dipartimento di Matematica F.\ Enriques, Universit\`a degli studi di Milano, Via Cesare Saldini 50, 20133 Milano, Italy.}
\email{laura.pertusi@unimi.it}
\urladdr{http://www.mat.unimi.it/users/pertusi/}
\author{Xiaolei Zhao}
\address{X.\ Z.: Department of Mathematics, University of California, Santa Barbara, South Hall 6705, Santa Barbara, CA 93106, USA.}
\email{xlzhao@ucsb.edu}
\urladdr{https://sites.google.com/site/xiaoleizhaoswebsite/}
\begin{document}
\title{Twisted Cubics on Cubic Fourfolds and Stability Conditions}
\maketitle 

\begin{abstract}
We give an interpretation of the Fano variety of lines on a cubic fourfold and of the hyperk\"ahler eightfold, constructed by Lehn, Lehn, Sorger and van Straten from twisted cubic curves in a cubic fourfold non containing a plane, as moduli spaces of Bridgeland stable objects in the Kuznetsov component. As a consequence, we reprove the categorical version of Torelli Theorem for cubic fourfolds, we obtain the identification of the period point of LLSvS eightfold with that of the Fano variety, and we discuss derived Torelli Theorem for cubic fourfolds.  
\end{abstract}

\blfootnote{\textup{2010} \textit{Mathematics Subject Classification}: \textup{14D20}, \textup{14F05}, \textup{14J35}, \textup{14J45}, \textup{18E30}.}
\blfootnote{\textit{Key words and phrases}: Cubic fourfolds, Bridgeland stability conditions, moduli spaces, generalized twisted cubic curves.}

\section{Introduction}

Hyperk\"ahler geometry is a central research area in differential geometry and algebraic geometry. Although much effort has been made, it is still difficult to construct compact hyperk\"ahler varieties. The first known examples are Hilbert schemes of points on K3 surfaces (see \cite{Beauville:HK}), or more generally, moduli spaces of stable sheaves on K3 surfaces (see \cite{Mukai:BundlesK3}). Note that this construction only provides codimension one loci in the polarized moduli spaces.

Another way to construct compact hyperk\"ahler manifolds is via classical algebraic geometry. Let $Y$ be a cubic fourfold and consider the Fano variety $F_Y$ of lines on $Y$. It was shown in \cite{Beauville:HK} that $F_Y$ is a smooth projective hyperk\"ahler fourfold, deformation equivalent to the Hilbert square of a K3 surface. More recently, in \cite{LLSvS} the authors constructed a hyperk\"ahler eightfold $M_Y$ from the irreducible component of the Hilbert scheme of twisted cubic curves on $Y$. One advantage of this approach is that it provides locally complete families of (polarized) projective hyperk\"ahler manifolds.

On the other hand, the geometry of cubic fourfolds has a deep connection with K3 surfaces. The Hodge theoretic interaction was fully explored in the literature, e.g.\ in \cite{Hassett-specialcubic4fold}. From a categorical viewpoint, in \cite{Kuz:fourfold} it is proved that the bounded derived category of coherent sheaves on a cubic fourfold $Y$ admits a semiorthogonal decomposition of the form
\[\Db(Y)=\langle\Ku(Y),\OO_Y,\OO_Y(H),\OO_Y(2H)\rangle.\]
In particular, the \emph{Kuznetsov component} $\Ku(Y)$ is a K3 category, i.e.\ its Serre functor is equal to the homological shift $[2]$. In the celebrated work \cite{BLMS:kuzcomponent}, the authors provide a construction of Bridgeland stability conditions on $\Ku(Y)$ (see Section \ref{section:constructstabonku} for a summary of this construction). In the following, we will denote these stability conditions by $\sigma$. As a consequence, it is possible to study moduli spaces of stable objects in the Kuznetsov component. 

The aim of this paper is to give a description of $F_Y$ and $M_Y$ in terms of moduli spaces of stable objects in the Kuznetsov component, with respect to the Bridgeland stability conditions constructed in \cite{BLMS:kuzcomponent}. 

Recall that the algebraic Mukai lattice of $\Ku(Y)$ always contains an $A_2$ lattice spanned by two classes $\lambda_1$ and $\lambda_2$ (see Section \ref{section:constructstabonku}). We denote by $M_\sigma(v)$ the moduli space of $\sigma$-stable objects in $\Ku(Y)$ with Mukai vector $v$. To each line $\ell$ on $Y$, we can associate an object $P_\ell\in \Ku(Y)$, of Mukai vector $\lambda_1+\lambda_2$ (see Section \ref{section:lines}). Our first result gives a reconstruction of $F_Y$ as follows.

\begin{thm}
\label{thm:Fano_main}
For any line $\ell$ in a cubic fourfold $Y$, the object $P_\ell$ is $\sigma$-stable and the moduli space $M_\sigma(\lambda_1+\lambda_2)$ is isomorphic to the Fano variety $F_Y$.
\end{thm}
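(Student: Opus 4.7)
The plan is to split the theorem into two parts: (i) the stability of $P_\ell$ for each line $\ell$, and (ii) the identification of the resulting classifying map $F_Y\to M_\sigma(\lambda_1+\lambda_2)$ with an isomorphism. First I would recall the definition of $P_\ell$ from Section \ref{section:lines} (typically the projection to $\Ku(Y)$ of a natural sheaf on $\ell$, or a cone in a short triangle involving $\II_\ell$ and shifts of $\OO_Y, \OO_Y(H)$). A Hirzebruch--Riemann--Roch computation then confirms the Mukai vector is $\lambda_1+\lambda_2$, and the Mukai pairing gives $\langle\lambda_1+\lambda_2,\lambda_1+\lambda_2\rangle=2$, so the expected dimension of the moduli space matches $\dim F_Y=4$.

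For stability, I would work with the weak stability conditions $\sab$ on $\Db(Y)$ that enter the BLMS construction. The first step is to show that $P_\ell$ lies in the heart $\AA(\alpha,\beta)\cap\Ku(Y)$ inducing $\sigma$, for suitable parameters $(\alpha,\beta)$; this reduces to computing the $\sab$-semistable factors of the sheaves appearing in the definition of $P_\ell$ and checking their phases lie in the appropriate tilted interval. Once $P_\ell\in\AA(\alpha,\beta)\cap\Ku(Y)$, I would prove $\sigma$-stability by a wall-and-chamber analysis: any destabilizing short exact sequence determines a rank-$2$ sublattice of the algebraic Mukai lattice containing $v=\lambda_1+\lambda_2$, and the admissible classes $v_1$ of a destabilizer satisfy sharp numerical bounds (nonnegativity of $\langle v_1,v_1\rangle$ and $\langle v-v_1,v-v_1\rangle$, together with equal phase constraints). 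Enumerating these finitely many possibilities and excluding each---either numerically (no integral solution in the relevant strip) or geometrically (no such subobject of $P_\ell$ exists, using $\Hom$-vanishings obtained from the defining triangle of $P_\ell$)---yields stability.

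Next, the assignment $\ell\mapsto P_\ell$ is to be globalized: using the universal line $\mathcal L\subset F_Y\times Y$ and the Fourier--Mukai-type projection functor $\Db(Y)\to\Ku(Y)$ applied in families, one obtains a flat family of objects in $\Ku(Y)$ with Mukai vector $\lambda_1+\lambda_2$ parametrized by $F_Y$, all of which are $\sigma$-stable by the previous step. By the moduli property (in the version available for $\sigma$-stable objects in $\Ku(Y)$, via the general theory for K3 categories), this produces a morphism $f\colon F_Y\to M_\sigma(\lambda_1+\lambda_2)$. Injectivity of $f$ follows by recovering the line $\ell$ from $P_\ell$---e.g.\ from the support or cohomology sheaves of the natural sheaf projecting to $P_\ell$, or from the locus where a canonical map involving $P_\ell$ fails to be surjective. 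To promote $f$ to an isomorphism, I would invoke the smoothness of $M_\sigma(\lambda_1+\lambda_2)$ (from Serre duality in $\Ku(Y)$, which is a K3 category), combined with the dimension count $\dim M_\sigma=\langle v,v\rangle+2=4=\dim F_Y$ and the irreducibility of $F_Y$; an injective morphism between smooth projective varieties of the same dimension, with $M_\sigma$ irreducible (or with $f$ factoring through an irreducible component), is then an isomorphism.

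The main obstacle is the stability step. The complication is that $\sigma$ is obtained by a double tilt and restriction to $\Ku(Y)$, so $P_\ell$ is never a simple sheaf and its Harder--Narasimhan factors in the weak stability conditions are not immediate; the numerical exclusion of walls must be supplemented by genuinely sheaf-theoretic arguments using special properties of lines on $Y$ (such as normal bundle calculations and vanishings for $\II_\ell$), and care is needed to make sure the argument is uniform across all lines, including lines of second type where the normal bundle splits non-generically.
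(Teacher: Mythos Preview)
Your overall architecture is fine, and the moduli-space step (globalize the universal line, project, use injectivity plus a dimension count) is essentially what the paper does. The stability step, however, has a genuine gap. You propose to run the wall-crossing inside the algebraic Mukai lattice of $\Ku(Y)$: enumerate rank-$2$ sublattices containing $v=\lambda_1+\lambda_2$, bound destabilizer classes by $\langle v_1,v_1\rangle\ge -2$ and $\langle v-v_1,v-v_1\rangle\ge -2$, and exclude each. This works only when the algebraic Mukai lattice is exactly the $A_2$ lattice $\langle\lambda_1,\lambda_2\rangle$, i.e.\ for \emph{very general} $Y$---and that case was already handled in the Appendix to \cite{BLMS:kuzcomponent}. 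For special $Y$ the algebraic Mukai lattice can have arbitrary rank, the set of candidate destabilizing classes is neither finite nor uniformly bounded, and no amount of $\Hom$-vanishing from the defining triangle of $P_\ell$ will rule them all out. A related misconception: the weak stability conditions $\sab$ entering the BLMS construction do not live on $\Db(Y)$; they live on $\Db(\P^3,\BB_0)$, the derived category of modules over the even Clifford algebra of the conic fibration obtained by blowing up an auxiliary line $L\subset Y$ and projecting.

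The paper's remedy is to carry out the entire wall-crossing on the $\Db(\P^3,\BB_0)$ side. One transports $P_\ell$ to $M_\ell=\Psi(\sigma^*P_\ell)$ and works with the weak stability conditions $\sigma_{\alpha,-1}$ there; the crucial gain is that walls are now governed by the truncated character $\ch_{\BB_0,\le 2}$, which takes values in a \emph{fixed} rank-$3$ lattice independent of $Y$, so the enumeration is finite and uniform across all cubic fourfolds. Two further ingredients you are missing are: (i) the stability condition on $\Ku(Y)$ is independent of the auxiliary line $L$ (Proposition~\ref{prop:change_line}), which lets one choose $L$ disjoint from $\ell$ and compute $M_\ell$ explicitly as an extension of a torsion sheaf $\EE_\ell$ by $\BB_{-1}[1]$; and (ii) a classification showing that any $\sigma_{\alpha,\beta}$-semistable object of negative rank and discriminant zero is $\BB_i^{\oplus n}[1]$ (Lemma~\ref{lem:dlt0obj}). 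With these in hand there is a single potential wall at $\alpha=\sqrt{5}/4$, the would-be destabilizing quotient is identified as $\BB_0[1]$, and the orthogonality $M_\ell\in\langle\BB_1,\BB_2,\BB_3\rangle^\perp$ excludes it.
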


As $\Ku(Y)$ is a K3 category, the space $M_\sigma(\lambda_1+\lambda_2)$ is naturally equipped with a holomorphic symplectic form, constructed as in \cite{Mukai:BundlesK3}. This gives a more conceptual explanation of the existence of the holomorphic symplectic structure.

The case of twisted cubics on $Y$ is even more interesting from many perspectives. Assume that $Y$ does not contain a plane. It was shown in \cite{LLSvS} that the irreducible component $M_3$ of the Hilbert scheme parametrizing twisted cubic curves on $Y$ is a smooth projective variety of dimension ten. Moreover, they proved that the morphism sending $C$ to the three-dimensional projective space spanned by $C$ factorizes through a $\P^2$-fibration $M_3 \rightarrow M_Y'$. Here the variety $M_Y'$, constructed by studying determinantal representations of cubic surfaces in $Y$, is smooth and projective of dimension eight. Finally, they proved that the divisor in $M_Y'$ determined by non CM twisted cubics on $Y$ can be contracted and the resulting variety $M_Y$ is a smooth projective hyperk\"ahler eightfold. In addition, the cubic fourfold $Y$ is contained in $M_Y$ as a Lagrangian submanifold and $M_Y'$ is the blow-up of $M_Y$ in $Y$.   

From the categorical point of view, every twisted cubic curve $C$ in $Y$ has an associated object $F_C'$ in $\Ku(Y)$ with Mukai vector $2\lambda_1+\lambda_2$ (see Section \ref{section:twiscubobj}). Note that the moduli space $M_\sigma(2\lambda_1+\lambda_2)$ is a projective hyperk\"ahler eightfold by \cite{liurendapaper}. Our main result is the following.

\begin{thm}[Theorem \ref{mainThm} and Theorem \ref{thm:mainmoduli}]
\label{thm:maintwistedcubic}
Let $Y$ be a smooth cubic fourfold not containing a plane. If $C$ is a twisted cubic on $Y$, then the object $F_C'$ is $\sigma$-stable. Moreover, the projective hyperk\"ahler eightfold $M_\sigma(2\lambda_1+\lambda_2)$ parametrizes only objects of the form $F_C'$, and it is isomorphic to the LLSvS eightfold $M_Y$.
\end{thm}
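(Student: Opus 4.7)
The plan is to attack the two assertions in three stages: first prove $\sigma$-stability of $F_C'$; then use the resulting family of stable objects to construct a morphism $\bar f\colon M_Y\to M_\sigma(2\lambda_1+\lambda_2)$ compatible with the LLSvS contractions; and finally upgrade $\bar f$ to an isomorphism by classifying all $\sigma$-stable objects with the given Mukai vector.

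\emph{Stability.} I would realise $F_C'$ explicitly as the image under the Kuznetsov projection of a natural two-term complex built from the ideal sheaf $I_{C/Y}$, twisted so that its Mukai vector is $2\lambda_1+\lambda_2$, and check that $F_C'$ lies in the tilted heart $\mathcal{A}\cap\Ku(Y)$ of \cite{BLMS:kuzcomponent} defining $\sigma$. To rule out destabilising sub and quotient objects I would enumerate all numerical decompositions $2\lambda_1+\lambda_2=v_1+v_2$ in the rank-two algebraic Mukai lattice, discard those forbidden by the Bogomolov-type inequality for tilt-stable objects, and eliminate the surviving candidates by explicit $\Hom$ or $\Ext^1$ vanishings read off from the projective geometry of $C$ as a determinantal subvariety of its spanning $\P^3$. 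Openness of stability then extends this conclusion to the whole family over $M_3$.

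\emph{Moduli morphism.} Stability in families yields a classifying morphism $f\colon M_3\to M_\sigma(2\lambda_1+\lambda_2)$, which I would show factors through the two LLSvS contractions $M_3\to M_Y'\to M_Y$. For the $\P^2$-fibration $M_3\to M_Y'$: two twisted cubics lying in the same linear system on a common cubic surface $S=Y\cap\P^3$ share the line bundle $\OO_S(C)$, so the complex defining $F_C'$ depends only on the fibre in $M_Y'$, giving $F_C'\cong F_{C'}'$. For the divisorial contraction $M_Y'\to M_Y$: along the non-CM locus $C$ specialises to a planar cubic together with an embedded point on a residual line $\ell$, and direct inspection of the defining triangle of $F_C'$ shows that the resulting object depends only on $\ell$, collapsing the non-CM divisor onto the Lagrangian image of $Y\hookrightarrow M_Y$ and producing the desired $\bar f$.

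\emph{Isomorphism and main obstacle.} Both $M_Y$ and $M_\sigma(2\lambda_1+\lambda_2)$ are smooth irreducible projective hyperk\"ahler eightfolds, the latter by \cite{liurendapaper}. Injectivity of $\bar f$ on a dense open subset (verified on a generic cubic surface, where distinct linear systems and distinct non-CM strata give non-isomorphic projections), combined with properness of $\bar f$ and the irreducibility and dimension of the target, forces $\bar f$ to be a set-theoretic bijection, hence an isomorphism between smooth varieties. The genuinely hard step is the classification assertion that $M_\sigma(2\lambda_1+\lambda_2)$ parametrises \emph{only} objects of the form $F_C'$, which requires reconstructing the curve $C$ from an arbitrary $\sigma$-stable object with the given Mukai vector (for instance by analysing its cohomology with respect to the standard $t$-structure on $\Db(Y)$ and recognising a twist of $I_{C/Y}$) and handling uniformly the degeneration of the sheaf-theoretic shape of $F_C'$ across the CM and non-CM strata; this uniformity is where I would expect most of the technical work to be concentrated.
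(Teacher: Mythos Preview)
Your stability argument has a genuine gap that is precisely the obstacle the paper is written to overcome. You propose to enumerate decompositions $2\lambda_1+\lambda_2=v_1+v_2$ in the ``rank-two algebraic Mukai lattice'' and rule them out by Bogomolov plus $\Hom$-vanishings. But the algebraic Mukai lattice of $\Ku(Y)$ is the $A_2$ lattice only for \emph{very general} $Y$; for special cubic fourfolds it can have arbitrarily large rank, and your enumeration becomes uncontrollable. This is exactly why the paper does \emph{not} argue inside $\Ku(Y)$: instead it passes via the conic fibration to $\Db(\P^3,\BB_0)$, where $\ch_{\BB_0,\leq 2}$ lands in a fixed rank-three lattice independent of how special $Y$ is. The wall-crossing is then carried out for the weak stability conditions $\sigma_{\alpha,-1}$ on $\Db(\P^3,\BB_0)$ (Propositions~\ref{walls}, \ref{3/4}, Lemmas~\ref{lemma:secondwallforcubic}, \ref{lemma:linecubicinKu}), with the crucial Lemma~\ref{lem:dlt0obj} pinning down destabilising objects of discriminant zero as shifts of $\BB_i^{\oplus n}$. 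Only after showing $E_C'$ survives all walls does one pass back to $\sigma$ on $\Ku(Y)$. Your direct approach reproduces the very-general case already handled in \cite[Appendix]{BLMS:kuzcomponent} and \cite{LLMS}, but not the general statement.

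You also misidentify where the difficulty lies in the moduli part. You flag the classification assertion---that every $\sigma$-stable object of class $2\lambda_1+\lambda_2$ is some $F_C'$---as ``genuinely hard'' and propose reconstructing $C$ from an arbitrary stable object. In the paper this is a one-line soft argument: the projected universal family on $M_3$ gives a morphism $M_3\to M_\sigma(2\lambda_1+\lambda_2)$; since $M_3$ is projective and the target is irreducible of dimension $8$ by \cite{liurendapaper}, the map is surjective, and every point is an $F_C'$. The factorisation through $M_Y$ is then read off from \cite[Proposition~2]{AddingtonNicolas2014}, which already characterises when $F_{C_1}'\cong F_{C_2}'$. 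No reconstruction of $C$, no generic-injectivity argument, and no separate analysis of the non-CM stratum is needed.
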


\noindent \textbf{Applications.}
As explained in the Appendix of \cite{BLMS:kuzcomponent}, where they treated the case of very general cubic fourfolds, the interpretation of the Fano variety $F_Y$ as a moduli space of stable objects in $\Ku(Y)$ can be used to give a different proof of the categorical version of Torelli Theorem for cubic fourfolds. Thus, Theorem \ref{thm:Fano_main} allows to apply this argument without assumptions on $Y$ (Corollary \ref{cor:Torellicat}). 

A direct consequence of Theorem \ref{thm:maintwistedcubic} is the identification of the period point of $M_Y$ with that of $F_Y$.

\begin{prop}[Proposition \ref{prop:period$M_Y$}]
For a cubic fourfold $Y$ not containing a plane, the period point of $M_Y$ is identified with the period point of the Fano variety $F_Y$.
\end{prop}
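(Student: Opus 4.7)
The plan is to derive the proposition as a direct consequence of Theorems \ref{thm:Fano_main} and \ref{thm:maintwistedcubic}, combined with the general Mukai-type theory for moduli of Bridgeland stable objects in K3 categories established in \cite{liurendapaper}. By those two theorems, we have canonical identifications $F_Y \cong M_\sigma(v_1)$ and $M_Y \cong M_\sigma(v_2)$ inside the same Kuznetsov component $\Ku(Y)$, where $v_1 := \lambda_1+\lambda_2$ and $v_2 := 2\lambda_1+\lambda_2$. A direct computation using the Mukai pairing on the $A_2$ lattice (with $\langle \lambda_i,\lambda_i\rangle = 2$ and $\langle \lambda_1,\lambda_2\rangle = -1$) gives $\langle v_1,v_1\rangle = 2$ and $\langle v_2,v_2\rangle = 6$, so both vectors are primitive of positive square, matching the expected dimensions $4$ and $8$.

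Next I would invoke the Mukai-style Hodge isometry from \cite{liurendapaper}: for a primitive Mukai vector $v$ with $\langle v,v\rangle > 0$, there is a Hodge isometry
\[
\theta_v \colon v^{\perp} \subset \widetilde{H}(\Ku(Y),\Z) \;\xrightarrow{\sim}\; H^2(M_\sigma(v),\Z),
\]
intertwining the Mukai pairing with the Beauville--Bogomolov form. In particular, $\theta_v$ sends the transcendental part of $v^\perp$ isomorphically onto the transcendental lattice of $M_\sigma(v)$, as integral polarized Hodge structures. Applying this to $v_1$ and $v_2$ gives Hodge isometries
\[
H^2(F_Y,\Z) \cong v_1^{\perp} \quad \text{and} \quad H^2(M_Y,\Z) \cong v_2^{\perp}.
\]

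The conclusion then follows from the observation that both $v_1$ and $v_2$ are algebraic classes, so the transcendental lattice $\widetilde{H}_{\mathrm{tr}}(\Ku(Y),\Z)$, being the orthogonal complement of the full algebraic sublattice in $\widetilde{H}(\Ku(Y),\Z)$, is contained in both $v_1^\perp$ and $v_2^\perp$, and is identified via $\theta_{v_1}$ and $\theta_{v_2}$ with the transcendental lattices of $F_Y$ and $M_Y$ respectively. Since the period point of a projective hyperk\"ahler manifold is determined by its transcendental Hodge structure inside $H^2$, the two period points are identified by the composition $\theta_{v_2} \circ \theta_{v_1}^{-1}$ restricted to transcendental classes.

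The main technical point is the availability of the integral Mukai-style isometry $\theta_v$ for arbitrary (not necessarily very general) cubic fourfolds not containing a plane; this is precisely the content of \cite{liurendapaper}, and once it is in hand, the proof is essentially a formal lattice-theoretic argument. No further computation beyond the lattice statement above is needed.
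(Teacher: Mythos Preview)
Your approach is essentially the same as the paper's: both invoke the Mukai-type Hodge isometry $H^2(M_\sigma(v),\Z)\cong v^\perp$ from \cite{liurendapaper} together with the identifications of Theorems \ref{thm:Fano_main} and \ref{thm:maintwistedcubic}. The only difference is in the final step. You argue via transcendental lattices, noting that $T(F_Y)$ and $T(M_Y)$ are both Hodge isometric to the transcendental part of $\widetilde{H}(\Ku(Y),\Z)$. The paper instead identifies the polarization classes explicitly---the Pl\"ucker class on $F_Y$ corresponds to $\lambda_1+2\lambda_2$, the natural degree-$2$ class on $M_Y$ to $\lambda_1$---and then observes that both \emph{primitive} second cohomologies are Hodge isometric to the \emph{same} sublattice $\langle\lambda_1,\lambda_2\rangle^\perp$ with its inherited Hodge structure. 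This is the sharper statement: the period maps $^2p_6^{(2)}$ and $^4p_2^{(2)}$ are \emph{polarized} period maps, so the relevant comparison is of the full primitive $H^2$, not just its transcendental part. Your sentence ``the period point \ldots is determined by its transcendental Hodge structure'' is fine when $Y$ is very general (primitive $=$ transcendental), but for special $Y$ one should, as the paper does, match the primitive lattices themselves; this follows immediately once the polarization classes are named, since $v_1,v_2\in\langle\lambda_1,\lambda_2\rangle$ forces $\langle v_i,\text{polarization}_i\rangle^\perp=\langle\lambda_1,\lambda_2\rangle^\perp$ in both cases.
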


A second application of Theorem \ref{thm:maintwistedcubic} is the characterization of when $M_Y$ is birational to a Hilbert scheme of points on a K3 surface (Proposition \ref{prop:eightfoldvsHilb}).

Derived Torelli Theorem has been proved in \cite{Huybrechts:cubicfourfold} for very general cubic fourfolds, for cubic fourfolds with an associated K3 surface and for general cubic fourfolds. Section \ref{subsec:derTor} is an attempt to extend this result for every cubic fourfold. In particular, we show that our strategy works in the simple case of the identity on $\Ku(Y)$, as explained below. 

\begin{prop}[Proposition \ref{prop:Y=Y'}]
Let $Y$ be a cubic fourfold not containing a plane. Then the composition of the projection functor on the Kuznetsov component of $Y$ with the embedding $\Ku(Y) \hookrightarrow \Db(Y)$ is a Fourier-Mukai functor with kernel given by the restriction of the (quasi-)universal family on $M_{\sigma}(2\lambda
_1+\lambda_2) \times Y$ to $Y \times Y$.
\end{prop}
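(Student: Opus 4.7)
The plan is to identify the morphism $f\colon Y \to M_{\sigma}(2\lambda_1+\lambda_2)$ defined on points by $y\mapsto \mathrm{pr}(\OO_y)$ with the Lagrangian embedding $Y\hookrightarrow M_Y$ of the LLSvS construction (via Theorem \ref{thm:maintwistedcubic}), and then to appeal to the universal property of the moduli space to match the kernels.

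First I would set up the Fourier--Mukai formalism for the projection. Since the semiorthogonal decomposition of $\Db(Y)$ from \cite{Kuz:fourfold} is admissible with generators $\OO_Y, \OO_Y(H), \OO_Y(2H)$, the projection $\mathrm{pr}\colon \Db(Y)\to \Ku(Y)$, and hence the composition $\iota\circ\mathrm{pr}\colon \Db(Y)\to \Db(Y)$, is of Fourier--Mukai type with an explicit kernel $K_{\mathrm{pr}}\in \Db(Y\times Y)$ obtained by mutating the structure sheaf of the diagonal through these three line bundles. The fiber of $K_{\mathrm{pr}}$ over a point $y$ in the first factor is $\mathrm{pr}(\OO_y)$, and a direct Chern character computation from the mutation triangle shows that $\mathrm{pr}(\OO_y)\in\Ku(Y)$ has Mukai vector $2\lambda_1+\lambda_2$. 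Provided it is also $\sigma$-stable, $K_{\mathrm{pr}}$ then realizes a flat family of stable objects on $Y$ and thus classifies a morphism $f\colon Y\to M_{\sigma}(2\lambda_1+\lambda_2)$.

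The main obstacle is to check that $f$ coincides with the Lagrangian embedding $Y\hookrightarrow M_Y$. Geometrically, the preimage of the embedded $Y$ under the LLSvS blow-down $M_Y'\to M_Y$ is the divisor of non-CM twisted cubics, and over a point $y\in Y$ it parametrizes exactly those non-CM cubics $C$ whose spanned $\mathbb{P}^3$ cuts $Y$ in a cubic surface singular at $y$. The approach I would take is to exhibit, for such a $C$, a natural isomorphism $F_C'\cong \mathrm{pr}(\OO_y)$ in $\Ku(Y)$, either by a direct homological computation starting from the defining triangle of $F_C'$ in Section \ref{section:twiscubobj}, or by following a flat limit of smooth twisted cubics degenerating to such a non-CM $C$ and tracking the corresponding limit of $F_C'$. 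Once this identification is in hand, Theorem \ref{thm:maintwistedcubic} simultaneously supplies the required $\sigma$-stability of $\mathrm{pr}(\OO_y)$ and the agreement of $f$ with the Lagrangian embedding, after which the classifying property of the quasi-universal family yields $K_{\mathrm{pr}}\cong (f\times\id_Y)^*\EE = \EE|_{Y\times Y}$ (up to the standard normalization of the quasi-universal sheaf), completing the identification of Fourier--Mukai kernels.
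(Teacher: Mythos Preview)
Your proposal is correct and follows essentially the same route as the paper: identify $i\circ i^*$ as a Fourier--Mukai functor with kernel $\mathrm{pr}(\OO_\Delta)$, show that $\mathrm{pr}(\OO_y)$ agrees with the projection of a non-CM twisted cubic with embedded point $y$, invoke Theorem~\ref{thm:maintwistedcubic} for stability, and then use the universal property of the moduli space. The paper carries out the ``direct homological computation'' option you mention, via the short exact sequence $0\to\II_{C/Y}(2)\to\II_{C_0/Y}(2)\to\OO_y\to 0$ (with $C_0$ the underlying plane cubic) together with an explicit resolution of $\II_{C_0/Y}(2)$ showing $i^*(\II_{C_0/Y}(2))=0$. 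One small correction: the outcome is $\mathrm{pr}(\OO_y)\cong F_C'[1]$ rather than $F_C'$, so the Mukai vector is $-(2\lambda_1+\lambda_2)$; this is harmless for the moduli-theoretic conclusion but worth getting right.
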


\noindent \textbf{Related works.} The hyperk\"ahler structure on the Fano variety $F_Y$ was firstly observed in \cite{Beauville-cubic4fold}, by a deformation argument. Later in \cite{Kuznetsov2009Symplectic}, another construction was provided using Atiyah classes.

In the case of twisted cubics, the variety $M_Y$ appeared for the first time in the beautiful work \cite{LLSvS}. Their strategy relies on a detailed analysis of the singularities and the determinantal representations of the twisted cubics and the cubic surfaces in $Y$. One feature of our approach is that it only involves homological properties of twisted cubic curves; this simplifies a lot the argument. 

In \cite{LLMS} the authors gave an interpretation of \cite{LLSvS}'s geometric picture in the categorical setting. In particular, they described $M_Y'$ and $M_Y$ as components of moduli spaces of Gieseker stable sheaves on $Y$. For very general cubic fourfolds, they also realized the contraction from $M_Y'$ to $M_Y$ via wall-crossing in tilt-stability.

We point out that Theorem \ref{thm:Fano_main} and Theorem \ref{thm:maintwistedcubic} were proved for very general cubic fourfolds in \cite[Appendix]{BLMS:kuzcomponent} and \cite{LLMS}, respectively. In this situation, the algebraic Mukai lattice of $\Ku(Y)$ is exactly the $A_2$ lattice. This property rules out most of the potential walls, allowing to prove the theorems without going through the construction of the stability conditions. It was made clear in \cite{AddingtonNicolas2014} and \cite{LLMS} that for each twisted cubic $C$, the object $F_C'$ is the correct one to consider.

\noindent \textbf{Update.} Section \ref{sec:hilb} was added in a version of this paper submitted in July 2019. Shortly after that, we learnt that this was proved independently in \cite{BBMP} by a similar method. In \cite{AG} the authors give an independent proof of Proposition \ref{prop:period$M_Y$} using classical techniques. They also show the analogous statement of Proposition \ref{prop:eightfoldvsHilb}.\\
\\
\textbf{Plan of the paper.} In Section \ref{section:Kuznetsov component and stability conditions} we recall the definition of (weak) stability conditions on triangulated categories and the construction of Bridgeland stability conditions on $\Ku(Y)$ in \cite{BLMS:kuzcomponent}. Roughly speaking, they are obtained by tilting a second time the weak stability conditions $\sigma_{\alpha,-1}$ and, then, restricting to $\Ku(Y)$. Finally, we introduce the objects associated to twisted cubics, whose stability is studied in this context. Section \ref{section:Wall crossing and stability} is the main part of the paper. Firstly, we compute walls and the Chern character up to degree two of possible destabilizing objects with respect to $\sigma_{\alpha,-1}$. Secondly, we prove that the first wall can be crossed by preserving stability in the aCM case, while for non CM curves we need to consider the projection of these objects in $\Ku(Y)$ (Proposition \ref{3/4}). In fact, their projection remains stable after every wall, as we show in Section \ref{section:1/4}. Finally, in Section \ref{sec:twisted_final} we prove Theorem \ref{thm:maintwistedcubic}. Section \ref{section:lines} is devoted to the proof of Theorem \ref{thm:Fano_main} and in Section \ref{section:applications} we discuss some applications. 
\\

\noindent \textbf{Acknowledgements.} We are very grateful to Arend Bayer, Mart\'i Lahoz, Emanuele Macr\`i and Paolo Stellari for many useful discussions and conversations.

The first named author is a Leverhulme Early Career Fellow and would like to acknowledge the Leverhulme Trust for the support. The second named author would like to acknowledge the National Group of Algebraic and Geometric Structures and their Applications (GNSAGA-INdAM) for financial support. The third named author was partially supported by the NSF FRG grant DMS-1664215 (PI: Macr\`i).

Part of this paper was written when the third author was visiting the Department of Mathematics of University of Edinburgh, when the second and the third author were visiting Institut Henri Poincar\'e and when the first and the second author were visiting the Department of Mathematics of Northeastern University. It is a pleasure to thank these institutions for their hospitality.\\

\section{Kuznetsov component and stability conditions}
\label{section:Kuznetsov component and stability conditions}

In this section we introduce some notations and results we will use in the rest of the paper. Firstly, we recall some basic definitions about (weak) stability conditions and the construction of stability conditions on the Kuznetsov component of a cubic fourfold, introduced in \cite{BLMS:kuzcomponent}. In particular, we show that these stability conditions do not depend on the line fixed at the very beginning of the construction in \cite{BLMS:kuzcomponent} (see Proposition \ref{prop:change_line}). Finally, we define the objects $F_C'$ associated to twisted cubics which we will study in this work.

\subsection{(Weak) stability conditions}

In this section we briefly recall the definition of (weak) stability conditions for a $\C$-linear triangulated category $\TT$, following the summary in \cite[Section 2]{BLMS:kuzcomponent}. Essentially, a (weak) stability condition is the data of the heart of a bounded t-structure and of a (weak) stability function, satisfying certain conditions.

\begin{defn}
The \emph{heart of a bounded t-structure} is a full subcategory $\AA$ of $\TT$ such that \\
i) for $E$, $F$ in $\AA$ and $n <0$, we have $\Hom(E,F[n])=0$, and \\
ii) for every $E$ in $\TT$, there exists a sequence of morphisms
$$0=E_0 \xrightarrow{\phi_1} E_1 \xrightarrow{\phi_2} \dots \xrightarrow{\phi_{m-1}} E_{m-1} \xrightarrow{\phi_m} E_m=E$$
such that the cone of $\phi_i$ is of the form $A_i[k_i]$, for some sequence $k_1 > k_2 > \dots > k_m$ of integers and $A_i$ in $\AA$. 
\end{defn}

Recall that the heart of a bounded t-structure is an abelian category by \cite{BBD}. 

\begin{defn}
Let $\AA$ be an abelian category. A group homomorphism $Z: K(A) \rightarrow \C$ is a
\emph{weak stability function} (resp.\ a \emph{stability function}) on $\AA$ if, for $E \in \AA$, we have $\Im Z(E) \geq 0$, and in the case that $\Im Z(E) = 0$, we have $\Re Z(E)\leq 0$ (resp.\ $\Re Z(E) < 0$ when $E \neq 0$).
\end{defn}

We denote by $K(\TT)$ the numerical Grothendieck group of $\TT$. Let $\Lambda$ be a finite rank lattice with a surjective homomorphism $v: K(\TT) \twoheadrightarrow \Lambda$.

\begin{defn}
A \emph{weak stability condition} on $\TT$ is the data of a pair $\sigma=(\AA,Z)$, where $\AA$ is the heart of a bounded t-structure on $\TT$ and $Z$ is a weak stability function, satisfying the following properties:\\
i) The composition $K(\AA)=K(\TT) \xrightarrow{v} \Lambda \xrightarrow{Z} \C$ is a weak stability function on $\AA$. We will write $Z(-)$ instead of $Z(v(-))$ for simplicity.

For any $E \in \AA$, the slope with respect to $Z$ is given by
$$\mu_{\sigma}(E)= 
\begin{cases}
-\frac{\Re Z(E)}{\Im Z(E)} & \text{if } \Im Z(E) > 0 \\
+ \infty & \text{otherwise.}
\end{cases}$$
An object $E \in \AA$ is $\sigma$-semistable (resp.\ $\sigma$-stable) if for every proper subobject $F$ of $E$, we have $\mu_{\sigma}(F) \leq \mu_{\sigma}(E)$ (resp.\ $\mu_{\sigma}(F) < \mu_{\sigma}(E/F))$. \\
ii) Any object of $\AA$ has a Harder-Narasimhan filtration with $\sigma$-semistable factors.\\
iii) (Support property) There exists a quadratic form $Q$ on $\Lambda \otimes \R$ such that the restriction of $Q$ to $\ker Z$ is negative definite and $Q(E) \geq 0$ for all $\sigma$-semistable objects $E$ in $\AA$.

In addition, if $Z$ is a stability function, then $\sigma$ is a \emph{Bridgeland stability condition}.
\end{defn}

\subsection{Construction of stability conditions}
\label{section:constructstabonku}

Let $Y$ be a smooth cubic fourfold. The bounded derived category of coherent sheaves on $Y$ admits a semiorthogonal decomposition of the form 
\[\Db(Y)=\langle\Ku(Y),\OO_Y,\OO_Y(H),\OO_Y(2H)\rangle,\]
where $H$ is a hyperplane in $Y$ (see \cite[Corollary 2.6]{Kuz:fourfold}). In this section, we briefly recall the construction of Bridgeland stability conditions on $\Ku(Y)$ introduced in \cite{BLMS:kuzcomponent}. 

The algebraic Mukai lattice of $\Ku(Y)$ was introduced in \cite[Proposition and Definition 9.5]{BLMS:kuzcomponent}. Roughly speaking, it consists of algebraic cohomology classes of $Y$ which are orthogonal to the classes of $\OO_Y$, $\OO_Y(H)$, $\OO_Y(2H)$ with respect to the Euler pairing. This lattice always contains two special classes
\[\lambda_1=[\text{pr}(\OO_L(H))]\;\;\;\;\text{and}\;\;\;\;\lambda_2=[\text{pr}(\OO_L(2H))],\]
where $L$ is a line on $Y$ and $\text{pr}:\Db(Y)\to \Ku(Y)$ is the natural projection functor.

The key idea for the construction of stability conditions on $\Ku(Y)$ is to embed the Kuznetsov component into a ``three dimensional" category, where it is easier to define weak stability conditions by tilting. More concretely, let us fix a line $L \subset Y$ which is not on a plane in $Y$, and we denote by
$$\sigma: \tilde{Y} \rightarrow Y$$
the blow-up of $L$ in $Y$. The projection from $L$ to a disjoint $\P^3$ equips $\tilde{Y}$ with a natural conic fibration structure
$$\pi:\tilde Y \to \P^3.$$
In particular, we have an associated sheaf of Clifford algebras over $\P^3$, whose even part (resp.\ odd part) is denoted by $\BB_0$ (resp.\ $\BB_1$). Let $h$ be the hyperplane class on $\P^3$ and we use the same notation for its pullback to $\tilde{Y}$. We consider the $\BB_0$-bimodules 
$$\BB_{2j}:= \BB_0 \otimes \OO_{\P^3}(jh) \quad \text{and} \quad  \BB_{2j+1}:= \BB_1 \otimes \OO_{\P^3}(jh) \quad \text{for } j \in \Z.$$
By \cite[Proposition 7.7]{BLMS:kuzcomponent}, there is a semiorthogonal decomposition of the form
\begin{equation}
\label{sodP3B0}
\Db(\P^3,\BB_0)=\langle \Psi(\sigma^*\Ku(Y)), \BB_1, \BB_2, \BB_3 \rangle,
\end{equation}
where $\Psi: \Db(\tilde{Y}) \to \Db(\P^3,\BB_0)$ is a fully faithful functor defined by
$$\Psi(-)=\pi_*(- \otimes \OO_{\tilde Y}(h) \otimes \EE[1]).$$
Here $\EE$ is a sheaf of right $\pi^*\BB_0$-modules on $\tilde Y$,  constructed in \cite[Section 7]{BLMS:kuzcomponent}. Use $\text{Forg}: \Db(\P^3,\BB_0) \to \Db(\P^3)$ to denote the forgetful functor, it is known that Forg$(\EE)$ is a vector bundle of rank $2$.

Now the first step is to construct weak stability conditions on the derived category $\Db(\P^3,\BB_0):=\Db(\Coh(\P^3,\BB_0))$, where $\Coh(\P^3,\BB_0)$ is the category of coherent sheaves on $\P^3$ with a right $\BB_0$-modules structure. We remark that the Serre functor on $\Db(\P^3,\BB_0)$ is
$$S(-)=(-) \otimes_{\BB_0} \BB_{-3}[3],$$
as shown in \cite{BLMS:kuzcomponent}.

It turns out that, in order to obtain a suitable Bogomolov inequality for $\Db(\P^3,\BB_0)$, it is necessary to modify the usual Chern character. More precisely, for $\FF\in \Db(\P^3,\BB_0)$, the modified Chern character is defined as
\[\ch_{\BB_0}(\FF)=\ch(\text{Forg}(\FF))(1-\frac{11}{32}l),\]
where $l$ denotes the class of a line in $\P^3$. Moreover, the twisted Chern character is given by 
\[\ch_{\BB_0}^\beta=e^{-\beta h}\ch_{\BB_0}=(\rk,\ch_{\BB_0,1}-\rk \beta h,\ch_{\BB_0,2}-\beta h \cdot \ch_{\BB_0,1}+\rk \frac{\beta^2}{2} h^2,\dots).\]
In the next, we will identify the Chern characters on $\P^3$ with rational numbers. 

One useful property of $\ch_{\BB_0}$ is that its image lattice is spanned by the modified Chern characters of $\lambda_1$, $\lambda_2$ and $\ch_{\BB_0,\leq 2}(\BB_i)$ for $i=1,2,3$. See the proof of \cite[Proposition 9.10]{BLMS:kuzcomponent} for details.

We denote by $\mathrm{Coh}^\beta(\P^3, \BB_0)$ the heart of a bounded t-structure obtained by tilting $\mathrm{Coh}(\P^3, \BB_0)$ with respect to the slope stability at slope $\beta$. Furthermore, the discriminant can be defined as
$$\dlt(\FF)=(\ch_{\BB_0,1}(\FF))^2-2\rk(\FF)\ch_{\BB_0,2}(\FF) = (\ch^{\beta}_{\BB_0,1}(\FF))^2-2\rk(\FF)\ch^{\beta}_{\BB_0,2}(\FF).$$
Having these notations, we can state the following result.

\begin{prop}[\cite{BLMS:kuzcomponent}, Proposition 9.3]\label{prop:constructstabcond}
Given $\alpha>0$ and $\beta\in \R$, the pair $\sigma_{\alpha,\beta}=(\mathrm{Coh}^\beta(\P^3, \BB_0), \zab)$ with
$$\zab(\FF)= i\ch^{\beta}_{\BB_0,1}(\FF)+\frac{1}2\alpha^2\ch^{\beta}_{\BB_0,0}(\FF)-\ch^{\beta}_{\BB_0,2}(\FF)$$
defines a weak stability condition on $\Db(\P^3,\BB_0)$. The quadratic form can be given by the discriminant $\dlt$. In particular, for a $\sigma_{\alpha,\beta}$-semistable object $\FF$, we have
\[\dlt(\FF)\geq 0.\]
\end{prop}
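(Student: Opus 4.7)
The plan is to follow the standard tilt-stability recipe of Bridgeland and Bayer--Macrì, adapted to the sheaf-of-algebras setting of $\Coh(\P^3,\BB_0)$. Everything ultimately rests on one non-trivial input: the classical Bogomolov--Gieseker inequality
\[\dlt(\FF)\geq 0\]
for any torsion-free slope-semistable $\BB_0$-module $\FF$, with respect to the slope $\mu(\FF)=\ch_{\BB_0,1}(\FF)/\rk(\FF)$. I would first verify this on the natural generators $\BB_1,\BB_2,\BB_3$ by direct computation from their modified Chern characters, and then extend it to arbitrary slope-semistable $\BB_0$-sheaves via a Mehta--Ramanathan-style restriction to hyperplane sections, reducing the statement to curves where the Clifford sheaf structure can be analyzed concretely. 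The correction factor $1-\tfrac{11}{32}l$ in the definition of $\ch_{\BB_0}$ is tuned precisely so that this inequality becomes sharp on these building blocks.

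Granting the inequality, the rest is formal. For fixed $\beta\in\R$, slope HN filtrations on $\Coh(\P^3,\BB_0)$ produce the torsion pair $(\TT_\beta,\FF_\beta)$ with $\TT_\beta$ generated by torsion sheaves and slope-semistable torsion-free sheaves of slope $>\beta$, and $\FF_\beta$ by slope-semistable torsion-free sheaves of slope $\leq \beta$. Tilting yields the heart $\Coh^\beta(\P^3,\BB_0)=\langle\FF_\beta[1],\TT_\beta\rangle$. A direct check shows $\Imm\zab(E)\geq 0$ for any $E$ in this heart, and the boundary case $\Imm\zab(E)=0$ splits into two subcases: codimension-$\geq 2$ torsion in $\TT_\beta$, where $\Real\zab=-\ch^\beta_{\BB_0,2}\leq 0$ because such sheaves have non-negative degree-$2$ modified character; and shifts $A[1]$ of slope-semistable $A\in\FF_\beta$ with $\mu(A)=\beta$, where the Bogomolov inequality $\dlt(A)\geq 0$ forces $\ch^\beta_{\BB_0,2}(A)\leq 0$ and hence $\Real\zab(A[1])\leq 0$. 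Thus $\zab$ is a weak stability function on the tilt.

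For the Harder--Narasimhan property of $\sab$, I would apply the standard abstract criterion for weak stability functions on noetherian hearts with discrete image in $\C$. Noetherianity of $\Coh^\beta(\P^3,\BB_0)$ follows from noetherianity of $\Coh(\P^3,\BB_0)$ by the usual tilting argument, and discreteness follows because $\ch_{\BB_0,\leq 2}$ lands in the finitely-generated lattice spanned by the modified Chern characters of $\lambda_1$, $\lambda_2$, $\BB_1$, $\BB_2$, $\BB_3$. For the support property with quadratic form $\dlt$: an explicit computation shows that $\dlt$ is negative definite on $\Ker\zab$ inside the image lattice. The stronger assertion $\dlt(E)\geq 0$ on every $\sab$-semistable $E$ is then obtained by a deformation argument: one varies $(\alpha,\beta)$ along a path preserving semistability into the large-$\alpha$ regime, where $\sab$-semistability forces $E$ to degenerate to a slope-semistable sheaf (or its shift), and the classical Bogomolov inequality applies.

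The main obstacle is the very first step: the Bogomolov--Gieseker inequality $\dlt\geq 0$ for slope-semistable $\BB_0$-sheaves with respect to the modified Chern character. The precise correction $1-\tfrac{11}{32}l$ is delicate --- it has to be compatible simultaneously with the $\BB_i$ and with $\EE$, and no a priori formula suggests it. Once this technical input is established, every subsequent verification is a direct translation of the by-now standard tilt-stability construction on a smooth projective threefold.
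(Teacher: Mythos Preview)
The paper does not prove this proposition at all: it is quoted verbatim from \cite[Proposition 9.3]{BLMS:kuzcomponent}, and the only comment added is the remark that the final inequality $\dlt(\FF)\geq 0$ for $\sab$-semistable objects follows from \cite[Theorem 8.3]{BLMS:kuzcomponent} by the standard wall-crossing argument of \cite[Section 3]{BMS:stabCY3s}. Your outline of the tilting construction, the verification that $\zab$ is a weak stability function on $\Coh^\beta(\P^3,\BB_0)$, the HN property via noetherianity and discreteness, and the deformation argument for the support property are all correct and match the standard route taken in \cite{BLMS:kuzcomponent}.

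There is, however, a genuine gap in how you propose to establish the key input, the Bogomolov inequality $\dlt(\FF)\geq 0$ for slope-semistable $\BB_0$-modules. A Mehta--Ramanathan restriction to curves cannot prove this: restriction to a curve annihilates $\ch_2$, so the discriminant becomes vacuous. Even restricting to a general surface (where a Bogomolov inequality is available) is problematic, because the correction class $\tfrac{11}{32}l$ lives in codimension two and does not restrict naturally; the modified Chern character $\ch_{\BB_0}$ is not designed to be compatible with hyperplane restriction. The actual argument in \cite[Theorem 8.3]{BLMS:kuzcomponent} proceeds instead through the forgetful functor: for a slope-semistable $\BB_0$-module $\FF$ one analyses the ordinary Harder--Narasimhan filtration of $\Forg(\FF)$ on $\P^3$, bounds the spread of the slopes of its factors using the $\BB_0$-action, and combines this bound with the classical Bogomolov inequality on $\P^3$ to obtain the inequality for the modified discriminant. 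The constant $\tfrac{11}{32}$ is what makes this comparison sharp, not a restriction argument.
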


\begin{rmk}
We observe that the last part of Proposition \ref{prop:constructstabcond} follows easily from \cite[Theorem 8.3]{BLMS:kuzcomponent}, arguing as in \cite[Section 3]{BMS:stabCY3s}.
\end{rmk}

We recall that when $\ch^{\beta}_{\BB_0,1}(\mathcal F)\neq 0$ the slope of $\mathcal F$ associated to $\sigma_{\alpha,\beta}$ is defined as
$$\mab(\FF) = \frac{-\Re(\zab(\FF))}{\Im(\zab(\FF))}=\frac{\ch_{\BB_0,2}(\FF)-\frac{1}2(\alpha^2+\beta^2)\rk(\FF)}{\ch_{\BB_0,1}(\FF)-\beta\rk(\FF)}-\beta.$$

The second step is to induce stability conditions on $\Ku(Y)$ from the weak stability conditions on $\Db(\P^3,\BB_0)$. We only sketch this part as details will not be used. We fix $\alpha<\frac{1}{4}$ and $\beta=-1$, and we consider the tilting of $\mathrm{Coh}^{-1}(\P^3, \BB_0)$ with respect to $\mab=0$. This new heart is denoted by $\mathrm{Coh}^0_{\alpha, -1}(\P^3, \BB_0)$. Note that $\Ku(Y)$ embeds into $\Db(\P^3,\BB_0)$. As shown in \cite{BLMS:kuzcomponent}, Section 9, the pair
\begin{equation}
\label{stabcond}
\sigma_\alpha=(\mathrm{Coh}^0_{\alpha, -1}(\P^3, \BB_0)\cap \Ku(Y),-iZ_{\alpha,-1})
\end{equation}
defines a Bridgeland stability condition on $\Ku(Y)$.

One subtle issue is that the Clifford structure and the embedding of $\Ku(Y)$ in $\Db(\P^3,\BB_0)$ depend on the choice of the line $L$ to blow up. However, for the induced stability conditions on the Kuznetsov component, we are able to prove the following result.

\begin{prop}
\label{prop:change_line}
For a fixed $\alpha >0$, the induced stability condition $\sigma_\alpha$ defined in \eqref{stabcond} is independent of the choice of $L$.
\end{prop}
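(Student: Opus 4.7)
The plan is to decouple the proposition into two independent claims: first, that the central charge $-iZ_{\alpha,-1}$, restricted to $\Ku(Y)$ via the embedding induced by $L$, is independent of $L$ as a function on $K(\Ku(Y))$; and second, that the heart $\mathrm{Coh}^0_{\alpha,-1}(\P^3,\BB_0)\cap\Ku(Y)$, pulled back to $\Ku(Y)$, is independent of $L$. Since a Bridgeland stability condition on a fixed triangulated category is uniquely determined by its heart and its central charge, the proposition follows from the two claims.

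The first claim is essentially a Riemann--Roch computation. One first checks that the classes $\lambda_1,\lambda_2\in K(\Ku(Y))$ are themselves intrinsic to $Y$: any two lines on $Y$ are deformation-equivalent through the connected variety $F_Y$, hence have the same numerical class on $Y$, and since the projection functor $\mathrm{pr}$ descends to K-theory, $[\mathrm{pr}(\OO_L(iH))]$ does not depend on $L$. Combining Grothendieck--Riemann--Roch along $\sigma$ and $\pi$ with the explicit description of $\EE$ in \cite[Section 7]{BLMS:kuzcomponent} and the adjointness built into the semiorthogonal decomposition \eqref{sodP3B0}, one can express $\ch^{-1}_{\BB_0,\leq 2}(\Psi(\sigma^*E))$ as a linear combination of the Euler pairings $\chi(\OO_Y(jH),E)$ on $Y$ for $j=0,1,2$. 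These pairings are intrinsic to $\Ku(Y)$, so $Z_{\alpha,-1}|_{\Ku(Y)}$ is as well.

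For the second claim I would use a connectedness argument. Let
\[c:F_Y\longrightarrow \Stab(\Ku(Y)),\qquad L\longmapsto \sigma_\alpha^L,\]
where $\sigma_\alpha^L$ is the stability condition \eqref{stabcond} associated to $L$. By the first claim, the composition $Z\circ c$ with the forgetful map $Z:\Stab(\Ku(Y))\to\Hom(\Lambda,\C)$ is constant. The support property implies that $Z$ is a local homeomorphism, so once we know that $c$ is continuous, $c$ is locally constant, and the connectedness of $F_Y$ (which is a smooth projective hyperk\"ahler fourfold) forces $c$ to be constant. To establish continuity of $c$, one runs the BLMS construction in families over $F_Y$: blow up the universal line $\mathcal{L}\hookrightarrow Y\times F_Y$ to obtain a relative conic fibration over $\P^3_{F_Y}$, construct the sheaf $\BB_0$ of Clifford algebras and the embedding $\Psi$ relatively, and perform the two tilts fiberwise.

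The hard part will be checking that the two tilting operations, together with the weak stability conditions $\sigma_{\alpha,-1}$ on each $\Db(\P^3,\BB_0^L)$, glue into a continuous family on $F_Y$ so that $c$ lands continuously in $\Stab(\Ku(Y))$. This is the one genuinely technical ingredient of the plan; the reduction to central charge invariance plus a local homeomorphism argument is otherwise purely formal.
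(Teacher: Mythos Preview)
Your overall architecture matches the paper's: split into (i) independence of the central charge on $K(\Ku(Y))$ and (ii) independence of the heart, and for (ii) use that the $\sigma_\alpha^L$ assemble into a family over the connected variety $F_Y$. The paper likewise asserts that the central charge factors through a map intrinsic to $Y$ and then appeals to \cite{liurendapaper} for the family statement, so the technical input you flag as ``the one genuinely technical ingredient'' is exactly the black box the paper imports as well.

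The one noteworthy difference is in how constancy of the heart is extracted from the family. You invoke Bridgeland's local homeomorphism $\Stab(\Ku(Y))\to\Hom(\Lambda,\C)$: since $Z\circ c$ is constant and $c$ is continuous, $c$ is locally constant and hence constant. The paper instead gives a direct argument at the level of semistable objects, using only \emph{openness of semistability} rather than full continuity of $c$: if $\FF$ were $\sigma_{L_0}$-semistable but not $\sigma_{L_1}$-semistable, take the first HN factor $\FF_1$ of $\FF$ with respect to $\sigma_{L_1}$; openness gives a common line $L$ where both $\FF$ and $\FF_1$ are semistable, and since the central charge is $L$-independent the phase inequality $\phi(\FF_1)>\phi(\FF)$ persists, contradicting $\Hom(\FF_1,\FF)\neq 0$. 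This is marginally more elementary---it avoids checking continuity in the Bridgeland metric and only needs that, for each fixed object, the locus of $L$ where it is semistable is open---but your local-homeomorphism argument is equally valid and arguably cleaner once the family is in hand.
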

\begin{proof}
For simplicity, we denote the stability condition by the pair
\[\sigma_L=(\AA_L,Z_L).\]
The central charge $Z_L$ factors via $\ch^{\beta}_{\BB_0}$, which is independent of the choice of $L$. We need to show that the heart $\AA_L$ is constant.

Let $F_Y$ be the Fano variety of lines on $Y$. It is shown in \cite[Proposition 30.4]{liurendapaper} that $\sigma_L$ is a family of stability conditions over $F_Y$, satisfying the openness of heart property. In particular, if an object $\FF$ is $\sigma_{L_0}$-semistable for a line $L_0\in F_Y$, then there exists an open set $U_0\subset F_Y$, such that $\FF$ is $\sigma_{L}$-semistable for any line $L\in U_0$.

Now we show that in our case, this implies that $\FF$ is $\sigma_{L}$-semistable for any $L\in F_Y$. If not, assume that there exists a line $L_1$ such that $\FF$ is not $\sigma_{L_1}$-semistable. Then we consider the Harder-Narasimhan filtration of $\FF$ with respect to the slicing of $\sigma_{L_1}$:
\[\FF_1\subset\FF_2\subset...\subset\FF_n=\FF.\]
By our assumption, $\FF_1$ is $\sigma_{L_1}$-semistable, and its phase satisfies $\phi(\FF_1)>\phi(\FF)$.

Using the openness of heart property again, we know that there exists an open set $U_1\subset F_Y$, such that for any $L\in U_1$, $\FF_1$ is $\sigma_{L}$-semistable. In particular, if we take a line $L\in U_0\cap U_1$, then $\FF$ and $\FF_1$ are both $\sigma_{L}$-semistable. Since the central charge is independent of $L$, we still have $\phi(\FF_1)>\phi(\FF)$. On the other hand, by our construction there is a non-trivial morphism $\FF_1\to \FF$, giving a contradiction. This concludes the proof of the statement.
\end{proof}

\subsection{Twisted cubics and objects}
\label{section:twiscubobj}

Let $Y$ be a smooth cubic fourfold not containing a plane. As in \cite{LLMS}, given a twisted cubic curve $C$ contained in a cubic surface $S \subset Y$, we denote by $F_C$ the kernel of the evaluation map
$$\mathrm{H}^0(Y,\II_{C/S}(2H)) \otimes \OO_Y \onto \II_{C/S}(2H),$$
where $\II_{C/S}$ is the ideal sheaf of $C$ in $S$. Let $F_C'$ be the projection of $F_C$ in the Kuznetsov category $\Ku(Y)$. Explicitly, as the projection is the composition of the mutations $\R_{\OO_Y(-H)}\L_{\OO_Y}\L_{\OO_Y(H)}$ (see for example \cite{BLMS:kuzcomponent}, Section 3 for the definitions of mutation functors), it is possible to compute that
$$F_C':= \R_{\OO_Y(-H)}F_C.$$
We recall that by \cite[Lemma 2.3]{LLMS}, if $C$ is an aCM twisted cubic curve, then $F_C$ is in $\Ku(Y)$; in this case, $F_C$ and $F_C'$ are identified. If $C$ is a non CM curve, by the definition of $F_C'$, we have the triangle
$$F_C' \rightarrow F_C \rightarrow \OO_Y(-H)[1] \oplus \OO_Y(-H)[2].$$

Using the notations introduced in the previous section, we set 
$$E_C:=\Psi(\sigma^*F_C) \quad \text{and} \quad E_C':=\Psi(\sigma^*F_C');$$
by \eqref{sodP3B0} we have that $E_C'$ is in $\langle \BB_1, \BB_2, \BB_3 \rangle^{\perp}$. 
Applying $\sigma^*$ and $\Psi$, for non CM curve $C$, we get the triangle
\begin{equation}
\label{mutE_C}
E_C' \rightarrow E_C \rightarrow \BB_{-1}[1] \oplus \BB_{-1}[2];
\end{equation}
here we have used \cite[Proposition 7.7]{BLMS:kuzcomponent}. In particular, we note that
$$\ch^{-1}_{\BB_0,\leq 2}(E_C')=\ch^{-1}_{\BB_0,\leq 2}(E_C)=\ch^{-1}_{\BB_0,\leq 2}(\Psi\sigma^*(2 \lambda_1 + \lambda_2))=(0,6,0).$$

\section{Wall-crossing and stability for twisted cubic curves}
\label{section:Wall crossing and stability}

The aim of this section is to prove Theorem \ref{thm:maintwistedcubic}. Firstly, we compute the walls and the twisted Chern character up to degree two of possible destabilizing objects for $E_C$ with respect to $\sigma_{\alpha,-1}$. Secondly, we characterize semistable objects in the heart with negative rank and zero discriminant. This is fundamental to recover the destabilizing objects by their Chern character. In the third part, we show that, $E_C$ are stable with respect to $\sigma_{\alpha,-1}$ for $\alpha$ large (Proposition \ref{stab_alphalarge}). This gives us the starting point for wall crossing. To cross the first wall, we need to consider the projection $E_C'$ in the Kuznetsov component in order to preserve the stability. Then, we prove that $E_C'$ remains stable after the other walls. Finally, we discuss the stability after the second tilt, and we relate the moduli space which parametrizes these stable objects to the LLSvS eightfold.   

\subsection{Computation of the walls with respect to $\sigma_{\alpha,-1}$}\label{sec:potential_walls}

Having the stability of $E_C$ for $\alpha$ large from Proposition \ref{stab_alphalarge}, we are now interested in computing explicitly the walls where the object could potentially become strictly semistable. In this section, we list the character $\ch^{-1}_{\BB_0,\leq 2}$ of all possible destabilizing objects of $E_C$ and $E_C'$ with respect to the weak stability conditions $\sigma_{\alpha,-1}$. 

We recall that by \cite[Remark 8.4]{BLMS:kuzcomponent}, the rank of $\BB_0$-modules on $\P^3$ is always a multiple of $4$. Thus, we write the characters of the destabilizing subobjects and quotient objects as
\begin{equation}
\label{chF}
(0,6,0)=(4a,b,\frac{c}{8})+(-4a,6-b,-\frac{c}{8})
\end{equation}
for $a, b, c \in \Z$. These characters have to satisfy several additional conditions:
\\
i) The two characters have non-negative discriminant $\dlt$ as recalled in Proposition \ref{prop:constructstabcond}.
\\
ii) There exists $\alpha>0$ such that the two characters have the same slope with respect to $\sigma_{\alpha,-1}$.
\\
iii) The two characters should be integral combinations of the characters of $\lambda_1$ and $\lambda_2$, and $\ch^{-1}_{\BB_0,\leq 2}(\BB_i)$ for $i=1,2,3$.
\\
iv) The ordinary Chern character of objects in $\Db(\P^3)$ truncated to degree $2$ is represented by a triple $(R,C,D/2)$, where $C$ and $D$ are integers of the same parity. Thus, the two characters have the form
$$(R,C,\frac{D}{2})(1,0,-\frac{11}{32})(1,1,\frac{1}{2})=(R,C+R,\frac{D}{2}+C-\frac{5}{16}R).$$ 
These conditions reduce the possible destabilizing characters to finitely many cases, which we list below. The computation is rather elementary and we omit the details.

\begin{prop}
\label{walls}
The possible solutions of \eqref{chF} are:
\begin{enumerate}
\item for $\alpha=3/4$, $a=1$, $b=3$, $c=9$; 
\item for $\alpha=1/4$, 
\begin{enumerate}
\item $a=\pm 1$, $b=1$, $c=\pm 1$;
\item $a=\pm 2$, $b=2$, $c=\pm 2$;
\item $a=\pm 3$, $b=3$, $c=\pm 3$;
\item $a=1$, $b=3$, $c=1$;
\end{enumerate}
\item for $\alpha= 1/12$, $a=9$, $b= 3$, $c= 1$.
\end{enumerate}
\label{prop:potentialwalls}
\end{prop}

Note that the stability condition $\sigma_{\alpha}$ is constructed from $\sigma_{\alpha,-1}$ with $\alpha < 1/4$. In the rest of this section, we will study the stability of $E_C$. We will first prove that if $C$ is an aCM curve, then $E_C$ remains stable with respect to $\sigma_{\alpha,-1}$ after the first wall. On the other hand, if $C$ is non CM, then $E_C$ is destabilized. In particular, we need to consider the mutation $E_C'$ of $E_C$, which instead becomes stable. Then we prove that the second wall can be crossed without changing the stability of $E_C'$. The third wall also does not change the stability of $E_C'$; this fact can be directly proved without using specific information about the destabilizing objects.

\subsection{Stable objects of discriminant zero}

The following general lemma will be crucial in order to study the destabilizing objects by their Chern characters. The basic idea is that a stable object $E$ of zero discriminant and negative rank has to be stable with respect to any weak stability condition $\sigma_{\alpha,\beta}$. Then, comparing the slopes of $E$ and $\BB_i$ with respect to different stability conditions, we get strong restrictions on $\Hom(\BB_i,E[j])$, which can be used to show that $E=\BB_i^{\oplus n}[1]$.

\begin{lem}[Stable objects of discriminant zero]\label{lem:dlt0obj}
Let $E$ be a $\sigma_{\alpha_0,\beta_0}$-semistable object in $\Coh^{\beta_0}(\P^3,\BB_0)$ for some $\alpha_0 > 0$ and $\beta_0 \in \R$. Assume that $\dlt(E)= 0$ and $\rk(E)<0$. Then
$$E=\BB_i^{\oplus n}[1] \quad \text{ for some } i\in \Z \text{ and } n\in\N.$$
\end{lem}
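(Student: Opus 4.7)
The proof follows three main steps: propagate semistability to the entire allowable $(\alpha, \beta)$-region using $\dlt(E) = 0$, reduce $E$ to a shifted torsion-free sheaf, and finally identify that sheaf as $\BB_i^{\oplus n}$ via slope matching and $\Hom$ computations.

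\textbf{Step 1 (Propagation and sheaf reduction).} Suppose $E$ acquired a wall at some $(\alpha', \beta')$: then in the Jordan--H\"older filtration of $E$ at the wall one has a short exact sequence $0 \to A \to E \to B \to 0$ in $\Coh^{\beta'}(\P^3, \BB_0)$ with $A, E, B$ all $\sigma_{\alpha', \beta'}$-semistable of equal slope. Proposition \ref{prop:constructstabcond} then gives $\dlt(A), \dlt(B) \geq 0$, and a Hodge-index style inequality for $\dlt$ on the two-dimensional sublattice spanned by $\ch^{\beta'}_{\BB_0}(A)$ and $\ch^{\beta'}_{\BB_0}(E)$, combined with $\dlt(E) = 0$, forces $\dlt(A) = \dlt(B) = 0$ and the Chern characters of $A, E, B$ to be proportional. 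Hence no wall occurs and $E$ is $\sigma_{\alpha, \beta}$-semistable throughout the allowable region. Now write the cohomology triangle $H^{-1}(E)[1] \to E \to H^0(E)$ in $\Coh^{\beta}$; since $\rk(E) < 0$ we have $H^{-1}(E) \neq 0$. A limit argument pushing $\beta$ toward $\mu_{\BB_0}(H^{-1}(E))$ from above, in which $\mu_{\alpha, \beta}(H^0(E)) \to +\infty$ while $\mu_{\alpha, \beta}(E)$ stays bounded, forces $H^0(E) = 0$. Thus $E = F[1]$ for a torsion-free, slope-semistable $\BB_0$-module $F$ on $\P^3$ with $\dlt(F) = 0$.

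\textbf{Step 2 (Matching with a $\BB_i$).} With $\dlt(F) = 0$, the slope function $(\alpha, \beta) \mapsto \mu_{\alpha, \beta}(F[1])$ depends on $F$ only through $\mu_{\BB_0}(F)$. The objects $\BB_i$ are $\sigma_{\alpha, \beta}$-stable with $\dlt(\BB_i) = 0$, and their slopes $\mu_{\BB_0}(\BB_i)$ form a full system of possible slopes for $\dlt = 0$ objects in the lattice spanned by $\ch^{-1}_{\BB_0, \leq 2}$ modulo the rank-zero classes $\lambda_1, \lambda_2$. Consequently $\mu_{\BB_0}(F) = \mu_{\BB_0}(\BB_i)$ for some $i \in \Z$, and the slope trajectories of $F[1]$ and $\BB_i[1]$ coincide identically in $(\alpha, \beta)$.

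\textbf{Step 3 (Hom computations and conclusion).} Comparing slopes to either side of this matching line and using the semistability of both $F$ and $\BB_i$, together with Serre duality on $\Db(\P^3, \BB_0)$, yields $\Ext^k(\BB_i, F) = 0$ for $k \neq 0$. A twisted Hirzebruch--Riemann--Roch computation then gives $\dim \Hom(\BB_i, F) = \chi(\BB_i, F) = n$, where $n > 0$ is the proportionality factor $\ch^{-1}_{\BB_0}(F) = n \cdot \ch^{-1}_{\BB_0}(\BB_i)$. The evaluation morphism $\BB_i \otimes \Hom(\BB_i, F) \to F$ is then a map between slope-semistable sheaves of equal slope, rank and zero discriminant; its kernel and cokernel have $\dlt \geq 0$ with zero total rank and $\ch_1$, so they vanish, giving $F \cong \BB_i^{\oplus n}$ and hence $E \cong \BB_i^{\oplus n}[1]$.

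The main obstacle is Step 2: checking that the possible slopes of slope-semistable objects with $\dlt = 0$ are indeed exhausted by the $\mu_{\BB_0}(\BB_i)$. The twisted Riemann--Roch and Serre duality in Step 3 also need the explicit form of $\Forg(\BB_i)$ and the Serre functor of $\Db(\P^3, \BB_0)$, but once the slope-matching in Step 2 is established these are reasonably routine.
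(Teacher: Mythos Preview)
Your overall shape parallels the paper's, but Step 2—which you correctly flag as the main obstacle—contains a factual error and is missing the key idea. The classes $\lambda_1,\lambda_2$ are \emph{not} rank-zero in $\Db(\P^3,\BB_0)$: from $\chbl(\Psi\sigma^*(2\lambda_1+\lambda_2))=(0,6,0)$ and $\chbl(\Psi\sigma^*(\lambda_1+\lambda_2))=(-4,3,\tfrac{7}{8})$ one computes that $\lambda_1,\lambda_2$ map to classes of rank $4$ and $-8$ respectively, so there is no ``rank modulo $\lambda_i$'' reduction and your lattice-only formulation does not make sense as stated. The paper's slope-matching argument is entirely different and uses the Kuznetsov component in an essential way: after normalising to $\mu_E\in[-\tfrac14,\tfrac14)$ and passing to stable factors $E_i$, if $\mu_{E_i}\neq-\tfrac14$ then slope comparison with $\BB_{-1}[1],\BB_0[1],\BB_2,\BB_3$ at $\sigma_{0+,\beta}$ for suitable $\beta$ on either side of the vertical wall kills $\Hom(\BB_s,E_i[j])$ for $s=1,2,3$ and all $j$. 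Hence $E_i\in\Psi(\sigma^*\Ku(Y))$, so $\chbl(E_i)\in\langle\lambda_1,\lambda_2\rangle$; but every nonzero class in that rank-$2$ sublattice has $\dlt\geq 7$, contradicting $\dlt(E_i)=0$. This interplay with $\Ku(Y)$ is the essential ingredient you are missing.

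Step 3 also has a gap: since $F$ and $\BB_i$ have \emph{identical} slope trajectories (same $\mu$, same $\dlt=0$), slope comparison yields no inequality between them at any $(\alpha,\beta)$, so neither $\Ext^1(\BB_i,F)$ nor $\Ext^2(\BB_i,F)\cong\Ext^1(F,\BB_{i-3})^*$ vanishes for free. The paper avoids this by first reducing to \emph{stable} factors $E_i$, then using a Riemann--Roch computation $\chi(\BB_2,E_i)=0$ together with the established vanishings to deduce $\hom(\BB_0,E_i[-1])>\hom(\BB_0,E_i)\geq 0$, giving a nonzero map $\BB_0\to E_i[-1]$ between stable objects of equal slope and hence $E_i\cong\BB_0[1]$; the factors are then assembled via the rigidity $\Ext^1(\BB_0,\BB_0)=0$. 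Your Step 1 limit argument for $H^0(E)=0$ is likewise suspect: $H^0(E)$ is a \emph{quotient} of $E$ in $\Coh^\beta$, so its slope tending to $+\infty$ is compatible with, not contradicted by, the semistability of $E$. The paper does not argue this way; it instead rules out stable factors with $\chbl=(0,0,0)$ at the vertical wall by an Euler-characteristic inequality $\chi(\BB_1,-)>0$ versus $\chi(\BB_1,E)\leq 0$.
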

\begin{proof}
In order to simplify the notations, we set
$$\mu_E=\frac{\chbo(E)}{\rk(E)}.$$
As we will compare the slopes of $E$ with $\BB_i$, it is helpful to keep in mind that
$$\frac{\chbo(\BB_i)}{\rk(\BB_i)}=\frac{i}{2}-\frac{1}{4}.$$

Without loss of generality, by considering $E\otimes_{\BB_0}\BB_k$ for suitable $k\in \Z$, we may assume that
$$\mu_E\in [-\frac{1}4,\frac{1}4).$$

By choosing a stable factor of $E$, we may first assume that $E$ is actually $\sigma_{\alpha_0,\beta_0}$-stable. By \cite[Lemma 3.9]{BMS:stabCY3s}, when $\beta> \mu_E-1$, the object $E$ can become strictly semistable only when each stable factor $E_i$ satisfies $\dlt(E_i)<\dlt(E)=0$, which is not possible. Therefore, we deduce that $E$ is $\sab$-stable for $\beta> \mu_E-1$. In particular, we have that $E$ is $\sigma_{0+,\beta_1}$ stable for
$$\mu_E<\beta_1+1<\frac{1}{4},$$
where the notation $\sigma_{0+,\beta_1}$ means that it is possible to find suitable values of $\alpha>0$, realizing this relations between the slopes. We denote the slope function of this stability by $\mu_{0+,\beta_1}$.

Since $\rk(E)<0$, we have (see Figure \ref{fig:delta0obj}): 
\[\mu_{0+,\beta_1}(\BB_{-2}[1])<\mu_{0+,\beta_1}(E)<\mu_{0+,\beta_1}(\BB_1).\]

\begin{figure}[htb]
\tikzset{%
    add/.style args={#1 and #2}{
        to path={%
 ($(\tikztostart)!-#1!(\tikztotarget)$)--($(\tikztotarget)!-#2!(\tikztostart)$)%
  \tikztonodes},add/.default={.2 and .2}}
}
\begin{center}

\begin{imgresize}{1\textwidth}

\begin{tikzpicture}[domain=2:1]

\coordinate (E) at (-0.1,0.005);
\node  at (E) {$\bullet$};
\node [below] at (E) {$E$};

\coordinate (A2) at (-1.25,0.78125);
\node  at (A2) {$\bullet$};
\node [above right] at (A2) {$\mathcal B_{-2}$};
\coordinate (A1) at (-0.75,0.28125);
\node  at (A1) {$\bullet$};
\node [above right] at (A1) {$\mathcal B_{-1}$};
\coordinate (B0) at (-.25,0.03125);
\node  at (B0) {$\bullet$};
\node [above ] at (B0) {$\mathcal B_{0}$};
\coordinate (B1) at (.25,0.03125);
\node  at (B1) {$\bullet$};
\node [above] at (B1) {$\mathcal B_{1}$};
\coordinate (B3) at (1.25,0.78125);
\node  at (B3) {$\bullet$};
\node [above left] at (B3) {$\mathcal B_{3}$};
\coordinate (B2) at (0.75,0.28125);
\node  at (B2) {$\bullet$};
\node [above left] at (B2) {$\mathcal B_{2}$};

\coordinate (BT3) at (-0.5,0.127);
\coordinate (BT3s) at (-0.54,0.127);
\node  at (BT3) {$\bullet$};
\draw[->] (-1,0.15) node [left]  {$\Ker Z_{0+,\beta_3}$} -- (BT3s) ;
\coordinate (BT2) at (-0.2,0.021);
\coordinate (BT2s) at (-0.2,-0.02);
\node  at (BT2) {$\bullet$};
\draw[->] (-0.25,-0.15) node [left]  {$\Ker Z_{0+,\beta'_1}$}--(BT2s) ;
\coordinate (BT1) at (0.03,0.00045);
\coordinate (BT1s) at (0.04,-0.04);
\node  at (BT1) {$\bullet$};
\draw[->] (0.1,-0.25) node [below]  {$\Ker Z_{0+,\beta_1}$} -- (BT1s);

\draw[dashed] (B3) -- (1.25,0) node[below] {$\frac{5}4$};

\draw [dashed] (E) to (BT1);
\draw [add =-1 and 6,thick] (E) to (BT1) node [below]{$\mu_{0+,\beta_1}(E)$};

\draw [dashed] (B1) to (BT1);
\draw [add =-1 and 2.8,thick] (BT1) to (B1) node [right]{$\mu_{0+,\beta_1}(\BB_1)$};

\draw [dashed] (A2) to (BT1);
\draw [add =-1 and 0.7,thick] (A2) to (BT1) node [below]{slope$\sim\mu_{0+,\beta_1}(\BB_{-2}[1])$};

\draw[->] [opacity=1] (-1.5,0) -- (,0) node[above right] {}-- (1.5,0) node[below, opacity =1] {$\frac{\ch^{-1}_{\BB_0,1}}{\rk}$};

\draw[->][opacity=1] (0,-0.5)-- (0,0) node [below left] {} --  (0,1) node[above, opacity=1] {$\frac{\ch^{-1}_{\BB_0,2}}{\rk}$};

\draw [thick](-1.3,0.845) parabola bend (0,0) (1.3,0.845) node [above right, opacity =1] {$\Delta_{\BB_0}=0$};
\end{tikzpicture}

\end{imgresize}

\end{center}
\caption{Comparing slopes with respect to $\mu_{0+,\beta_1}$.} \label{fig:delta0obj}
\end{figure}

By comparing the slope and applying Serre duality, it follows that
\[\Hom(\BB_1,E[j])=0\]
for $j\neq 1$. Therefore, $\chi(\BB_1,E)\leq 0$.\\

Now we study the vertical wall. Suppose that $E$ is strictly semistable when $\beta_2=\mu_E-1$. Then each stable factor $E_i$ satisfies one of the two conditions:
$$\rk(E_i)<0 \text{  or  } \chbl(E_i)=(0,0,0).$$
We study these two cases separately. Given a stable factor $E_i$ with negative rank, by \cite[Lemma 3.9]{BMS:stabCY3s} we have that $E_i[-1]$ is in the heart $\Coh^{\beta}(\P^3,\BB_0)$ and it is $\sab$-stable for any $\beta+1<\mu_E$. In particular, $E_i[-1]$ is $\sigma_{0+,\beta_3}$-stable for
$$-\frac{3}{4}<\beta_3+1<\mu_E.$$
Since $\rk(E_i)< 0$, we can compute (see also Figure \ref{fig:delta0obj}\footnote{Instead of computing the $\mu_{0+,\beta_1}$ for each object explicitly, one may also compare the slopes of them by visualizing them on the figure. A point above the parabola represents the kernel of the central charge, while a point below the parabola represents a stable character. We refer to \cite[Section 1.5]{Chunyi-Xiaolei:birational} for details of this setup.}):
\[\mu_{0+,\beta_3}(\BB_{-2}[1])<\mu_{0+,\beta_3}(E_i[-1])<\mu_{0+,\beta_3}(\BB_{1}).\]
As a consequence, we get
$$\Hom (\BB_1,E_i[1])=\Hom(E_i[-1],\BB_{-2}[1])^*=0.$$
Since $E_i$ is also $\sab$-stable for $\beta> \mu_E-1$ (by the same argument used for $E$), we deduce that $\Hom(\BB_1,E_i[j])=0$ for any $j\in \Z$, in other words, $E_i\in \BB_1^\perp$. In particular, $\chi(\BB_1,E_i)=0$. 

In the second case, we show that such a torsion stable factor cannot exist. Assume that $E_i$ is a stable factor with $\chbl(E_i)=(0,0,0)$; note that
$$\Hom_{\BB_0}(\BB_1,E_i[j])=\Hom_{\OO_{\P^3}}(\OO_{\P^3},\Forg(E_i\otimes_{\BB_0}\BB_{-1})[j])=0$$
if and only if $j\neq 0$. This implies that $\chi(\BB_1,E_i)>0$. Since $\chi(\BB_1,E_i)$ is also non positive by the previous computation, we conclude that $E_i$ has to be zero. Hence, we may assume that each stable factor $E_i$ satisfies $\rk(E_i)<0$.\\

Now we want to show that $\chbl(E_i[-1])=c\chbl(\BB_0)$ for some positive integer $c$. It suffices to show that $\frac{\chbo(E_i)}{\rk(E_i)}= -\frac{1}4$. Assume not, we may consider the tilt stability condition $\sigma_{0+,\beta'_1}$ for some
$$\frac{\chbo(\BB_{0})}{\rk(\BB_{0})}<\beta'_1+1<\frac{\chbo(E_i)}{\rk(E_i)}.$$
In this case, we have
\begin{align*}
\mu_{0+,\beta'_1}(\BB_{-1}[1])<\mu_{0+,\beta'_1}(\BB_{0}[1])<\mu_{0+,\beta'_1}(E_i[-1])<\mu_{0+,\beta'_1}(\BB_{2})<\mu_{0+,\beta'_1}(\BB_{3})
\end{align*}
and
\begin{align*}
\mu_{0+,\beta_1}(\BB_{-1}[1])<\mu_{0+,\beta_1}(\BB_{0}[1])<\mu_{0+,\beta_1}(E_i)<\mu_{0+,\beta_1}(\BB_2)<\mu_{0+,\beta_1}(\BB_3).
\end{align*}
Hence
$$\Hom(\BB_2,E_i[j])=\Hom(\BB_3,E_i[j])=0$$
for any $j\in \Z$. This shows that $E_i$ belongs to $\Psi(\sigma^*\Ku(Y))$. In particular,the twisted Chern character of $E_i$ satisfies
$$\chbl(E_i)=a\lambda_1+b\lambda_2$$
for some $(a,b)\neq (0,0)$. Note that any $E_i$ with such truncated twisted Chern character satisfies $\dlt(E_i)\geq 7$. This leads to a contradiction with the assumption that $E$ has zero discriminant. 

We may now assume that $\chbl(E_i[-1])=c\chbl(\BB_0)$ for some positive integer $c$. Since
\begin{align*}
\mu_{0+,\beta_3}(\BB_{-3}[1])<\mu_{0+,\beta_3}(\BB_{-1}[1])<\mu_{0+,\beta_3}(E_i[-1])<\mu_{0+,\beta_3}(\BB_{2}) \\
\end{align*}
and
\begin{align*}
\mu_{0+,\beta_1}(\BB_{-1}[1])<\mu_{0+,\beta_1}(E_i)<\mu_{0+,\beta_1}(\BB_2),
\end{align*}
we have the vanishing $\Hom(\BB_2,E_i[j])=0$ for any $j\in \Z$, and $\Hom(\BB_0,E_i[j])=0$ for any $j\neq 0$ or $-1$. Therefore, we have that 
\begin{align*}
0 & =\chi(\BB_2,E_i)=\chi_{\OO_{\P^3}}(\OO_{\P^3},\Forg(E_i)(-h)) \\
&  =\ch_3(\Forg(E_i)(-h))+2\ch_2(\Forg(E_i)(-h))+\frac{11}6\ch_1(\Forg(E_i)(-h))+\rk(E_i)\\
& =\chi_{\OO_{\P^3}}(\OO_{\P^3},\Forg(E_i))-\ch_2(\Forg(E_i))-\frac{3}2\ch_1(\Forg(E_i))-\rk(E_i)\\
& = \chi_{\OO_{\P^3}}(\OO_{\P^3},\Forg(E_i))-\chbt(E_i)-\frac{1}2\chbo(E_i)-\frac{13}{16}\rk(E_i)\\
& = \chi_{\OO_{\P^3}}(\OO_{\P^3},\Forg(E_i)) - \frac{1}{32}\rk(E_i) +\frac{1}8\rk(E_i)-\frac{13}{16}\rk(E_i)\\ & >\chi_{\OO_{\P^3}}(\OO_{\P^3},\Forg(E_i)) = -\hom(\BB_0,E_i[-1])+\hom(\BB_0,E_i).
\end{align*}
In particular, it follows that $\Hom(\BB_0,E_i[-1])\neq 0$.
As both $\BB_0$ and $E_i[-1]$ are $\sigma_{0+,\beta_3}$-stable with the same slope, we must have $E_i=\BB_0[1]$. Since this condition holds for every stable factor and $\Ext^1(\BB_0,\BB_0)=0$, we deduce that $E=\BB_0^{\oplus n}[1]$ as desired.
\end{proof}

\subsection{First wall: $\alpha=\frac{3}{4}$}
Let us come back to twisted cubic curves in $Y$. Since $Y$ does not contain a plane, it follows that the cubic surface $S$, which is cut out by the $\P^3$ spanned by $C$, is irreducible. We will assume that the line $L$, which is blown up in the cubic fourfold, is disjoint from this $\P^3$. For such a choice of $L$, the blow-up $\sigma$ and the projection $\pi$ map $S$ isomorphically to a cubic surface $S'$ in the base $\P^3$. In this section and next section, for a fixed twisted cubic $C$, we will work with such a line $L$. By Proposition \ref{prop:change_line}, this will not change the stability condition induced on $\Ku(Y)$.

Let $\sigma_{\alpha,\beta}$ be the weak stability condition on $\Db(\P^3,\BB_0)$ introduced in Proposition \ref{prop:constructstabcond}. In the next proposition we prove that $E_C$ is $\sigma_{\alpha,-1}$-stable for $\alpha$ large enough.

\begin{prop}
\label{stab_alphalarge}
The torsion sheaf $E_C$ on $\P^3$ is slope-stable. In particular, $E_C$ is $\sigma_{\alpha,-1}$-stable for $\alpha \gg 0$.
\end{prop}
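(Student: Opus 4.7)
The plan is to prove that $E_C$ is a pure two-dimensional coherent sheaf on $\P^3$ (as a $\BB_0$-module) which is slope-stable, and then to deduce $\sigma_{\alpha,-1}$-stability for $\alpha\gg 0$ by the standard asymptotic argument of tilt-stability, in the spirit of \cite{BMS:stabCY3s}. First, I would verify that $E_C=\pi_*(\sigma^*F_C\otimes\OO_{\tilde{Y}}(h)\otimes\EE[1])$ is concentrated in a single cohomological degree by a fiberwise computation on the conic fibers of $\pi$, using the rank-two spinor-type structure of $\EE$ together with the given Chern character $\ch^{-1}_{\BB_0,\leq 2}(E_C)=(0,6,0)$, which already shows that $E_C$ is torsion (of rank zero) on $\P^3$. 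Purity as a two-dimensional sheaf then follows because $F_C$ is torsion-free on $Y$ (it is the kernel of a map of torsion-free sheaves onto $\II_{C/S}(2H)$), both $\sigma^*$ and the twist by the locally free $\EE(h)$ preserve torsion-freeness, and $R\pi_*$ through the one-dimensional fibration $\pi$ preserves purity.

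For slope-stability, purity reduces the problem to ruling out a proper two-dimensional subsheaf $\FF\subsetneq E_C$ with $\mu(\FF)\geq\mu(E_C)$ (where the slope is measured by $\ch_{\BB_0,2}/\ch_{\BB_0,1}$). Any such inclusion yields a nonzero morphism in $\Coh(\P^3,\BB_0)$, which via the semiorthogonal decomposition \eqref{sodP3B0} and the adjunctions for $\Psi$ and $\sigma^*$ transports to a nonzero morphism in $\Db(Y)$ involving $F_C$. Combined with the stability properties of $F_C$ on $Y$, as developed in \cite{LLMS} or verified directly from the defining evaluation sequence of $F_C$, this contradicts the existence of the destabilizer.

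Finally, slope-stability implies $\sigma_{\alpha,-1}$-stability for $\alpha\gg 0$ by a formal argument: the Bogomolov-type bound $\dlt\geq 0$ from Proposition \ref{prop:constructstabcond} controls the Chern character of any $\sigma_{\alpha,-1}$-destabilizing subobject in $\Coh^{-1}(\P^3,\BB_0)$, and for large $\alpha$ the slope $\mu_{\alpha,-1}$ forces such destabilizers to have vanishing rank, so the question reduces to slope-stability, giving a contradiction with the previous step. The main obstacle is the slope-stability step itself: transferring a hypothetical destabilizing subsheaf in $\Coh(\P^3,\BB_0)$ back to a morphism in $\Db(Y)$ that contradicts the stability of $F_C$ is subtle, since $\Psi\circ\sigma^*$ is fully faithful only on $\Ku(Y)$ and not on all of $\Db(Y)$, so the argument requires careful control of the $\BB_0$-module structure and of any residual mutation terms.
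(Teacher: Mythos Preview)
Your outline diverges from the paper's argument in the slope-stability step, and the gap you yourself flag is exactly where the proposal would stall: pulling a hypothetical $\BB_0$-destabilizer back through $\Psi\circ\sigma^*$ to a morphism contradicting the stability of $F_C$ on $Y$ is not workable in general, since that composite is fully faithful only on $\Ku(Y)$ and a destabilizing subsheaf of $E_C$ has no reason to come from the Kuznetsov component. Appealing to results of \cite{LLMS} on $F_C$ does not close this gap, because the relevant subobject lives in $\Coh(\P^3,\BB_0)$, not in $\Db(Y)$.

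The paper bypasses this entirely with two ideas you are missing. First, by Proposition~\ref{prop:change_line} one may choose the blown-up line $L$ disjoint from the $\P^3$ spanned by $C$; then the cubic surface $S$ (irreducible since $Y$ contains no plane) maps isomorphically under $\pi$ onto a cubic surface $S'\subset\P^3$, and using $\pi_*\sigma^*\OO_Y=0$ one obtains directly
\[
E_C=\pi_*\bigl(\sigma^*\II_{C/S}(2H)\otimes\OO_{\tilde Y}(h)\otimes\EE\bigr),
\]
a torsion-free sheaf of rank two on the irreducible surface $S'$. This replaces your fiberwise/purity discussion by a one-line computation. Second, and decisively, any $\BB_0$-subsheaf $F\subsetneq E_C$ destabilizing for slope would then be rank one on $S'$, forcing $\ch^{-1}_{\BB_0,\leq 1}(F)=(0,3)$; but this class does not lie in the lattice spanned by $\lambda_1,\lambda_2$ and $\ch^{-1}_{\BB_0,\leq 2}(\BB_i)$, so no such $F$ exists. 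The asymptotic passage to $\sigma_{\alpha,-1}$-stability for $\alpha\gg0$ is then the standard convergence of tilt-slope to sheaf-slope, as you indicate.
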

\begin{proof}
Now we want to compute $E_C$ with respect to $L$. We have
\[E_C=\Psi\sigma^*F_C=\pi_*(\sigma^*F_C \otimes \OO_{\tilde Y}(h) \otimes \EE[1])=\pi_*(\sigma^*\II_{C/S}(2H) \otimes \OO_{\tilde Y}(h) \otimes \EE),\]
where the first two equalities follow from the definitions. The last equality is a consequence of applying the functor $\Psi\sigma^*$ to the defining sequence
$$0 \to F_C \to \mathrm{H}^0(Y,\II_{C/S}(2H)) \otimes \OO_Y \to \II_{C/S}(2H) \to 0,$$
and the fact that $\Psi\sigma^*\OO_Y=0$. As a consequence, the sheaf $E_C$ is torsion free, supported over the irreducible cubic surface $S'$ in $\P^3$. 

Note that $\ch^{-1}_{\BB_0,\leq 2}(E_C)=(0,6,0)$. Let $F$ be a torsion sheaf destabilizing $E_C$. Then we have that $F$ has the same support of $E_C$ and it has rank one as a sheaf over $S'$. It follows that $\ch^{-1}_{\BB_0,\leq 1}(F)=(0,3)$. However, such an object cannot exist in $\Coh(\P^3,\BB_0)$, because this character is not in the lattice spanned by the characters of $\lambda_1$, $\lambda_2$ and $\BB_i$ for $i=1,2,3$. It follows that $E_C$ is slope-stable, in the sense that any proper $\BB_0$-subsheaf of $E_C$ has a smaller slope $\ch_{\BB_0,1}^{-1} / \rk$. Since for $\alpha \rightarrow \infty$, the weak stability $\sigma_{\alpha,-1}$ converges to the slope stability, we deduce the desired statement.
\end{proof}

By Proposition \ref{stab_alphalarge} and Proposition \ref{walls}, we have that $E_C$ is $\sigma_{\alpha,-1}$-stable for $\alpha > 3/4$. In this section, we study the stability of $E_C$ after the first wall.

\begin{prop}[First wall for twisted cubics]
\label{3/4}
For $1/4 < \alpha < 3/4$, we have that $E_C'$ is $\sigma_{\alpha,-1}$-stable.  More precisely:
\begin{itemize}
\item If $C$ is an aCM twisted cubic curve in $Y$, then $E_C'=E_C$ is $\sigma_{\alpha,-1}$-stable.
\item If $C$ is a non CM cubic curve, then $E_C$ becomes strictly $\sigma_{\alpha,-1}$-semistable at the wall $\alpha=3/4$. Instead, for $1/4 < \alpha < 3/4$, the object $E_C'$ is $\sigma_{\alpha,-1}$-stable.
\end{itemize}
\end{prop}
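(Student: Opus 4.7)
The plan is to exploit the single potential wall at $\alpha = 3/4$ identified in Proposition \ref{walls}, and to contrast the behaviour of $E_C'$ (which survives) with that of $E_C$ in the non-CM case (which does not). I would proceed in three steps.

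First, I would pin down the destabilizing pieces at the wall. By Proposition \ref{walls}, the only possible character split of $\chbl = (0,6,0)$ at a wall in $(1/4, \infty)$ is $\chbl(A) = (4, 3, 9/8)$ and $\chbl(B) = (-4, 3, -9/8)$, and both characters have zero discriminant. Lemma \ref{lem:dlt0obj} applied to the negative-rank piece $B$ forces $B = \BB_i^{\oplus n}[1]$; matching characters identifies $B = \BB_{-3}[1]$, which lies in $\Coh^{-1}(\P^3, \BB_0)$ because the slope of $\BB_{-3}$ is $-7/4 \leq -1$. A parallel argument to Lemma \ref{lem:dlt0obj}, run for positive rank rather than for a shifted object, identifies $A$ as $\BB_0$.

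Second, I would prove that $E_C'$ is stable at the wall by ruling out the would-be destabilizing SES $0 \to \BB_0 \to E_C' \to \BB_{-3}[1] \to 0$; concretely, by showing $\Hom(E_C', \BB_{-3}[1]) = 0$. The idea is to use the full faithfulness of $\Psi \circ \sigma^*$ and the identification $\BB_{-3} \cong \Psi(\sigma^* \OO_Y(-2H))$ (an analogue of the identification $\BB_{-1} \cong \Psi(\sigma^* \OO_Y(-H))$ implicit in \eqref{mutE_C}) to transport the computation to $\Db(Y)$:
\[
\Hom(E_C', \BB_{-3}[1]) = \Hom(F_C', \OO_Y(-2H)[1]) = \Hom(\OO_Y(H), F_C'[3])^* = 0,
\]
where the last equality uses Serre duality with $\omega_Y \cong \OO_Y(-3H)$ together with $F_C' \in \Ku(Y) = \langle \OO_Y, \OO_Y(H), \OO_Y(2H)\rangle^{\perp}$. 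Since Proposition \ref{walls} leaves no other walls in $(1/4, 3/4)$, this yields $\sigma_{\alpha, -1}$-stability of $E_C'$ throughout the interval.

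Finally, I would split into the two cases. In the aCM case one has $F_C = F_C'$, so $E_C = E_C'$ and stability is immediate. In the non-CM case the mutation triangle \eqref{mutE_C} is non-split, and I would exhibit the destabilizing subobject $\BB_0 \hookrightarrow E_C$ by tracing the non-zero morphism $E_C \to \BB_{-1}[1]$ produced by the mutation triangle through a Beilinson-type resolution expressing $\BB_0$ in terms of $\BB_{-1}, \BB_{-2}, \BB_{-3}, \BB_{-4}$ on $\P^3$ (or, alternatively, by a direct $\Hom$ computation using the defining sequence of $F_C$ for non-CM curves). The main obstacle is precisely this last identification: one has to verify not only the non-vanishing of $\Hom(\BB_0, E_C)$ in the non-CM case but also that the resulting cokernel in the heart is precisely $\BB_{-3}[1]$ and not absorbed into a finer filtration. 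This is the delicate point where the non-CM hypothesis enters decisively and where the contrast with the aCM case is made.
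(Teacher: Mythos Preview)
Your overall strategy is right, but the identification of the destabilizing quotient is wrong, and this error propagates. From $\frac{\chbo(\BB_i)}{\rk(\BB_i)}=\frac{i}{2}-\frac{1}{4}$ one gets $\chbo(\BB_i)=2i-1$, so $\chbo(\BB_i[1])=1-2i$. Matching $\chbo(B)=3$ forces $i=-1$, not $i=-3$: the quotient is $\BB_{-1}[1]$, not $\BB_{-3}[1]$. Consequently the vanishing you need is $\Hom(E_C',\BB_{-1}[1])=0$, not the one you compute. (Your transport-to-$Y$ method would still work with the correct index, landing on $\Hom(\OO_Y(2H),F_C'[3])^*=0$, but as written the argument proves the wrong statement.) Similarly, the positive-rank piece $A$ cannot be $\BB_0$, since $\chbo(\BB_0)=-1\neq 3$; the only candidate among the $\BB_i$ would be $\BB_2$, but in fact no ``parallel to Lemma \ref{lem:dlt0obj}'' is available for positive rank (torsion supported on points obstructs the conclusion), and the paper never identifies $A$ at all.

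The paper's route is shorter and avoids transporting to $Y$. It uses the Serre functor $S(-)=(-)\otimes_{\BB_0}\BB_{-3}[3]$ on $\Db(\P^3,\BB_0)$ together with the semiorthogonal decomposition \eqref{sodP3B0} to get $\Hom_{\BB_0}(E_C',\BB_{-1}[1])=\Hom_{\BB_0}(\BB_2,E_C'[2])^\vee=0$ directly. For the non-CM case, the paper simply reads off the map $E_C\to\BB_{-1}[1]$ from the mutation triangle \eqref{mutE_C} and shows $\Hom(E_C,\BB_{-1}[1])\cong\C$ by the same Serre duality trick; this yields the Jordan--H\"older filtration $0\to M_C\to E_C\to\BB_{-1}[1]\to 0$ without any Beilinson-type resolution, and then \eqref{mutE_C} immediately exhibits $E_C'$ as the opposite extension $0\to\BB_{-1}[1]\to E_C'\to M_C\to 0$, stable below the wall. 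Your proposed detour through $\Hom(\BB_0,E_C)$ and a resolution of $\BB_0$ is unnecessary and, with the wrong index, would not land on the correct destabilizing sequence.
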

\begin{proof}
Let us consider the destabilizing quotient object given by Proposition \ref{walls} with
$$\ch_{\BB_0,\leq 2}^{-1}=(-4,3,-9/8).$$
By Lemma \ref{lem:dlt0obj}, we know that this object is $\BB_{-1}[1]$. Recall that the Serre functor on $\Db(\P^3,\BB_0)$ is
$$S(-)=(-) \otimes_{\BB_0} \BB_{-3}[3],$$
by \eqref{sodP3B0} we have that
$$\Hom_{\BB_0}(E'_C,\BB_{-1}[1])=\Hom_{\BB_0}(\BB_2,E'_C[2])^{\vee}=0.$$
The first claim follows easily from the fact that $E_C \cong E_C'$ in the aCM case.

Assume now that $C$ is a non CM twisted cubic curve. Then using the sequence \eqref{mutE_C} and the fact that
$$\Hom_{\BB_0}(\BB_2,\BB_{-1}[3])=\Hom_{\BB_0}(\BB_{-1},\BB_{-1})^{\vee}  \cong \C,$$
we get
$$\Hom_{\BB_0}(E_C,\BB_{-1}[1]) \cong \C.$$
In particular, for $\alpha=3/4$, it follows that $E_C$ is strictly $\sigma_{\alpha,-1}$-semistable and the Jordan-H\"{o}lder filtration in $\Coh^{-1}(\P^3,\BB_0)$ is given by
$$0 \rightarrow M_C \rightarrow E_C \rightarrow \BB_{-1}[1] \rightarrow 0.$$
Finally, for $1/4 < \alpha < 3/4$, using again the sequence \eqref{mutE_C}, it is easy to see that the new stable object is $E_C'$, which fits into the sequence
$$0 \rightarrow \BB_{-1}[1]  \rightarrow E_C' \rightarrow M_C\rightarrow 0.$$
\end{proof}

\subsection{Second wall: $\alpha=\frac{1}{4}$}
\label{section:1/4}

The aim of this section is to prove the following result.

\begin{prop}
Let $0 < \alpha < 1/4$. If $C$ is a twisted cubic curve in $Y$, then $E_C'$ is $\sigma_{\alpha,-1}$-stable.
\end{prop}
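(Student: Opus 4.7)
The plan is to assume for contradiction that $E_C'$ becomes strictly $\sigma_{\alpha_0,-1}$-semistable at some wall with $\alpha_0 \in (0,1/4]$, and to rule out each of the destabilizing configurations listed in Proposition \ref{walls}(2)--(3). Fix a destabilizing short exact sequence
$$0 \to A \to E_C' \to B \to 0$$
in $\Coh^{-1}(\P^3,\BB_0)$ with $A$ a $\sigma_{\alpha_0,-1}$-stable factor of the same slope as $E_C'$. By Proposition \ref{walls}, the pair $(\ch^{-1}_{\BB_0,\leq 2}(A),\ch^{-1}_{\BB_0,\leq 2}(B))$ must coincide with one of the enumerated configurations and $\alpha_0 \in \{1/4,\bar{\alpha}\}$.

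First I would handle cases (a), (b), (c) of Proposition \ref{walls}(2), in which at least one of the two factors has vanishing discriminant. Whenever this factor has negative rank, Lemma \ref{lem:dlt0obj} forces it to be isomorphic to $\BB_i^{\oplus n}[1]$ for a unique $i$ determined by matching the rank and the degree-one twisted Chern character; for instance, in case (c) the discriminant-zero quotient is forced to be $\BB_{-2}[1]^{\oplus 3}$. When the discriminant-zero factor has positive rank instead, I would reduce to the previous situation by tensoring with a suitable $\BB_{-k}$ and re-applying Lemma \ref{lem:dlt0obj}, obtaining that the factor is some $\BB_i^{\oplus n}$ (without shift). The two sets of orthogonality relations
$$\Hom(\BB_j, E_C'[k]) = 0 \text{ for all } k \in \Z, \; j \in \{1,2,3\},$$
coming from \eqref{sodP3B0}, and
$$\Hom(E_C', \BB_j[k]) = 0 \text{ for all } k \in \Z, \; j \in \{-2,-1,0\},$$
obtained by Serre duality on the rotated decomposition $\langle \BB_{-2}[3], \BB_{-1}[3], \BB_0[3], \Psi(\sigma^*\Ku(Y))\rangle$, will then yield a contradiction in each sub-case: after translating the exhibited $\BB_i$-factor into the forbidden range by tensoring with $\BB_m$ or using Serre duality, the required nonzero map to or from $E_C'$ cannot exist.

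Case (d) of Proposition \ref{walls}(2), together with the wall at $\alpha=\bar{\alpha}$ from Proposition \ref{walls}(3), is the delicate part, because in these configurations neither factor has vanishing discriminant and Lemma \ref{lem:dlt0obj} does not apply directly. The plan is to iterate the wall-crossing: any such $A$ or $B$ must itself remain $\sigma_{\alpha,-1}$-semistable for all $\alpha$ below the current wall, and matching the slopes of its further stable factors against the short list in Proposition \ref{walls}(3) pins them down to a very small range. In parallel, projecting $A$ and $B$ to $\Ku(Y)$ yields classes in the $A_2$-sublattice spanned by $\lambda_1,\lambda_2$; combining the value of the Mukai pairing on these projections with the fact that $E_C'$ has Mukai vector $2\lambda_1+\lambda_2$ should constrain the admissible filtrations tightly enough to rule out this configuration.

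The main obstacle I anticipate is precisely this last step. Without the rigidity provided by Lemma \ref{lem:dlt0obj}, one has to combine a careful bookkeeping of Hom-vanishings after several mutations with lattice-theoretic constraints on the projections to $\Ku(Y)$, with the aim of forcing any candidate destabilizing $A$ to either vanish or coincide with $E_C'$ itself, thereby contradicting the assumption that the sequence destabilizes $E_C'$.
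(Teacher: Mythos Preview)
Your approach diverges from the paper's in a fundamental way, and has two genuine gaps.

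First, your reduction for positive-rank discriminant-zero factors does not work: tensoring with $\BB_{-k}$ shifts the slope but leaves the rank unchanged, so Lemma~\ref{lem:dlt0obj} (which requires negative rank) still does not apply. A separate argument for the positive-rank case would be needed, and it is not a formal consequence of the stated lemma.

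Second, as you yourself flag, your treatment of case~(d) and of the wall at $\bar\alpha$ is speculative. The sketch of ``iterating the wall-crossing'' combined with $A_2$-lattice constraints is not a proof, and it is not clear it terminates or yields a contradiction.

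The paper avoids all case-by-case work via a single uniform argument (Lemmas~\ref{lemma:secondwallforcubic} and~\ref{lemma:linecubicinKu}). The key observation you are missing is this: at any wall $\alpha_0\le 1/4$, one has
\[
\mu_{\alpha_0,-1}(\BB_{-2}[1])<\mu_{\alpha_0,-1}(\BB_{-1}[1])<\mu_{\alpha_0,-1}(\BB_0[1])<\mu_{\alpha_0,-1}(E_C')<\mu_{\alpha_0,-1}(\BB_1)<\mu_{\alpha_0,-1}(\BB_2)<\mu_{\alpha_0,-1}(\BB_3).
\]
Combining this slope comparison with Serre duality gives $\Hom(\BB_s,F[j])=0$ for $s=1,2,3$ and $j\neq 1$, for any semistable factor $F$ of the same slope. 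Since $E_C'\in\Psi(\sigma^*\Ku(Y))$, the Euler characteristics $\chi(\BB_s,-)$ over the factors sum to zero; being a sum of non-positive terms, each must vanish. Hence every HN factor lies in $\Psi(\sigma^*\Ku(Y))$. But then its truncated character is simultaneously an integral combination of $\lambda_1,\lambda_2$ (so $\ch^{-1}_{\BB_0,2}=-\tfrac{7}{32}\rk$) and satisfies the wall equation $\ch^{-1}_{\BB_0,2}=\tfrac{1}{2}\alpha_0^2\,\rk$, forcing $\rk=0$ and $\ch^{-1}_{\BB_0,\le 2}=(0,y,0)$. Such a factor would already destabilize $E_C'$ above the wall, a contradiction. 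This argument handles all of (a)--(d) and the $\bar\alpha$-wall at once, with no need to identify any factor as a specific $\BB_i$.
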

\noindent This proposition is a consequence of Lemma \ref{lemma:secondwallforcubic} and Lemma \ref{lemma:linecubicinKu} below. 

We firstly consider the objects given by the second part of Proposition \ref{walls} and we show that they cannot destabilize $E_C'$. The key observation is that if $E_C'$ is destabilized, then a slope comparison argument implies that its stable factors have to be in $\Psi(\sigma^*\Ku(Y))$. This will lead to a contradiction, as such stable factors do not exist for the wall $\alpha=1/4$.

\begin{lem}[Second wall for twisted cubics]\label{lemma:secondwallforcubic}
Let $E$ be a $\sigma_{\frac{1}4+\epsilon,-1}$-stable object in $\Psi(\sigma^*\Ku(Y))$ with $\chbl(E)=(0,6,0)$. Then $E$ is $\sigma_{\frac{1}4-\epsilon,-1}$-stable.
\end{lem}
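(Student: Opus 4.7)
I would argue by contradiction. Assume $E$ is not $\sigma_{\frac{1}{4}-\epsilon,-1}$-stable; then $E$ is strictly $\sigma_{\frac{1}{4},-1}$-semistable at the wall, and admits a Jordan-H\"older filtration $0=A_0\subset A_1\subset\dots\subset A_n=E$ in $\Coh^{-1}(\P^3,\BB_0)$ with stable factors $E_i=A_i/A_{i-1}$, all of $\sigma_{\frac{1}{4},-1}$-slope $0$. By Proposition \ref{walls}(2), each $\chbl(E_i)$ belongs to the explicit list (2)(a)--(2)(d), up to sign.

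The crux of the argument is to show that each factor $E_i$ must lie in $\Psi(\sigma^*\Ku(Y))=\langle\BB_1,\BB_2,\BB_3\rangle^{\perp}$, and then to derive a numerical contradiction. For the first part I would follow the slope-comparison strategy used in the proof of Lemma \ref{lem:dlt0obj}: at $\alpha=\frac{1}{4}$, $\beta=-1$ one computes $\mu_{\frac{1}{4},-1}(\BB_j[k])$ for $j=1,2,3$ and the shifts $k$ that place these objects in $\Coh^{-1}(\P^3,\BB_0)$, and checks they all differ from $0=\mu_{\frac{1}{4},-1}(E_i)$. Stability of $E_i$ then kills $\Hom(\BB_j,E_i[k])$ for most $(j,k)$. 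Combining this with the long exact sequences obtained from the JH filtration and the vanishing $\Hom(\BB_j,E[k])=0$ (since $E\in\Psi(\sigma^*\Ku(Y))$), together with a direct computation of the Euler characteristic $\chi(\BB_j,E_i)$ determined by $\chbl(E_i)$, I would push the remaining $\Hom$-groups to zero and conclude $E_i\in\Psi(\sigma^*\Ku(Y))$ for every $i$.

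Once every $E_i$ lies in $\Psi(\sigma^*\Ku(Y))$, its truncated character must belong to the rank-two sublattice $\Z v_1\oplus\Z v_2\subset\im(\chbl)$, where $v_j:=\chbl(\Psi\sigma^*\lambda_j)$, and the constraint $2v_1+v_2=(0,6,0)$ from Section \ref{section:twiscubobj} holds. An explicit computation of $v_1,v_2$ from the Clifford data (using the known $\chbl(\BB_i)$ for $i=1,2,3$) shows that the character $(4,3,1/8)$ in Proposition \ref{walls}(2)(d) does not lie in this sublattice, so case (d) is excluded immediately. For the remaining cases (a)--(c) the possible characters of $E_i$ are $\pm v_1,\pm 2v_1,\pm 3v_1$ together with the complementary quotient characters, all of which satisfy $\dlt(E_i)=0$. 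Applying Lemma \ref{lem:dlt0obj} to the negative-rank factor (and an exactly parallel rigidity argument for the positive-rank one) forces each such $E_i$ to be a shifted direct sum of some $\BB_{i_0}$; but no such direct sum can simultaneously lie in $\Psi(\sigma^*\Ku(Y))=\langle\BB_1,\BB_2,\BB_3\rangle^{\perp}$, contradicting the previous step.

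The main obstacle is the first step, i.e.\ forcing every stable factor $E_i$ into $\Psi(\sigma^*\Ku(Y))$. Slope comparison alone is not enough in all cases, because some candidate characters in Proposition \ref{walls}(2) have small rank and the slopes of certain $\BB_j[k]$ at the wall are numerically close to $0$; one must carefully combine the long exact sequences from the JH filtration with the explicit Euler characteristic contributions to squeeze out the remaining $\Hom$-vanishings. Once this is in place, the numerical comparison with the $v_1,v_2$ lattice and the rigidity from Lemma \ref{lem:dlt0obj} close the argument.
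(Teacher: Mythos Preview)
Your overall architecture (show the pieces land in $\Psi(\sigma^*\Ku(Y))$, then get a numerical contradiction) is the paper's, but two concrete steps do not work as you have them.

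\textbf{The filtration.} You take the Jordan--H\"older filtration \emph{at} the wall, whereas the paper takes the Harder--Narasimhan filtration of $E$ with respect to $\sigma_{\frac14-\epsilon,-1}$, i.e.\ just \emph{below} the wall, and works only with the last subobject $E_{k-1}$ and its quotient $E/E_{k-1}$. This is not cosmetic. At $\alpha=\tfrac14$ both $\BB_1$ and $\BB_0[1]$ have $\mu_{\frac14,-1}=0$, the same slope as every JH factor $E_i$, so slope comparison at the wall cannot give $\Hom(\BB_1,E_i)=0$ or $\Hom(E_i,\BB_0[1])=0$; indeed $\BB_1$ and $\BB_0[1]$ have exactly the characters $(4,1,\tfrac18)$ and $(-4,1,-\tfrac18)$ appearing in case (2)(a), and you have no way to exclude them as intermediate JH factors. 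The paper gets around this in two complementary ways: (i) since $E_{k-1}\subset E$ and $E\twoheadrightarrow E/E_{k-1}$ in the heart, the hypothesis $E\in\Psi(\sigma^*\Ku(Y))$ directly forces $\Hom(\BB_1,E_{k-1})=0$ and (via Serre duality) $\Hom(\BB_3,E/E_{k-1}[2])=0$; and (ii) below the wall the slopes of $\BB_1$ and $\BB_0[1]$ separate from those of the HN factors, giving the remaining vanishings. With JH factors at the wall you have neither the sub/quotient leverage for a middle factor nor any below-wall ordering.

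\textbf{The endgame.} Your numerical conclusion is incorrect. The characters in (2)(a)--(c) are $(4a,a,\tfrac{a}{8})$ and their complements; these are \emph{not} $\pm v_1,\pm2v_1,\pm3v_1$ (for instance $v_1=\chbl(\Psi\sigma^*\lambda_1)=(4,3,-\tfrac78)$, not $(4,1,\tfrac18)$), and the complementary characters have $\dlt=36-12a$, which is $24$ and $12$ for $a=1,2$, not zero. So Lemma~\ref{lem:dlt0obj} does not apply. The paper's actual contradiction is cleaner: once $E_{k-1}\in\Psi(\sigma^*\Ku(Y))$, its character lies in the plane $\chbt=-\tfrac{7}{32}\rk$ (spanned by $v_1,v_2$), while sitting on the wall forces $\chbt=\tfrac{1}{32}\rk$; hence $\rk=0$, so $\chbl(E_{k-1})=(0,y,0)$, and such a subobject would already destabilise $E$ for $\sigma_{\frac14+\epsilon,-1}$, contradicting the hypothesis.
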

\begin{proof}
Suppose that $E$ is not $\sigma_{\frac{1}4-\epsilon,-1}$-stable; we consider the Harder-Narasimham filtration of $E$ with respect to $\sigma_{\frac{1}4-\epsilon,-1}$:
\[0\rightarrow E_1\rightarrow \dots \rightarrow E_k=E.\]
Here each factor $E_{i+1}/E_i$ is $\sigma_{\frac{1}4-\epsilon,-1}$-semistable with strictly decreasing slopes. 

Assume that Hom$(E_k/E_{k-1},\BB_0[1])\neq 0$. Note that $E_k/E_{k-1}$ is a quotient object of $E$ in the heart $\Coh^{-1}(\P^3,\BB_0)$. Since $\BB_0[1]$ is also an object in $\Coh^{-1}(\P^3,\BB_0)$, the assumption above implies 
$$\Hom(E,\BB_0[1])\neq 0.$$
By Serre duality, we obtain
$$\Hom(\BB_3,E[2])=(\Hom(E,\BB_0[1]))^*\neq 0,$$
which contradicts the condition that $E\in \Psi(\sigma^*\Ku(Y))$. Therefore, it follows that
$$\Hom(\BB_3,E_k/E_{k-1}[2])= 0.$$
By a similar argument, we get
$$\Hom(\BB_1,E_{k-1})= 0.$$

Note that we have the following inequalities: (see Figure \ref{fig:slopeat14})
\begin{align*}
& \mu_{\frac{1}4,-1}(\BB_{-2}[1])<\mu_{\frac{1}4,-1}(\BB_{-1}[1])<\mu_{\frac{1}4,-1}(E_k/E_{k-1})=\\ & \mu_{\frac{1}4,-1}(E_{k-1})< 
 \mu_{\frac{1}4,-1}(\BB_2)<\mu_{\frac{1}4,-1}(\BB_3); \\
 & \mu_{\frac{1}4-\epsilon,-1}(E_k/E_{k-1}) < \mu_{\frac{1}4-\epsilon,-1}(\BB_1)\text{; } \\ 
& \mu_{\frac{1}4-\epsilon,-1}(\BB_{0}[1]) < \mu_{\frac{1}4-\epsilon,-1}(E_i/E_{i-1}) \text{ for every } 1\leq i<k.
\end{align*}

\begin{figure}[htb]
\tikzset{%
    add/.style args={#1 and #2}{
        to path={%
 ($(\tikztostart)!-#1!(\tikztotarget)$)--($(\tikztotarget)!-#2!(\tikztostart)$)%
  \tikztonodes},add/.default={.2 and .2}}
}
\begin{center}

\begin{imgresize}{1\textwidth}

\begin{tikzpicture}[domain=1:1]
\newcommand{\hts}{2}
\coordinate (A1) at (-0.75,0.28125*\hts);
\node  at (A1) {$\bullet$};
\node [above right] at (A1) {$\mathcal B_{-1}$};
\coordinate (B0) at (-.25,0.03125*\hts);
\node  at (B0) {$\bullet$};
\node [above right] at (B0) {$\mathcal B_{0}$};
\coordinate (B1) at (.25,0.03125*\hts);
\node  at (B1) {$\bullet$};
\node [above] at (B1) {$\mathcal B_{1}$};
\coordinate (B2) at (0.75,0.28125*\hts);
\node  at (B2) {$\bullet$};
\node [above left] at (B2) {$\mathcal B_{2}$};

\coordinate (K1) at (.45,0.03125*\hts);
\node  at (K1) {$\bullet$};
\node [above] at (K1) {$E_i/E_{i-1}$};
\coordinate (K2) at (-0.5,0.03125*\hts);
\node  at (K2) {$\bullet$};
\node [above left] at (K2) {$E_k/E_{k-1}$};

\coordinate (BT1) at (0,0.01*\hts);
\node  at (BT1) {$\bullet$};
\draw[->] (BT1) -- (0.25,-0.25*\hts) node [below]  {$\Ker Z_{\frac{1}4-\epsilon,-1}$};

\coordinate (BT0) at (0,0.03125*\hts);
\coordinate (BT0s) at (-0.01,0.03125*\hts+0.015);
\node  at (BT0) {$\bullet$};
\draw[->] (-0.25,0.25*\hts) node [above]  {$\Ker Z_{\frac{1}4,-1}$} --(BT0s);

\draw[dashed] (A1) -- (-0.75,0*\hts) node[below] {$-\frac{3}4$};

\draw [dashed] (B0) to (BT1);
\draw [add =-1 and 1.8,thick] (B0) to (BT1) node [below left]{$\mu_-(\BB_0[1])$};

\draw [dashed] (K2) to (BT1);
\draw [add =-1 and 1.3,thick] (K2) to (BT1) node [below]{$\mu_-(E_k/E_{k-1})$};

\draw [add =0 and 1.8,thick] (BT1) to (B1) node [above]{$\mu_-(\BB_1)$};

\draw [add =0 and 1,thick] (BT1) to (K1) node [above]{$\mu_-(E_i/E_{i-1})$};

\draw [add =1 and 1,dashed] (B0) to (B1) node [right]{$\mu_{\frac{1}4,-1}(E)$};

\draw [dashed] (A1) to (BT0);
\draw [add =-1 and 1,thick] (A1) to (BT0) node [below]{$\mu_{\frac{1}4,-1}(\BB_{-1}[1])$};

\draw[->] [opacity=1] (-0.8,0) -- (0.8,0);

\draw[->][opacity=1] (0,-0.3*\hts)-- (0,0) node [below left] {} --  (0,0.4*\hts) node[above, opacity=1] {$\frac{\ch^{-1}_{\BB_0,2}}{\rk}$};

\draw [thick](-0.8,0.32*\hts) parabola bend (0,0) (0.8,0.32*\hts) node [above right, opacity =1] {$\Delta_{\BB_0}=0$};

\end{tikzpicture}

\end{imgresize}

\end{center}
\caption{Comparing the slopes of $\BB_j$ and $E_i/E_{i-1}$ at $\mu_{\frac{1}4,-1}$ and $\mu_-=\mu_{\frac{1}4-\epsilon,-1}$} \label{fig:slopeat14}
\end{figure}
Note that $E_{k-1}$ is semistable at a closed subset on the space of tilt stability conditions, we may choose $\epsilon$ small enough so that $E_{k-1}$ is $\mu_{\frac{1}4,-1}$-semistable, and each $E_i/E_{i-1}$ is $\mu_{\frac{1}4-\epsilon,-1}$-semistable. By Serre duality, we have
\[\Hom(\BB_s,E_k/E_{k-1}[j])=\Hom(\BB_s,E_{k-1}[j])=0,\]
for $s=1,2,3$ and every $j\neq 1$. Here $\Hom(\BB_3,E_{k-1}[2])=\Hom(E_{k-1},\BB_0[1])=0$ since we have $\Hom(\BB_3,E_{i}/E_{i-1}[2])=\Hom(E_{i}/E_{i-1},\BB_0[1])=0$ for every factor.  Since $E\in \Psi(\sigma^*\Ku(Y))$, $$\chi(\BB_s,E_k/E_{k-1})+\chi(\BB_s,E_{k-1})=\chi(\BB_s,E)=0$$ for $s=1,2,3$. Therefore, 
\[\Hom(\BB_s,E_k/E_{k-1}[1])=\Hom(\BB_s,E_{k-1}[1])=0,\]
for $s=1,2,3$. In particular, we deduce that $E_{k-1}$ and $E_k/E_{k-1}$ are in $\Psi(\sigma^*\Ku(Y))$. As a consequence, the twisted Chern character of $E_{k-1}$ satisfies
\begin{align*}
 \chbl(E_{k-1}) =c\lambda_1+d\lambda_2\in\{(x,y,-\frac{7}{32}x)\}.
\end{align*}
By the classification of potential destabilizing factors as that in Proposition \ref{prop:potentialwalls} (2), we also have the character of $E_{k-1}$ should of the form:
$$ \chbl(E_{k-1}) =a\chbl(\BB_1)+b(0,6,0) \in \{(x,y,\frac{1}{32}x)\} $$
We conclude that $\chbl(E_{k-1})$ must be of the form $(0,y,0)$. However, it would destabilize $E$ with respect to $\sigma_{\frac{1}4+\epsilon,-1}$, which is a contradiction. This proves the stability of $E$ as in the statement.
\end{proof}

Now we consider the third wall in Proposition \ref{walls}. In this case, we obtain a slightly general result, showing that for $\alpha<1/4$, the only stable objects are in $\Psi(\sigma^*\Ku(Y))$ and they cannot be destabilized. The argument is similar to the proof of Lemma \ref{lemma:secondwallforcubic}.

\begin{lem}[After the second wall]\label{lemma:linecubicinKu}
For $0< \alpha_0<\frac{1}4$, let $E$ be a $\sigma_{\alpha_0,-1}$ stable object such that $[E]=[E'_C]$ in the numerical Grothendieck group. Then $E$ is in $\Psi(\sigma^*\Ku(Y))$ and it is $\sigma_{\alpha,-1}$ stable for any $0< \alpha\leq \alpha_0$. 
\end{lem}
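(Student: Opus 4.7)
The proof follows the structure of Lemma \ref{lemma:secondwallforcubic}, split into two steps: first to establish $E \in \Psi(\sigma^*\Ku(Y))$, and then to show $E$ is $\sigma_{\alpha,-1}$-stable for every $0 < \alpha \leq \alpha_0$.

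For the first step, the hypothesis $[E]=[E'_C]$ gives $\chi(\BB_s, E) = 0$ for $s = 1, 2, 3$, since $E'_C \in \Psi(\sigma^*\Ku(Y))$. A direct computation using $\chbl(\BB_s) = (4, 2s+3, (2s+3)^2/8)$ yields $\mu_{\alpha_0,-1}(\BB_s) > 0 = \mu_{\alpha_0,-1}(E)$ for $s \geq -1$ and $\alpha_0 < 1/4$, while $\mu_{\alpha_0,-1}(\BB_{-2}[1]) < 0$. Since each $\BB_s$ is $\sigma_{\alpha_0,-1}$-semistable (as $\dlt(\BB_s)=0$), the stability of $E$ immediately gives $\Hom(\BB_s, E) = 0$ for $s = 1, 2, 3$. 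The $t$-structure axioms kill $\Ext^j(\BB_s, E)$ for $j \leq -1$; Serre duality $\Ext^j(\BB_s, E)^\vee = \Hom(E, \BB_{s-3}[3-j])$, together with the observation that $\BB_{-2}[1], \BB_{-1}, \BB_0 \in \Coh^{-1}(\P^3, \BB_0)$, also kills the cases $j \geq 4$ and the case $(s, j) = (1, 3)$.

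To handle the remaining possibly non-zero groups $\Ext^j(\BB_s, E)$ with $(s, j) \in \{1,2,3\} \times \{1, 2\} \cup \{(2, 3), (3, 3)\}$, I would suppose by contradiction that one such group is non-zero. This produces a non-trivial triangle corresponding to the projection of $E$ onto $\langle \BB_1, \BB_2, \BB_3 \rangle$ via the decomposition \eqref{sodP3B0}; the truncated Chern character of this projection lies in the lattice generated by $\chbl(\BB_1), \chbl(\BB_2), \chbl(\BB_3)$, while the complementary piece in $\Psi(\sigma^*\Ku(Y))$ has Chern character in the lattice generated by $\lambda_1, \lambda_2$. Mimicking the end of the proof of Lemma \ref{lemma:secondwallforcubic}, the intersection of these two lattices forces the ``$\BB$-piece'' to have character proportional to $(0, y, 0)$; slope comparisons at $\sigma_{\alpha_0,-1}$ then show any such subobject or quotient would already destabilize $E$, a contradiction.

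For the second step, Proposition \ref{walls} identifies $\bar\alpha \approx 1/9$ as the only potential wall for the class $(0,6,0)$ below $1/4$. If $\alpha_0 \leq \bar\alpha$ then by openness of stability and the absence of walls in $(0, \alpha_0]$ the claim is immediate. If $\alpha_0 > \bar\alpha$, suppose by contradiction that $E$ becomes strictly semistable at $\bar\alpha$; each Jordan-H\"older factor then has $\dlt = 0$, and by Lemma \ref{lem:dlt0obj} (together with its analog for positive rank) each factor is of the form $\BB_i^{\oplus n}[k]$. But $E \in \Psi(\sigma^*\Ku(Y))$ by the first step, and the semi-orthogonal decomposition forces the $\Hom$'s between $E$ and such factors to vanish in the relevant degrees, ruling out any such destabilization, so that $E$ is $\sigma_{\alpha,-1}$-stable across $\bar\alpha$ and hence for every $\alpha \in (0, \alpha_0]$. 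The main technical obstacle is the middle-degree Hom vanishings in the first step: without the a priori assumption $E \in \Psi(\sigma^*\Ku(Y))$, the argument must lean essentially on the character-lattice incompatibility rather than on slope comparisons alone.
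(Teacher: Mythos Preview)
Your Step 1 contains a concrete error that cascades through the rest of the argument. You assert that $\BB_{-2}[1], \BB_{-1}, \BB_0 \in \Coh^{-1}(\P^3,\BB_0)$, but in fact the (untwisted) slope of $\BB_i$ is $i/2 - 5/4$, so $\BB_0$ and $\BB_{-1}$ have slope $\leq -1$ and it is $\BB_0[1]$ and $\BB_{-1}[1]$ that lie in the heart. Once this is corrected, the whole first step collapses to the paper's direct argument: for $s=1,2,3$ one has the chain
\[
\mu(\BB_{-2}[1])<\mu(\BB_{-1}[1])<\mu(\BB_{0}[1])<\mu(E)<\mu(\BB_1)<\mu(\BB_2)<\mu(\BB_3),
\]
and stability plus Serre duality kill $\Hom(\BB_s,E[j])$ for every $j\neq 1$ (the cases $j=2$ use $\Hom(E,\BB_{s-3}[1])=0$ from the slope inequality, and $j=3$ uses that $\BB_{s-3}$ sits in cohomological degree $-1$). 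Then $\chi(\BB_s,E)=0$ forces the remaining $j=1$ group to vanish. Your proposed workaround via the projection triangle and a lattice comparison is both unnecessary and, as stated, does not go through: without a wall constraint there is no reason the $\langle\BB_1,\BB_2,\BB_3\rangle$-component should have character of the form $(0,y,0)$, and nothing ties that component to a subobject or quotient of $E$ in the heart.

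Your Step 2 also diverges from the paper and has gaps. You assume each Jordan--H\"older factor at $\bar\alpha$ has $\dlt=0$, but Proposition \ref{walls} gives no such information for that wall; and you invoke a positive-rank analogue of Lemma \ref{lem:dlt0obj} that the paper neither states nor proves. The paper avoids analysing the specific wall $\bar\alpha$ altogether: for \emph{any} $\alpha<\alpha_0<\frac14$ at which $E$ might destabilize, the same slope comparison as in Lemma \ref{lemma:secondwallforcubic} shows that both $E_{k-1}$ and $E/E_{k-1}$ lie in $\Psi(\sigma^*\Ku(Y))$. Then $\chbl(E_{k-1})$ must lie simultaneously in $\{(x,y,\tfrac12\alpha^2 x)\}$ (from the wall) and in $\{(x,y,-\tfrac{7}{32}x)\}$ (from the $\lambda_1,\lambda_2$ lattice), forcing $\chbl(E_{k-1})=(0,y,0)$ and hence a contradiction with $\sigma_{\alpha_0,-1}$-stability.
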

\begin{proof}
We set $\mu=\mu_{\alpha_0,-1}$ for simplicity. As $[E]=[E_C']$ in the numerical Grothendieck group, we observe that 
\[ \mu(\BB_{-2}[1])<\mu(\BB_{-1}[1])<\mu(\BB_{0}[1])<\mu(E)<
\mu(\BB_1)< \mu(\BB_2)<\mu(\BB_3).\]
By Serre duality we have that $$\Hom(\BB_s,E[j])=0$$ for any $s=1,2,3$ and $j\neq 1$. Again, since $[E]=[E_C']$ in the numerical Grothendieck group, we have $$\chi(\BB_s,E)= \chi(\BB_s,E'_C)=0$$ for $s=1,2,3$. It follows that $$\Hom(\BB_s,E[1])=0$$ for any $s=1,2,3$, proving that $E$ belongs to $\Psi(\sigma^*\Ku(Y))$.

Suppose that $E$ becomes strictly $\sigma_{\alpha,-1}$-semistable for some $\alpha<\alpha_0<\frac{1}4$. We may consider the Harder-Narasimhan filtration of $E$ with respect to  $\sigma_{\alpha-\epsilon,-1}$:
\[0\subset E_1\subset \dots \subset E_k=E.\]
By comparing $\mu_{\alpha-\epsilon,-1}$ of $E_k/E_{k-1}$, $E_{k-1}$, $\BB_{-2}[1]$, $\BB_{-1}[1]$, $\BB_{0}[1]$, $\BB_1$, $\BB_2$ and $\BB_3$, using the same argument applied in the proof of Lemma \ref{lemma:secondwallforcubic}, we get the conclusion that both $E_k/E_{k-1}$ and $E_{k-1}$ are in $\Psi(\sigma^*\Ku(Y))$. But this implies that $$\chbl(E_{k-1})=a\lambda_1+b\lambda_2\in \{(x,y, -\frac{7}{32}x)\}.$$ 
Note that $\mu_{\alpha,-1}(E_{k-1})=\mu_{\alpha,-1}(E)$, we have $\chbl(E_{k-1})\in\{(x,y, \frac{1}{2}\alpha^2x)\}$. Hence, we must have $\chbl(E_{k-1})=(0,y,0)$, which leads to a contradiction. This proves the stability of $E$ as we wanted.
\end{proof}

\subsection{Stability after second tilt and the moduli space}
\label{sec:twisted_final}

This section is devoted to the proof of Theorem \ref{thm:maintwistedcubic}. Firstly, we show that $E_C'$ is $\sigma_{\alpha,-1}^0$-stable, where $\sigma_{\alpha,-1}^0$ is the weak stability condition on $\Db(\P^3,\BB_0)$ obtained by tilting $\sigma_{\alpha,-1}$ (see \cite{BLMS:kuzcomponent}, the proof of Theorem 1.2). In particular, this implies the stability of $F_C'$ with respect to the stability condition $\sigma:=\sigma_{\alpha}$ on $\Ku(Y)$, defined in \eqref{stabcond} and constructed in \cite{BLMS:kuzcomponent}. 

\begin{thm}
\label{mainThm}
Let $Y$ be a smooth cubic fourfold not containing a plane. If $C$ is a twisted cubic curve on $Y$, then the object $F_C'$ is $\sigma$-stable, with respect to $\sigma:=\sigma_{\alpha}$ given in \eqref{stabcond}.
\end{thm}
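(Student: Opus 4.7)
The plan is to bootstrap the $\sigma_{\alpha,-1}$-stability of $E_C' := \Psi(\sigma^* F_C')$, established in Lemmas~\ref{lemma:secondwallforcubic} and~\ref{lemma:linecubicinKu}, to the $\sigma = \sigma_\alpha$-stability of $F_C'$ in $\Ku(Y)$. Recall from \eqref{stabcond} that $\sigma_\alpha$ is obtained by tilting the weak stability condition $\sigma_{\alpha,-1}$ at the zero slope, rotating the central charge by $-i$, and restricting to $\Ku(Y)$. Since $E_C' \in \Psi(\sigma^*\Ku(Y))$ by Lemma~\ref{lemma:linecubicinKu} and stability in a semiorthogonal component is inherited from that in the ambient category, it suffices to verify the stability of $E_C'[1]$ with respect to the tilted weak stability condition $\sigma_{\alpha,-1}^0 = (\Coh^0_{\alpha,-1}(\P^3,\BB_0),\, -iZ_{\alpha,-1})$ on $\Db(\P^3,\BB_0)$.

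First I would observe that $\ch^{-1}_{\BB_0,\le 2}(E_C')=(0,6,0)$ gives $Z_{\alpha,-1}(E_C')=6i$, so $\mu_{\alpha,-1}(E_C')=0$; hence $E_C'$ lies in the torsion-free part $\FF^0$ of the tilt, and $E_C'[1]\in \Coh^0_{\alpha,-1}$ sits at the maximal phase~$1$ with $-iZ_{\alpha,-1}(E_C'[1])=-6$. Consequently, $\sigma_{\alpha,-1}^0$-semistability of $E_C'[1]$ is automatic, and the content of the statement is to rule out any proper subobject $A\hookrightarrow E_C'[1]$ in $\Coh^0_{\alpha,-1}$ of the same phase, i.e., with $Z_{\alpha,-1}(A)\in i\R_{>0}$.

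The core of the argument is the long exact sequence of $\Coh^{-1}$-cohomology associated to $0\to A\to E_C'[1]\to B\to 0$, which yields $H^0(B)=0$ together with
\[
0\to H^{-1}(A)\to E_C'\to H^{-1}(B)\to H^0(A)\to 0,
\]
where $H^{-1}(A),H^{-1}(B)\in \FF^0$ and $H^0(A)\in \TT^0$. Comparing the real and imaginary parts of $Z_{\alpha,-1}(A)=Z_{\alpha,-1}(H^0(A))-Z_{\alpha,-1}(H^{-1}(A))$ against the positivity conditions defining $\TT^0$ and $\FF^0$ should force $H^0(A)=0$ and $\mu_{\alpha,-1}(H^{-1}(A))=0$. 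The sequence then exhibits $H^{-1}(A)\hookrightarrow E_C'$ as a proper nonzero $\Coh^{-1}$-subobject of slope equal to $\mu_{\alpha,-1}(E_C')=0$, contradicting the strict $\sigma_{\alpha,-1}$-stability of $E_C'$ from Lemma~\ref{lemma:linecubicinKu}.

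The main obstacle I anticipate is the slope-matching step: a priori both $H^{-1}(A)$ and $H^0(A)$ may be nonzero, and the elimination of this mixed case requires combining the strict inequalities defining $\TT^0$ and $\FF^0$ with the exact equality $\Re Z_{\alpha,-1}(A)=0$. The pathological possibility of torsion subobjects with vanishing $Z_{\alpha,-1}$, supported in low dimension, is ruled out exactly as in the proof of Lemma~\ref{lemma:linecubicinKu} by noting that the relevant Chern characters do not lie in the lattice spanned by $\lambda_1$, $\lambda_2$ and $\ch^{-1}_{\BB_0,\le 2}(\BB_i)$ for $i=1,2,3$.
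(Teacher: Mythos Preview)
Your approach is essentially the paper's proof spelled out in detail: the paper simply notes that passing from $\sigma_{\alpha,-1}$ to $\sigma^0_{\alpha,-1}$ rotates the central charge by $-i$, so the wall structure is unchanged and the analysis of Section~\ref{section:1/4} already shows that $E_C'$ remains stable after the second tilt; stability then restricts to $\Ku(Y)$. Your subobject argument via the cohomology long exact sequence is the concrete mechanism underlying that one sentence.

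Your last paragraph, however, is incorrect. Objects with $\ch^{-1}_{\BB_0,\le 2}=(0,0,0)$ (sheaves supported on points) certainly have class in the lattice --- it is the zero class --- and the argument you cite (which is really the one in Proposition~\ref{stab_alphalarge}, not Lemma~\ref{lemma:linecubicinKu}) does not exclude them; such objects genuinely live in $\Coh^0_{\alpha,-1}(\P^3,\BB_0)$ with $Z_{\alpha,-1}=0$. The clean fix is to abandon the reduction to the ambient weak stability condition and test stability directly in the heart $\Coh^0_{\alpha,-1}(\P^3,\BB_0)\cap\Psi(\sigma^*\Ku(Y))$: there $-iZ_{\alpha,-1}$ is a \emph{genuine} stability function by \eqref{stabcond}, so every nonzero subobject $A$ has $Z_{\alpha,-1}(A)\ne 0$. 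Running your argument in this setting, the real-part comparison only yields $Z_{\alpha,-1}(H^0(A))=0$ (not $H^0(A)=0$), but this suffices: if $H^{-1}(A)=0$ then $Z_{\alpha,-1}(A)=Z_{\alpha,-1}(H^0(A))=0$, forcing $A=0$; otherwise $H^{-1}(A)$ is a proper nonzero subobject of $E_C'$ in $\Coh^{-1}(\P^3,\BB_0)$ of slope~$0$, contradicting the strict $\sigma_{\alpha,-1}$-stability established in Lemma~\ref{lemma:linecubicinKu}.
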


\begin{proof}
Note that by definition the stability function for $\sigma_{\alpha,-1}^0$ is $Z_{\alpha,-1}$ multiplied by $-\sqrt{-1}$. In particular, the new heart obtained through the second tilt is just the previous heart rotated by ninety degrees. It follows that the walls would correspond to those we have computed for $\sigma_{\alpha,-1}$ and the previous argument proves that these can be crossed preserving the stability of $E_C'$. This implies the stability of $E_C'$ with respect to $\sigma_{\alpha,-1}^0$. As the stability conditions $\sigma$ on $\Ku(Y)$ are induced from $\sigma_{\alpha,-1}^0$ for $\alpha<1/4$, and $F_C'$ is in the Kuznetsov component, we get the desired statement.
\end{proof}

Now we are able to describe the moduli space $M_\sigma(2\lambda_1+\lambda_2)$ of $\sigma$-stable objects with Mukai vector $2\lambda_1+\lambda_2$ and, in particular, its identification with the LLSvS eightfold $M_Y$ constructed in \cite{LLSvS}. We use a standard argument, which is very similar to \cite[Section 5.3]{LLMS}. We point out that the results in \cite{liurendapaper} implies that $M_\sigma(2\lambda_1+\lambda_2)$ is a smooth, projective, irreducible hyperk\"ahler eightfold.

\begin{thm}
\label{thm:mainmoduli}
The moduli space $M_\sigma(2\lambda_1+\lambda_2)$ parametrizes only objects of the form $F_C'$. Moreover, $M_\sigma(2\lambda_1+\lambda_2)$ is isomorphic to the LLSvS eightfold $M_Y$.
\end{thm}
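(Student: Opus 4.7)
The plan is to construct a morphism $M_3\to M_\sigma(2\lambda_1+\lambda_2)$ by $[C]\mapsto F_C'$, show it descends through the LLSvS contractions $M_3\twoheadrightarrow M_Y'\twoheadrightarrow M_Y$, and prove the induced morphism $\bar\phi\colon M_Y\to M_\sigma(2\lambda_1+\lambda_2)$ is an isomorphism; both assertions of the theorem then follow immediately. The argument closely parallels the one in \cite[Section 5.3]{LLMS}, which is now unconditional thanks to Theorem \ref{mainThm}.

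The classifying morphism is constructed from the universal twisted cubic family on $M_3\times Y$: the relative version of the defining evaluation map, followed by the relative right mutation $\R_{\OO_Y(-H)}$, yields a family $\FFF'\in \Db(M_3\times Y)$ whose fiber over $[C]$ is $F_C'$. By Theorem \ref{mainThm} each fiber is $\sigma$-stable of Mukai vector $2\lambda_1+\lambda_2$, so the (quasi-)universality of $M_\sigma(2\lambda_1+\lambda_2)$ produces a classifying morphism $\phi\colon M_3\to M_\sigma(2\lambda_1+\lambda_2)$. Factoring $\phi$ through $M_Y$ requires two verifications. First, the $\P^2$-fibers of $M_3\to M_Y'$, consisting of twisted cubics lying in a fixed three-dimensional linear space (hence in a fixed cubic surface $S\subset Y$), must be contracted: in the aCM case, the kernels $F_C=F_C'$ attached to curves $C$ in a common linear system on $S$ are all isomorphic because they all realize the same determinantal presentation of $S$; in the non-CM case, the Jordan--H\"older factors $M_C$ and $\BB_{-1}[1]$ identified at the wall $\alpha=3/4$ in Proposition \ref{3/4} depend only on $S$. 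Second, the divisor of non-CM twisted cubics in $M_Y'$ must be collapsed by $\phi$: the triangle $F_C'\to F_C\to \OO_Y(-H)[1]\oplus\OO_Y(-H)[2]$ strips precisely the non-CM data, identifying the image of this divisor with the Lagrangian copy $Y\hookrightarrow M_Y$.

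Both $M_Y$ and $M_\sigma(2\lambda_1+\lambda_2)$ are smooth, projective, irreducible hyperk\"ahler eightfolds, the first by \cite{LLSvS} and the second by \cite{liurendapaper}, so it suffices to prove that the induced morphism $\bar\phi$ is injective: its image is then closed of dimension eight, hence all of $M_\sigma(2\lambda_1+\lambda_2)$ by irreducibility (this yields the first assertion of the theorem), and a bijective morphism between smooth projective irreducible varieties of the same dimension is an isomorphism, via Zariski's main theorem applied to the normal target. Injectivity on the open aCM stratum $M_Y\setminus Y$ is straightforward: the sheaf $F_C=F_C'$ determines the quotient $\II_{C/S}(2H)$ through its defining exact sequence, and hence the cubic surface $S$ together with the determinantal datum that specifies the aCM point of $M_Y'\setminus D\cong M_Y\setminus Y$. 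The main obstacle will be injectivity along the contracted Lagrangian $Y\subset M_Y$: one must show that distinct points $y\in Y$ yield non-isomorphic projections $F_C'$, which we expect to accomplish by matching $\bar\phi|_Y$ with an independently known embedding of $Y$ into $M_\sigma(2\lambda_1+\lambda_2)$ provided by the LLSvS identification of points of $Y$ with certain degenerate (non-CM) twisted cubics.
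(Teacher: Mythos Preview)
Your overall strategy --- construct $\phi\colon M_3 \to M_\sigma(2\lambda_1+\lambda_2)$ from a relative projection of the universal family, then identify its fibers with those of $M_3 \to M_Y$ --- is exactly the paper's. The difference is that the paper does not attempt to verify the fiber identification by hand: it invokes \cite[Proposition 2]{AddingtonNicolas2014}, which proves that $F_{C_1}' \cong F_{C_2}'$ if and only if $[C_1] = [C_2]$ in $M_Y$. This single citation simultaneously yields the factorization of $\phi$ through $M_Y$ and the injectivity of the induced map $\bar\phi$; surjectivity is obtained separately from projectivity of $M_3$ and irreducibility of the target.

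Your attempt to argue the fiber identification directly has two genuine gaps. First, in the non-CM case you claim the $\P^2$-fibers collapse because the Jordan--H\"older factors $M_C$ and $\BB_{-1}[1]$ at the wall $\alpha=3/4$ ``depend only on $S$''; even granting this, it does not determine the object $E_C'$, which is a particular extension class in $\Ext^1(M_C,\BB_{-1}[1])$, and you give no argument that this class is constant along the fiber. Second, you explicitly leave injectivity along the Lagrangian $Y \subset M_Y$ unproven, proposing only to match $\bar\phi|_Y$ with an unspecified ``independently known embedding''. Both issues are precisely what the Addington--Lehn result supplies; without citing it or genuinely replacing it, the proof is incomplete.
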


\begin{proof}
Let $M_3$ be the irreducible component of the Hilbert scheme parameterizing twisted cubic curves on $Y$. Then there exists a quasi-universal family $\FF$ on $Y \times M_3$ parameterizing the sheaves $\II_{C/Y}(2H)$. By \cite[Theorem 5.8]{Kuznetsov2011Base_change}, we have a semiorthogonal decomposition of the form
\[\Db(Y \times M_3)=\langle\Ku(Y \times M_3),\OO_Y\boxtimes\Db(M_3), \OO_Y(H)\boxtimes\Db(M_3),\OO_Y(2H)\boxtimes\Db(M_3) \rangle.\]
Now consider the relative projection $\FF'$ of $\FF$ in $\Ku(Y \times M_3)$. As in \cite{AddingtonNicolas2014}, it is possible to verify that the projection of $\II_{C/Y}(2H)$ in the Kuznetsov component is exactly $F_C'$ (see Section \ref{subsec:derTor} for the computation in the non CM case). So, Theorem \ref{mainThm} implies that $\FF'$ is a quasi-universal family of $\sigma$-stable objects $F_C'$ in $\Ku(Y)$. Then there is an induced dominant morphism $M_3 \to M_\sigma(2\lambda_1+\lambda_2)$. As $M_3$ is projective, we know that this morphism is surjective. This concludes the first statement.

For the second statement, we just need to show that for two twisted cubic curves $C_1$ and $C_2$, we have $F_{C_1}'=F_{C_2}'$ if and only if $C_1$ and $C_2$ are contained in the same fiber of the morphism $M_3\to M_Y$ constructed in \cite{LLSvS}. This is exactly proved in \cite[Proposition 2]{AddingtonNicolas2014}. Indeed, they consider the projection in the K3 subcategory $\langle \OO_Y(-H), \OO_Y, \OO_Y(H) \rangle^{\perp}$, which is equivalent to $\Ku(Y)$. This ends the proof of the theorem.
\end{proof}

\section{Fano varieties of lines and stability conditions}
\label{section:lines}

In this section, we use a similar argument to that applied in the case of twisted cubic curves in order to describe the Fano variety $F_Y$ parametrizing lines in a cubic fourfold $Y$ as a moduli space of Bridgeland stable objects. 

Recall that given a line $\ell$ in $Y$, we can associate an object $P_\ell$ in $\Ku(Y)$, which sits in the distinguished triangle
\[\OO_Y(-H)[1] \to P_\ell \to \II_\ell,\]
where $\II_\ell$ denotes the ideal sheaf of $\ell$ in $Y$ (see \cite[Section 6.3]{LLMS}). It is easy to compute that the Mukai vector of $P_\ell$ is $\lambda_1+\lambda_2$.

By Proposition \ref{prop:change_line}, we can assume that the line $L$ used in the construction of stability conditions is disjoint from $\ell$. Let us compute explicitly the image $M_\ell=\Psi(\sigma^*P_\ell)$ in $\Db(\P^3,\BB_0)$. By \cite[Proposition 7.7]{BLMS:kuzcomponent}, we have that
\[\Psi(\OO_{\tilde Y}(-H))=\BB_{-1}.\]
On the other hand, we consider the sequence
\[\II_\ell\to \OO_Y \to \OO_\ell.\]
We recall that
\[\Psi(\OO_{\tilde Y})=0.\]
By our assumption, we know that $\ell$ maps isomorphically to a line in $\P^3$; hence we have that
\[\Psi(\sigma^*\II_\ell)=\Psi(\sigma^*\OO_\ell)[-1]=\pi_*(\EE(h)|_{\sigma^{-1}(\ell)})\]
is a torsion sheaf supported over the image of $\ell$ in $\P^3$. We denote it by $\EE_\ell$. So we have the distinguished triangle 
\begin{equation}
\label{sequenceMl}
\BB_{-1}[1]\to M_\ell \to \EE_\ell
\end{equation}
in $\Db(\P^3,\BB_0)$.

Note that
$$\ch_{\BB_0,\leq 2}^{-1}(M_\ell)=(-4,3,\frac{7}{8}).$$
The following lemma gives us the starting point of the wall crossing argument.
\begin{lem}
\label{lemma:stablineontop}
The object $M_\ell$ is $\sigma_{\alpha,-1}$-stable for $\alpha\gg 0$.
\end{lem}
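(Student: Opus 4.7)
The plan is to mirror the strategy of Proposition \ref{stab_alphalarge}, exploiting the short exact sequence \eqref{sequenceMl}. I first verify that $M_\ell$ lies in the heart $\Coh^{-1}(\P^3,\BB_0)$: using the formula $\chbo(\BB_i)/\rk(\BB_i)=i/2-1/4$ together with the relation $\chbo=\ch_{\BB_0,1}+\rk$ coming from the $e^{h}$-twist for $\beta=-1$, the untwisted slope of $\BB_{-1}$ equals $-7/4<-1$, so $\BB_{-1}\in\FF^{-1}$ and $\BB_{-1}[1]\in\Coh^{-1}$. Since $\EE_\ell$ is torsion it lies in $\TT^{-1}\subset\Coh^{-1}$; taking cohomology of \eqref{sequenceMl} with respect to this heart places $M_\ell$ in $\Coh^{-1}$ and the triangle becomes a short exact sequence $0\to \BB_{-1}[1]\to M_\ell\to \EE_\ell\to 0$.

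Next, the key inputs are the stability of $\BB_{-1}[1]$ (which has $\dlt=0$, hence is $\sigma_{\alpha,-1}$-stable for every $\alpha>0$) together with the phase comparison $\mu_{\alpha,-1}(\BB_{-1}[1])<\mu_{\alpha,-1}(M_\ell)$, valid for all $\alpha>0$, and the fact that $\EE_\ell$ has $\Im Z_{\alpha,-1}(\EE_\ell)=0$ with $\Re Z_{\alpha,-1}(\EE_\ell)=-2<0$, giving phase $1$. To rule out destabilizing subobjects, let $F\hookrightarrow M_\ell$ be a proper subobject in the heart and set $Q=M_\ell/F$. Applying $\Hom(-,Q)$ to \eqref{sequenceMl}: whenever $Q$ lies in $\TT^{-1}$, the vanishing $\Hom(\BB_{-1}[1],Q)=\Ext^{-1}(\BB_{-1},Q)=0$ forces any surjection $M_\ell\twoheadrightarrow Q$ to factor through $\EE_\ell$, so $Q$ is a quotient of the torsion sheaf $\EE_\ell$. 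In particular, $\rk(Q)=0$ and $\ch^{-1}_{\BB_0,1}(Q)=0$, so the subobject $F$ satisfies $\rk(F)=-4$ and $\ch^{-1}_{\BB_0,1}(F)=3$. The support property applied to the torsion quotient $Q$ gives $\ch^{-1}_{\BB_0,2}(Q)\geq 0$ with strict inequality for $Q\neq 0$; consequently $\ch^{-1}_{\BB_0,2}(F)<7/8=\ch^{-1}_{\BB_0,2}(M_\ell)$, and since $F$ and $M_\ell$ share the same $\Im Z$, I obtain $\mu_{\alpha,-1}(F)<\mu_{\alpha,-1}(M_\ell)$, contradicting the destabilization.

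The main obstacle will be handling quotients $Q$ which do not lie in $\TT^{-1}$, namely those with $H^{-1}_{\Coh}(Q)\neq 0$, for which the Hom-vanishing above can fail. For this I plan to use the large-$\alpha$ regime directly: the leading coefficient of $\mu_{\alpha,-1}$ equals $-\rk/(2\ch^{-1}_{\BB_0,1})$, and destabilization as $\alpha\to\infty$ requires $-3\rk(F)\geq 4\ch^{-1}_{\BB_0,1}(F)$; combined with the subobject bound $\ch^{-1}_{\BB_0,1}(F)\leq 3$ and the parity condition $\ch^{-1}_{\BB_0,1}(F)\equiv \rk(F)/4 \pmod 2$ coming from the image lattice of $\ch^{-1}_{\BB_0,\leq 2}$ (spanned by the classes of $\BB_1,\BB_2,\BB_3,\lambda_1,\lambda_2$), this restricts to finitely many numerical classes, the borderline among them being the classes forcing $\rk(F)\leq -8$. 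For these the quotient $Q$ has $\rk(Q)\geq 4$ and is a torsion-free sheaf in $\TT^{-1}$, and so falls back into the case already treated. Assembling these ingredients excludes all potential destabilizers, proving that $M_\ell$ is $\sigma_{\alpha,-1}$-stable for $\alpha\gg 0$.
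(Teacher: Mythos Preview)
Your Case 1 is essentially sound, modulo one slip: the support property $\dlt\geq 0$ is vacuous for a class with $\rk=\ch^{-1}_{\BB_0,1}=0$ and does not yield $\ch^{-1}_{\BB_0,2}(Q)>0$. What you actually need is that a nonzero quotient sheaf of $\EE_\ell$ is either supported on the line (whence $\ch^{-1}_{\BB_0,2}(Q)>0$) or on points (whence $Z(Q)=0$ and $\mu_{\alpha,-1}(Q)=+\infty$, so the sequence is not destabilizing).

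The genuine gap is Case 2. Your constraints $-3\rk(F)\geq 4\ch^{-1}_{\BB_0,1}(F)$, $0<\ch^{-1}_{\BB_0,1}(F)\leq 3$, and the parity congruence do \emph{not} force $\rk(F)\leq -8$: for instance $\rk(F)=-4$, $\ch^{-1}_{\BB_0,1}(F)=1$ satisfies all of them. Moreover the conclusion ``$Q$ is a torsion-free sheaf in $\TT^{-1}$'' flatly contradicts the Case 2 hypothesis $H^{-1}(Q)\neq 0$, and positive rank in the tilted heart does not imply $H^{-1}(Q)=0$ anyway. So the possibility $\rk(F)=-4$ with $Q$ not a sheaf is left completely untreated, and this is precisely the case that carries all the content.

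The paper's argument avoids this by taking the long exact cohomology sequence of $0\to P\to M_\ell\to Q\to 0$ with respect to the standard $t$-structure, using $H^{-1}(M_\ell)=\BB_{-1}$ and $H^0(M_\ell)=\EE_\ell$. From $H^{-1}(P)\hookrightarrow\BB_{-1}$ one gets $\rk(P)\geq -4$ immediately, which already excludes $\rk(P)\leq -8$. In the remaining case $\rk(P)=-4$ the quotient $Q$ has rank $0$; since $Q$ is $\sigma_{\alpha,-1}$-semistable for $\alpha\gg 0$, one deduces $H^{-1}(Q)=0$ (otherwise $H^{-1}(Q)[1]\subset Q$ would destabilize it), hence $H^{-1}(P)=\BB_{-1}$ and $H^0(P)\hookrightarrow\EE_\ell$. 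A short case split on the rank of $H^0(P)$ as a sheaf on the line then finishes: rank $2$ gives $\mu(Q)=+\infty$; rank $1$ gives $\ch^{-1}_{\BB_0,\leq 2}(P)=(-4,3,-1/8)$, whose slope is strictly below that of $M_\ell$; rank $0$ is similar. Replace your lattice/parity attempt in Case 2 with this cohomology-sequence argument.
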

\begin{proof}
Assume that $M_\ell$ is not stable with respect to $\sigma_{\alpha,-1}$ for $\alpha\gg 0$. Then there is a destabilizing sequence of $M_\ell$
$$P \rightarrow M_\ell \rightarrow Q$$
in the heart $\Coh^{-1}(\P^3,\BB_0)$, where $P$, $Q$ are $\sigma_{\alpha,-1}$-semistable for $\alpha\gg 0$, and $\mu_{\alpha,-1}(P) > \mu_{\alpha,-1}(Q)$. We have two possibilities for $P$: either it is torsion or it has rank equal to $-4$. If we are in the first case, then, for $\alpha$ going to infinity, the slope $\mu_{\alpha,-1}(P)$ is a finite number, while $\mu_{\alpha,-1}(Q)=+\infty$. Thus such a $P$ cannot destabilize $M_{\ell}$.

In the case $\rk(P)=-4$, let us consider the cohomology sequence
$$0 \rightarrow \HH^{-1}(P) \rightarrow \HH^{-1}(M_\ell) \rightarrow \HH^{-1}(Q) \rightarrow \HH^{0}(P) \rightarrow \HH^{0}(M_\ell) \rightarrow \HH^{0}(Q) \rightarrow 0.$$
By \eqref{sequenceMl} we have that $\HH^{-1}(M_\ell)= \BB_{-1}$ and $\HH^{0}(M_\ell)= \EE_{\ell}$. Also, we know that $\HH^{-1}(Q)=0$, because $Q$ is a torsion element in the heart. It follows that $\HH^{-1}(P)= \BB_{-1}$ and we have the sequence
$$0 \to \HH^0(P) \to \EE_{\ell} \to \HH^0(Q) \to 0.$$

We recall that $\EE_{\ell}$ is a rank two torsion free sheaf over its support.  Since $\HH^0(P)$ is a subsheaf of $\EE_\ell$, it has the same support. There are three cases: If $\HH^0(P)$ has the same rank of $\EE_\ell$ as a sheaf on its support, then 
\[\chbl(P)=\chbl(M_{\ell}),\]
and $\mu_{\alpha,-1}(Q)=+\infty$, so it is not a destabilizing sequence. The second possibility is that $\HH^0(P)$ has rank 1 and it is torsion free as a sheaf over a line. In this case, we have $\ch_{\BB_0,\leq 2}^{-1}(P)=(-4,3,-1/8)$, whose slope $\mu_{\alpha,-1}$ is less than that of $M_{\ell}$. The third case when $\HH^0(P)=0$ is similar. This proves the stability of $M_\ell$ for $\alpha$ big enough. 
\end{proof}

Now an easy computation using the four conditions listed at the beginning of Section \ref{sec:potential_walls} shows that the only potential wall for $M_\ell$ is given by $\alpha_0=\frac{\sqrt {5}}4$. In the following lemma, we prove that $M_\ell$ remains stable after crossing this wall.

\begin{lem}[The wall for a line]\label{lemma:wallforline}
Let $\alpha_0\geq \frac{\sqrt {5}}4$. If $E$ is a $\sigma_{\alpha_0,-1}$-stable object in $\Psi(\sigma^*\Ku(Y))$ such that $\chbl(E)=(-4,3,\frac{7}8)$, then $E$ is $\sigma_{\alpha,-1}$-stable for any $\alpha>0$.
\end{lem}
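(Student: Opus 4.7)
My plan is to adapt the wall-crossing argument of Lemma~\ref{lemma:secondwallforcubic} and Lemma~\ref{lemma:linecubicinKu} to the present situation. Suppose, for contradiction, that $E$ fails to be $\sigma_{\alpha,-1}$-stable for some $\alpha\in(0,\alpha_0)$, and consider the Harder--Narasimhan filtration
\[0\subset E_1\subset\cdots\subset E_k=E\]
of $E$ in $\Coh^{-1}(\P^3,\BB_0)$ with respect to $\sigma_{\alpha-\epsilon,-1}$, so that the slopes $\mu_{\alpha-\epsilon,-1}(E_i/E_{i-1})$ strictly decrease. The first goal is to show that every HN factor lies in $\Psi(\sigma^*\Ku(Y))$.

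A direct computation gives $\mu_{\alpha,-1}(E)=(7+16\alpha^2)/24$. Positioning this slope between those of $\BB_s[1]$ for $s\le 0$ and $\BB_s$ for $s\ge 1$, and using Serre duality $S(-)=(-)\otimes_{\BB_0}\BB_{-3}[3]$ together with the $\sigma_{\alpha,-1}$- and $\sigma_{\alpha-\epsilon,-1}$-semistability of $E_{k-1}$ and $E_k/E_{k-1}$ (exactly as in Lemma~\ref{lemma:secondwallforcubic}), one obtains
\[\Hom(\BB_s,E_{k-1}[j])=0=\Hom(\BB_s,E_k/E_{k-1}[j])\]
for $s=1,2,3$ and every $j\neq 1$. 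Because $E\in\Psi(\sigma^*\Ku(Y))$, one has $\chi(\BB_s,E)=0$; additivity then yields $\chi(\BB_s,E_{k-1})+\chi(\BB_s,E_k/E_{k-1})=0$, and since each summand equals $-\hom(\BB_s,-[1])\le 0$, both must vanish. Hence the remaining $\Hom$ groups in degree $1$ also vanish, placing $E_{k-1}$ and $E_k/E_{k-1}$ in $\Psi(\sigma^*\Ku(Y))$.

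For the second step I exploit the lattice structure. The image of $\chbl$ on $\Psi(\sigma^*\Ku(Y))$ is the sublattice spanned by the images of $\lambda_1$ and $\lambda_2$, namely $(4,3,-7/8)$ and $(-8,0,7/4)$; it therefore consists of characters of the form $(x,y,-\tfrac{7}{32}x)$. Imposing the wall equality $\mu_{\alpha,-1}(E_{k-1})=\mu_{\alpha,-1}(E)$ on such a character, the $\alpha$-dependence cancels and collapses to the single linear relation $y=-\tfrac{3}{4}x$. Combined with the heart conditions $0\le\chbo(E_{k-1})\le 3$ and with integrality in the sublattice, the only admissible solutions are $\chbl(E_{k-1})=(0,0,0)$ or $\chbl(E_{k-1})=\chbl(E)=(-4,3,7/8)$. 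The second case produces a proper nonzero subobject of $E$ with the same slope as $E$ for every $\alpha$, directly contradicting $\sigma_{\alpha_0,-1}$-stability; the first case would force a numerically trivial factor, which cannot be an HN factor. This yields the desired contradiction and establishes $\sigma_{\alpha,-1}$-stability for all $\alpha>0$.

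The main obstacle is carrying out the slope-ordering step of the second paragraph cleanly: one has to verify the position of $\mu_{\alpha,-1}(E)$ relative to $\mu_{\alpha,-1}(\BB_s)$ and $\mu_{\alpha,-1}(\BB_s[1])$ for each relevant $s$, and the analogous comparison at $\alpha-\epsilon$, throughout the interval $(0,\alpha_0]$. Edge cases where $\chbo$ of a subfactor vanishes must be treated separately, but within the sublattice the characters with $\chbo=0$ are precisely the integer multiples of $(-8,0,7/4)$, none of which (apart from zero) satisfies $y=-\tfrac{3}{4}x$; this rules them out as potential destabilizers.
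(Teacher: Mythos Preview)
Your adaptation of Lemma~\ref{lemma:secondwallforcubic} breaks down at the slope-ordering step, and this is not a minor technicality but a genuine failure of the method. At the only actual wall $\alpha=\tfrac{\sqrt5}{4}$ (where $\mu_{\alpha,-1}(E)=\tfrac12$) one computes
\[
\mu_{\alpha,-1}(\BB_1)=-\tfrac12,\qquad \mu_{\alpha,-1}(\BB_2)=\tfrac16,\qquad \mu_{\alpha,-1}(\BB_0[1])=\mu_{\alpha,-1}(\BB_3)=\tfrac12,
\]
so the sandwich ``$\mu(\BB_s[1])<\mu(E)<\mu(\BB_t)$ for $s\le 0$, $t\ge 1$'' that you invoke is simply false here. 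In particular you cannot deduce $\Hom(\BB_2,E_k/E_{k-1})=0$ by comparing slopes, and the chain of vanishings needed to force the HN factors into $\Psi(\sigma^*\Ku(Y))$ does not go through. Worse, the conclusion itself is wrong: the potential destabilising decomposition at this wall has characters $(0,2,1)$ and $(-4,1,-\tfrac18)$, neither of which lies in the sublattice $\{(x,y,-\tfrac{7}{32}x)\}$. So the HN factors are \emph{not} in $\Psi(\sigma^*\Ku(Y))$, and your second step, while internally correct, is applied to a false premise.

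The paper's argument is different and much shorter. One first checks (as in Section~\ref{sec:potential_walls}) that the only possible wall is $\alpha=\tfrac{\sqrt5}{4}$, with destabilising quotient of character $(-4,1,-\tfrac18)$. This character has $\dlt=0$, so Lemma~\ref{lem:dlt0obj} identifies the quotient as $\BB_0[1]$. But then the quotient map gives $\Hom(E,\BB_0[1])\neq 0$, which by Serre duality means $\Hom(\BB_3,E[2])\neq 0$, contradicting $E\in\Psi(\sigma^*\Ku(Y))$. The point is that the wall here is governed by a discriminant-zero object \emph{outside} the Kuznetsov component, which is precisely the situation Lemma~\ref{lem:dlt0obj} is designed to handle and which the Lemma~\ref{lemma:secondwallforcubic} template cannot.
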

\begin{proof}
A direct computation and \cite[Lemma 3.9]{BMS:stabCY3s} imply that the object $E$ can be strictly semistable only with respect to $\sigma_{\frac{\sqrt 5}4,-1}$. If this happens, the Harder-Narasimham filtration of $E$  with respect to $\sigma_{\frac{\sqrt 5}4-\epsilon,-1}$ would be of the form
\[0\subset E_1\subset E\]
with $\chbl(E_1)=  (0,2,1)$ and $\chbl(E/E_1)=(-4,1,-\frac{1}8)$. By Lemma \ref{lem:dlt0obj}, we have that $E/E_1\simeq \BB_0[1]$. In particular, we get $$\Hom(\BB_3,E[3])=(\Hom(E,\BB_0))^*\neq 0,$$ which contradicts to the assumption that $E$ is in $\Psi(\sigma^*\Ku(Y))$. This proves the stability of $E$ as claimed.
\end{proof}

\begin{proof}[Proof of Theorem \ref{thm:Fano_main}]
The first part is a consequence of Lemma \ref{lemma:stablineontop} and Lemma \ref{lemma:wallforline}. The second part follows from the same argument explained in Section \ref{sec:twisted_final} for twisted cubics. We point out that by projecting the universal family, we get an isomorphism from $F_Y$ to $M_\sigma(\lambda_1+\lambda_2)$. Hence the projectivity of $M_\sigma(\lambda_1+\lambda_2)$ follows from that of $F_Y$, without using the result in \cite{liurendapaper}.
\end{proof}

\section{Applications}
\label{section:applications}

In this section we discuss some applications of Theorem \ref{thm:Fano_main} and Theorem \ref{thm:maintwistedcubic}, concerning the categorical version of Torelli Theorem and the derived Torelli Theorem for cubic fourfolds. We also explain the identification of the period point of $M_Y$ with that of $F_Y$.  

\subsection{Torelli Theorem for cubic fourfolds}

In the Appendix of \cite{BLMS:kuzcomponent} the authors gave a different proof of the categorical version of Torelli Theorem for cubic fourfolds introduced in \cite{HuyRen:Torelliforcubicfourfold}, in the case that the algebraic Mukai lattice does not contain $(-2)$-classes, e.g.\ for very general cubic fourfolds. In particular, they deduce the classical version of Torelli Theorem for cubic fourfolds. The key point of their proof is the interpretation of the Fano variety of lines on a very general cubic fourfold as a moduli space of Bridgeland stable objects in the Kuznetsov component.

As a direct consequence of Theorem \ref{thm:Fano_main}, we are able to reprove the categorical formulation of Torelli Theorem for cubic fourfolds without the generality assumption. We recall that the degree shift functor of a cubic fourfold $Y$ is the autoequivalence $(1)$ of $\Ku(Y)$ given by the composition of the tensor product with the line bundle $\OO_Y(1)$ and the projection to $\Ku(Y)$.
\begin{cor}
\label{cor:Torellicat}
Two cubic fourfolds $Y$ and $Y'$ are isomorphic if and only if there is an equivalence between $\Ku(Y)$ and $\Ku(Y')$, whose induced map on the algebraic Mukai lattices commutes with the action of the degree shift functor $(1)$. 
\end{cor}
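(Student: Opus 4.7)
The ``only if'' direction is straightforward: an isomorphism $f\colon Y \xrightarrow{\sim} Y'$ induces $f^*\colon \Db(Y')\to \Db(Y)$ preserving $\OO_Y(H)$, hence the semiorthogonal decomposition; its restriction to $\Ku$ commutes with the degree shift $(1)$.

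For the ``if'' direction, the plan is to follow the argument given in the Appendix of \cite{BLMS:kuzcomponent} verbatim, with Theorem \ref{thm:Fano_main} supplying, \emph{unconditionally} on $Y$, the moduli interpretation $F_Y \simeq M_\sigma(\lambda_1+\lambda_2)$ that drives the proof (whereas in loc.\ cit.\ this input was available only for very general $Y$). Given $\Phi\colon \Ku(Y)\xrightarrow{\sim} \Ku(Y')$ whose induced map on algebraic Mukai lattices commutes with $(1)$, the argument proceeds in three steps. First, exploit the commutation: since $(1)$ acts on the $A_2$-sublattice $\langle\lambda_1,\lambda_2\rangle$ by a specific isometry, $\Phi$ must send this sublattice to $\langle\lambda_1',\lambda_2'\rangle$, and after possibly composing with a natural autoequivalence of $\Ku(Y')$ commuting with $(1)$, we may arrange $\Phi(\lambda_1+\lambda_2) = \lambda_1'+\lambda_2'$. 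Second, transport the stability condition: the pushforward $\Phi_*\sigma$ lies in the distinguished connected component of $\mathrm{Stab}(\Ku(Y'))$, and acting by $\widetilde{GL}^+(2,\R)$ matches it with $\sigma'$ insofar as stability of objects of class $\lambda_1'+\lambda_2'$ is concerned. Third, apply Theorem \ref{thm:Fano_main} on both sides to obtain an isomorphism $F_Y \xrightarrow{\sim} F_{Y'}$; tracking the polarization through the Mukai lattice isometry $\widetilde\Phi$ shows this is an isomorphism of polarized hyperk\"ahler fourfolds, and Voisin's global Torelli theorem for cubic fourfolds then yields $Y \simeq Y'$.

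The main obstacle is the second step, i.e.\ showing that the pushforward $\Phi_*\sigma$ lies in the ``geometric'' $\widetilde{GL}^+(2,\R)$-orbit of $\sigma'$. In the very general situation of \cite{BLMS:kuzcomponent}, the absence of spherical objects in $\Ku(Y)$ makes this nearly automatic; in our more general setting, one must invoke the explicit construction of $\sigma$ and $\sigma'$ recalled in Section \ref{section:constructstabonku}, together with connectedness properties of the stability manifold, and may need to absorb spherical twists around $(-2)$-classes into the initial choice of autoequivalence of $\Ku(Y')$ from the first step. A subsidiary, but still delicate, point is verifying in the third step that the induced bijection $F_Y \to F_{Y'}$ is compatible with the Pl\"ucker polarization: this follows from the fact that the polarization of the moduli space is determined by the Mukai vector together with $\widetilde\Phi$, but it requires the compatibility between the categorical and the classical polarization of $F_Y$ established via the Mukai morphism.
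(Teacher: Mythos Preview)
Your approach is exactly the one the paper takes: the paper gives no proof of this corollary beyond the sentence preceding it, which asserts that Theorem \ref{thm:Fano_main} supplies, for \emph{all} cubic fourfolds, the moduli interpretation $F_Y\simeq M_\sigma(\lambda_1+\lambda_2)$ needed to run the argument of the Appendix of \cite{BLMS:kuzcomponent} verbatim. Your three-step outline is an accurate unpacking of that referenced argument, and you are in fact more explicit than the paper about the one genuine subtlety (matching $\Phi_*\sigma$ with $\sigma'$ in the possible presence of $(-2)$-classes), which the paper leaves entirely to the cited reference.
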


\subsection{Period point of $M_Y$}

In this section we discuss the relation between the period point of the LLSvS eightfold $M_Y$ associated to a cubic fourfold $Y$ and the period point of $Y$.

As observed in \cite{Debarre-Macri:periodHK}, Example 6.4, the period point of $Y$ is identified with the period point of the Fano variety $F_Y$. More precisely, let ${ }^{2}\MM_6^{(2)}$ be the moduli space of smooth projective hyperk\"ahler fourfolds with a fixed polarization class of degree $6$ and divisibility $2$, deformation equivalent to the Hilbert square of a K3 surface. The Fano variety $F_Y$ with the Pl\"ucker polarization is an element in ${ }^{2}\MM_6^{(2)}$. Let
$$^2p_6^{(2)}: { }^2\MM_6^{(2)} \rightarrow { }^2\PP_6^{(2)}$$
be the period map which is an open embedding by Verbitsky's Torelli Theorem (see \cite{Verbitsky:torelli}). 

Let $H^*(\Ku(Y),\Z)$ be the Mukai lattice of $Y$, which is the orthogonal complement to the lattice spanned by the classes of $\OO_Y$, $\OO_Y(H)$ and $\OO_Y(2H)$ in the topological K-theory of $Y$, with respect to the Euler pairing.
We recall that the embedding of Hodge structures
$$H^2(F_Y,\Z) \rightarrow \langle \lambda_1 \rangle^{\perp} \subset H^*(\Ku(Y),\Z),$$
identifies the polarization class with $\lambda_1+2 \lambda_2$ and $H^2(F_Y,\Z)_{\text{prim}}$ is Hodge isometric to $\langle \lambda_1,\lambda_1 + 2 \lambda_2 \rangle^{\perp}$ (see \cite[Proposition 7]{Addington:rational}).

Using the same notations of \cite{Debarre-Macri:periodHK}, let ${ }^{4}\MM_2^{(2)}$ be the moduli space of smooth projective hyperk\"ahler eightfolds with a fixed polarization class of degree $2$ and divisibility $2$, deformation equivalent to the Hilbert scheme of points of length four on a K3 surface. Let
$$^{4}p_{2}^{(2)}: { }^{4}\MM_2^{(2)} \rightarrow { }^{4}\PP_2^{(2)}$$
be the period map of these eightfolds.

By a direct computation it is possible to show that $M_Y$ carries a natural polarization class of degree $2$ and divisibility $2$. Actually, as observed in \cite{LLMS}, Lemma 3.7, the eightfold $M_Y$ admits a natural antisymplectic involution $\tau$ whose fixed locus contains the cubic fourfold $Y$. Thus, $M_Y$ with the fixed polarization is an element of ${ }^{4}\MM_2^{(2)}$. 

\begin{prop}
\label{prop:period$M_Y$}
Given a cubic fourfold $Y$, We have that
$$^2p_6^{(2)}(F_Y)={ }^4p_2^{(2)}(M_Y)$$
and 
they coincide with the period point of $Y$.
\end{prop}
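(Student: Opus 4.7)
The plan is to apply Theorems \ref{thm:Fano_main} and \ref{thm:maintwistedcubic} to realize both $F_Y$ and $M_Y$ as moduli spaces of Bridgeland stable objects in the K3 category $\Ku(Y)$, and then to invoke the Mukai-type Hodge isometry for such moduli spaces to identify both primitive second cohomologies with the same sublattice of $H^*(\Ku(Y),\Z)$, namely the orthogonal of the $A_2$ lattice $\langle\lambda_1,\lambda_2\rangle$. Since this lattice carries the Hodge structure transported from the primitive fourth cohomology of $Y$, the period points of $F_Y$ and $M_Y$ both coincide with that of $Y$.

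First I would invoke the Mukai-type Hodge isometry $\theta_v:v^{\perp}\xrightarrow{\sim}H^2(M_\sigma(v),\Z)$ for every primitive Mukai vector $v\in H^*(\Ku(Y),\Z)$ with $\langle v,v\rangle>0$, and apply it to $v=\lambda_1+\lambda_2$ (so that $M_\sigma(v)\cong F_Y$ by Theorem \ref{thm:Fano_main}, of dimension $4$) and to $v=2\lambda_1+\lambda_2$ (so that $M_\sigma(v)\cong M_Y$ by Theorem \ref{thm:maintwistedcubic}, of dimension $8$). This identifies $H^2(F_Y,\Z)$ with $(\lambda_1+\lambda_2)^{\perp}$ and $H^2(M_Y,\Z)$ with $(2\lambda_1+\lambda_2)^{\perp}$, as lattices with Hodge structures pulled back from $H^*(\Ku(Y),\Z)$.

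The next step is to identify the two polarization classes inside the $A_2$ lattice $\langle\lambda_1,\lambda_2\rangle$. On the $F_Y$ side, the Pl\"ucker polarization corresponds, after adjusting for the autoequivalence relating the conventions $v=\lambda_1$ and $v=\lambda_1+\lambda_2$, to a class of Mukai square $6$ orthogonal to $\lambda_1+\lambda_2$, which is pinned down to $\pm(\lambda_1-\lambda_2)$. On the $M_Y$ side, the only primitive class in the $\Q$-span of $\lambda_1,\lambda_2$ orthogonal to $2\lambda_1+\lambda_2$, of Mukai square $2$ and of divisibility $2$ inside $(2\lambda_1+\lambda_2)^{\perp}$, is $\pm\lambda_2$; its geometric identification as the polarization of $M_Y$ can be confirmed either via the Bayer--Macr\`i nef divisor associated with $\sigma$, or from the contraction description of \cite{LLSvS} combined with the numerical constraints $(h^2=2,\ \mathrm{div}(h)=2)$ and intersection with the Lagrangian $Y\hookrightarrow M_Y$.

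Since the $\Q$-spans of $\{\lambda_1+\lambda_2,\lambda_1-\lambda_2\}$ and $\{2\lambda_1+\lambda_2,\lambda_2\}$ both equal the $\Q$-span of $\{\lambda_1,\lambda_2\}$, their orthogonals in $H^*(\Ku(Y),\Z)$ both coincide with the saturated lattice $\langle\lambda_1,\lambda_2\rangle^{\perp}$. Hence $H^2(F_Y,\Z)_{\mathrm{prim}}$ and $H^2(M_Y,\Z)_{\mathrm{prim}}$ are canonically Hodge-isometric to this common sublattice, and the period points of $F_Y$ and $M_Y$ are identified under the natural identification of the two period domains $^2\PP_6^{(2)}$ and $^4\PP_2^{(2)}$. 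Since this lattice carries the Hodge structure transported from the primitive $H^4(Y,\Z)$, both period points coincide with that of $Y$, as recalled in \cite[Example 6.4]{Debarre-Macrì:periodHK}. The main obstacle is the concrete identification of the polarization on $M_Y$ with $\pm\lambda_2$: the numerical constraints restrict the candidate classes to a unique pair inside the rational span of $\lambda_1,\lambda_2$, but matching them with the actual geometric polarization requires either the Bayer--Macr\`i machinery applied to $\sigma$, or a direct intersection-theoretic computation exploiting the antisymplectic involution $\tau$ and its fixed locus containing $Y$.
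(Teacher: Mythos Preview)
Your proposal is correct and follows essentially the same argument as the paper: invoke the Mukai-type Hodge isometry $H^2(M_\sigma(v),\Z)\cong v^{\perp}$ from \cite{liurendapaper} for the two moduli spaces, locate the polarization classes inside the $A_2$ sublattice $\langle\lambda_1,\lambda_2\rangle$, and conclude that both primitive second cohomologies are Hodge isometric to $\langle\lambda_1,\lambda_2\rangle^{\perp}$. The only cosmetic differences are that the paper quotes \cite[Proposition~7]{Addington:rational} for the $F_Y$ side (using Mukai vector $\lambda_1$ and polarization $\lambda_1+2\lambda_2$) rather than re-deriving it from Theorem~\ref{thm:Fano_main}, and that for $M_Y$ the paper likewise asserts the identification of the polarization class (as $\lambda_1$, in its convention) without further justification---precisely the point you flag as the main obstacle.
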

\begin{proof}
In \cite{liurendapaper} the authors prove that if $M$ is a moduli space of Bridgeland stable objects in $\Ku(Y)$ with Mukai vector $v$ of dimension $2+v^2$, then there is an embedding of Hodge structures
$$H^2(M,\Z) \rightarrow H^*(\Ku(Y),\Z).$$
More precisely, the image of $H^2(M,\Z)$ is identified with the orthogonal complement $v^{\perp}$ of $v$ in the Mukai lattice. Thus, by Theorem \ref{thm:maintwistedcubic}, we have the Hodge isometry
$$H^2(M_Y,\Z) \cong \langle \lambda_1+2 \lambda_2 \rangle^{\perp}.$$
In particular, we can identify the polarization class on $M_Y$ with $\lambda_1$. Then, the primitive degree two lattice $H^2(M_Y,\Z)_{\text{prim}}$ is Hodge isometric to $\langle \lambda_1+2\lambda_2, \lambda_1 \rangle^{\perp}$. It follows that
$$H^2(M_Y,\Z)_{\text{prim}} \cong \langle \lambda_1,\lambda_2 \rangle^{\perp} \cong H^2(F_Y,\Z)_{\text{prim}},$$
which implies the statement.
\end{proof}

As explained in \cite{Debarre:hyper}, Proposition \ref{prop:period$M_Y$} can be used to reprove in a more direct way the result by Laza and Loojenga (see \cite{Laza:image}) about the image of the period map of cubic fourfolds, excluding the divisor of cubic fourfolds containing a plane. This is a work in progress of Bayer and Mongardi.

\subsection{Birational equivalence to Hilbert scheme of points on a K3 surface of $M_Y$}\label{sec:hilb}
Recall that by \cite{AddingtonNicolas2014} the eightfold $M_Y$ is deformation equivalent to a Hilbert scheme of points on a K3 surface. The next application of Theorem \ref{thm:maintwistedcubic} is a characterization of when $M_Y$ is birational to a Hilbert scheme of points on a K3 surface.
\begin{prop}
\label{prop:eightfoldvsHilb}
The hyperk\"ahler eightfold $M_Y$ is birational to a Hilbert scheme $S^{[4]}$ on a K3 surface $S$ if and only if $Y$ is a special cubic fourfold of discriminant $d$ satisfying the condition
\begin{equation}
\label{eq_1}
a^2d=6n^2-6n+2 \quad \text{for } a, n \in \Z.
\end{equation}
\end{prop}
\begin{proof}
By Theorem \ref{thm:maintwistedcubic} the eightfold $M_Y$ is a moduli space of stable objects in the Kuznetsov component of $Y$ with Mukai vector $2 \lambda_1 + \lambda_2$. As a consequence, by \cite{liurendapaper} the degree $2$ cohomology $H^2(M_Y, \Z)$ embeds in the Mukai lattice $\tilde{H}(\Ku(Y),\Z)$ and
$$H^2(M_Y, \Z) \cong \langle 2\lambda_1+\lambda_2 \rangle^{\perp} \subset \tilde{H}(\Ku(Y), \Z).$$

Now, assume that $M_Y$ is birational to $S^{[4]}$. 
By \cite[Proposition 5]{Addington:rational}, there is a element $w \in N(\Ku(Y))$ such that 
\begin{equation}
\label{eq_critAdd}
\chi(w,w)=0 \quad \text{and} \quad \chi(w,2\lambda_1+\lambda_2)=1.
\end{equation}
Here $\chi$ is the Euler pairing and $N(\Ku(Y))$ is the algebraic part of the Mukai lattice of $\Ku(Y)$. Set $n:=\chi(w,\lambda_1)$; by the second equation we have $\chi(w,\lambda_2)=1-2n$.  The lattice $\langle \lambda_1,\lambda_2,w \rangle$ has intersection form given by the matrix
$$
\begin{pmatrix}
-2 & 1 & n \\ 
1 & -2 & 1-2n \\ 
n & 1-2n & 0
\end{pmatrix},$$
whose determinant is $6n^2-6n+2$. Then the saturation of this lattice has discriminant $d$ satisfying $a^2d=6n^2-6n+2$, as we wanted.

Conversely, assume that $Y$ has a discriminant $d$ satisfying \eqref{eq_1}; then $a^2d \equiv 2 \pmod 6$. It follows that $d \equiv 2 \pmod 6$. In particular, by \cite[Lemma 9]{Addington:rational}, there is an element $\tau \in N(\Ku(Y))$ such that $\langle \lambda_1, \lambda_2, \tau \rangle$ has intersection form given by
$$
\begin{pmatrix}
-2 & 1 & 0 \\ 
1 & -2 & 1 \\ 
0 & 1 & 2k
\end{pmatrix} $$
with $d=2+6k$. Moreover, we have $a^2 \equiv 1 \pmod 3$, and thus $a \equiv 1$ or $2 \pmod 3$. 

Assume $a=1+3t$ for $t \in \Z$; we define
$$w:=t \lambda_1 +(2t + n) \lambda_2 + a \tau.$$
It is possible to check that $w$ satisfies conditions \eqref{eq_critAdd}. By \cite[Proposition 5]{Addington:rational}, we conclude that $M_Y$ is birational to a Hilbert scheme over a K3 surface.

Assume $a \equiv 2 \pmod 3 \equiv -1 \pmod 3$; then $a=-1+3t$. We set 
$$w:=-t \lambda_1 +(n-2t) \lambda_2 - a \tau.$$
Again by \cite[Proposition 5]{Addington:rational}, we deduce the statement.
\end{proof}

\begin{rmk}
Note that for $d=14$, the equation \eqref{eq_1} is satisfied with $a=1$ and $n=-1$. This is consistent with \cite{AddingtonNicolas2014} where the authors prove the result in the case of Pfaffian cubic fourfolds.
\end{rmk}

\subsection{Derived Torelli Theorem for cubic fourfolds}
\label{subsec:derTor}
In this section we apply Theorem \ref{thm:maintwistedcubic} to answer a question raised to us by Emanuele Macr\`i. We start with the general motivation:
\begin{question}[Derived Torelli Theorem]
\label{quest:derivedTorelli}
Let $Y$ and $Y'$ be two cubic fourfolds. Is it true that there is a Fourier-Mukai equivalence between $\Ku(Y)$ and $\Ku(Y')$ if and only if there is a Hodge isometry $H^*(\Ku(Y),\Z) \cong H^*(\Ku(Y'),\Z)$?
\end{question}
The question has positive answer for very general cubic fourfolds and cubic fourfolds with associated K3 surface by \cite[Theorem 1.5]{Huybrechts:cubicfourfold} and \cite[Corollary 29.7]{liurendapaper}.
Here we suggest a possible strategy to prove this statement, making use of the description of the eightfold $M_Y$ given by Theorem \ref{mainThm}. For this reason, we need to assume that $Y$ does not contain a plane (in this case the derived Torelli Theorem holds as recalled above).      

Assume that there is a Hodge isometry $\phi: H^*(\Ku(Y),\Z) \cong H^*(\Ku(Y'),\Z)$. Let $v:=2\lambda_1 + \lambda_2$ and we set $v':=\phi(v)$. By \cite{liurendapaper}, the moduli space $M_{\sigma'}(v')$ for $\sigma' \in \Stab(\Ku(Y'))$ is non empty and in particular is a hyperk\"ahler eightfold. By the Birational Torelli Theorem for hyperk\"ahler varieties (see \cite{Verbitsky:torelli}), we have that $M_{\sigma}(v)$ is birational to $M_{\sigma'}(v')$. Thus, by \cite{BM:projectivity}, we can find a stability condition $\sigma''$ such that $M_{\sigma}(v)$ is isomorphic to $M_{\sigma''}(v')$. By the construction in \cite{LLSvS}, the cubic fourfold $Y$ is embedded in $M_{\sigma}(v)$ as a Lagrangian submanifold. Thus, we can see $Y$ inside $M_{\sigma''}(v')$. We denote by $\mathcal{F}$ the restriction of the universal family $\mathcal{E}_C'$ on $M_{\sigma''}(v') \times Y'$ to $Y \times Y'$. We remark that the definition of $\FF$ is up to a twist by a line bundle pulled back from $Y$. The reason will be clear in the proof of Proposition \ref{prop:Y=Y'}.

\begin{question}[Macr\`i]
Does the Fourier-Mukai functor $\Phi_\FF: \Db(Y) \rightarrow \Db(Y')$, defined by $\Phi_{\FF}(-)=q_*(p^*(-) \otimes \FF)$ factorizes through an equivalence $\Ku(Y) \cong \Ku(Y')$ of the the Kuznetsov components?
\end{question}

Although this question remains open, in the simple case where $Y=Y'$ and $\phi \in \text{O}(H^*(\Ku(Y),\Z))$, we can obtain an interesting result, which was originally suggested by Macr\`i. 
Namely we show that the Fourier-Mukai functor $\Phi_{\mathcal{F}}$ commutes with the identity over $\Ku(Y)$.

Denote by $i$ the natural inclusion of $\Ku(Y)$ in $\Db(Y)$, and by $$i \circ i^*:=\R_{\OO_Y(-1)}\L_{\OO_Y}\L_{\OO_Y(1)}: \Db(Y) \to \Ku(Y) \xrightarrow{i} \Db(Y)$$
the projection functor into the Kuznetsov component, changing the notation of the previous sections. Note that we refer to the projection functor in $\Ku(Y)$ with respect to the semiorthogonal decomposition
$$\Db(Y)=\langle \OO_Y(-1), \Ku(Y), \OO_Y, \OO_Y(1) \rangle.$$

\begin{prop}
\label{prop:Y=Y'}
Let $Y$ be a cubic fourfold which does not contain a plane. Then $\Phi_{\mathcal{F}}=i \circ i^*$.
\end{prop}
\begin{proof}
By \cite[Theorem 3.7 and Proposition 3.8]{Kuznetsov2009Hochschild}, the composition $i \circ i^*$ is a Fourier-Mukai functor with kernel given by $\GG:=\text{pr}(\mathcal{O}_{\Delta})$. Here $\OO_{\Delta}$ denotes the structure sheaf of the diagonal in $Y \times Y$ and $\text{pr}: \Db(Y \times Y) \rightarrow \Db(Y) \boxtimes \Ku(Y)$ is the projection functor with respect to the semiorthogonal decomposition
$$\Db(Y \times Y)= \langle \Db(Y) \boxtimes \OO_Y(-1), \Db(Y) \boxtimes \Ku(Y), \Db(Y) \boxtimes \OO_Y , \Db(Y) \boxtimes \OO_Y(1) \rangle.$$

We claim that $\Phi_{\GG}(\OO_y)=\GG_{y}$ is $\sigma$-stable for every $y \in Y$. Indeed, given a point $y$ on the cubic fourfold, there is a non CM twisted cubic curve $C$ on $Y$ which has $y$ as embedded point. In particular, we have the sequence
\begin{equation}
\label{seq_nonCM}
0 \rightarrow \II_{C/Y}(2) \rightarrow \II_{C_0/Y}(2) \rightarrow \OO_y \rightarrow 0,
\end{equation}
where $C_0$ is the plane cubic curve, singular in $y$, defined by $C$. The ideal sheaf of $C_0$ in $Y$ has the following resolution:
\begin{equation}
\label{res_C0}
0 \rightarrow \OO_Y(-1) \rightarrow \OO_Y^{\oplus 3} \rightarrow \OO_Y(1)^{\oplus 3} \rightarrow \II_{C_0/Y}(2) \rightarrow 0.
\end{equation}
We recall that $i^*:=\R_{\OO_Y(-1)}\L_{\OO_Y}\L_{\OO_{Y}(1)}$. We observe that $i^*(\II_{C_0/Y}(2))=0$. Indeed, we split the sequence \eqref{res_C0} in two exact sequences
$$0 \rightarrow \KK \rightarrow \OO_Y(1)^{\oplus 3} \rightarrow \II_{C_0/Y}(2) \rightarrow 0$$
and
$$0 \rightarrow \OO_Y(-1) \rightarrow \OO_Y^{\oplus 3} \rightarrow \KK \rightarrow 0.$$
From the first sequence we get $\L_{\OO_Y(1)}(\II_{C_0/Y}(2)) \cong \L_{\OO_Y(1)}(\KK)[1]$. On the  other hand, $\L_{\OO_Y(1)}$ has not effect on the second sequence, because the objects are in $\langle \OO_Y(1) \rangle^{\perp}$. Applying $\L_{\OO_Y}$, we obtain 
$$\L_{\OO_Y}\L_{\OO_Y(1)}(\KK) \cong \L_{\OO_Y}(\OO_Y(-1))=\OO_Y(-1)[1].$$
It follows that
$$\L_{\OO_Y}\L_{\OO_Y(1)}(\II_{C_0/Y}(2)) \cong \OO_Y(-1)[2].$$
Since $\R_{\OO_Y(-1)}(\OO_Y(-1))=0$, we deduce that $i^*(\II_{C_0/Y}(2))=0$. Thus by the sequence \eqref{seq_nonCM}, we deduce that $i^*(\II_{C/Y}(2)) \cong i^*(\OO_y)[-1]$.

Now, note that $i^*(\II_{C/Y}(2)) \cong i^*(\II_{C/S}(2))$, where $S$ is the cubic surface containing $C$. Indeed, by the resolution
$$0 \rightarrow \OO_Y \rightarrow \OO_Y(1)^{\oplus 2} \rightarrow \II_{S/Y}(2) \rightarrow 0,$$
we see that $\II_{S/Y}(2)$ is in $\langle \OO_Y,\OO_Y(1) \rangle$. Hence, $i^*(\II_{S/Y}(2))=0$. Using the exact sequence
$$0 \rightarrow \II_{S/Y}(2) \rightarrow \II_{C/Y}(2) \rightarrow \II_{C/S}(2) \rightarrow 0,$$
we get 
$$i^*(\II_{C/Y}(2)) \cong i^*(\II_{C/S}(2))= F_C'.$$
By the previous computation, we deduce that $i^*(\OO_y) \cong F_C'[1]$, which is $\sigma$-stable by Theorem \ref{thm:maintwistedcubic}.

It follows that $\GG$ defines an inclusion of $Y$ in the eightfold $M_{\sigma}(v)$ by
$$y \mapsto \Phi_{\GG}(\OO_y).$$
Thus $\GG$ has to be isomorphic to the restriction of the universal family $\EE_C'$ of $M_{\sigma}(v) \times Y$ to $Y \times Y$. Up to a twist of a line bundle on $Y$ pulled back via $p$, we conclude that $\GG \cong \FF$, which gives the statement.
\end{proof}

In the general case, it is expected that the Fourier-Mukai functor $\Phi_{\mathcal{F}}$ factorizes through an equivalence between the Kuznetsov categories. This would give a positive answer to Question \ref{quest:derivedTorelli}.

$$ $$

\bibliography{main}                      

\newcommand{\etalchar}[1]{$^{#1}$}
\begin{thebibliography}{BLM{\etalchar{+}}19}

\bibitem[Add16]{Addington:rational}
Nicolas Addington.
\newblock On two rationality conjectures for cubic fourfolds.
\newblock {\em Math.\ Res.\ Lett.}, 23(1):1--13, 2016.

\bibitem[AG20]{AG}
Nicolas Addington and Franco Giovenzana.
\newblock On the period of {L}ehn, {L}ehn, {S}orger, and van {S}traten's
  symplectic eightfold, 2020, arXiv:2003.10984.

\bibitem[AL14]{AddingtonNicolas2014}
Nicolas Addington and Manfred Lehn.
\newblock On the symplectic eightfold associated to a pfaffian cubic fourfold.
\newblock {\em J. Reine Angew. Math., to appear}, 2014.

\bibitem[BBD82]{BBD}
Alexander~A.\ Be{\u\i}linson, Joseph Bernstein, and Pierre Deligne.
\newblock Faisceaux pervers.
\newblock In {\em Analysis and topology on singular spaces, {I} ({L}uminy,
  1981)}, volume 100 of {\em Ast\'erisque}, pages 5--171. Soc. Math. France,
  Paris, 1982.

\bibitem[BBMP]{BBMP}
Arend Bayer, Aaron Bertram, Emanuele Mac\`i, and Alex Perry.
\newblock Surfaces in special cubic fourfolds.
\newblock in preparation.

\bibitem[BD82]{Beauville-cubic4fold}
Arnaud Beauville and Ron Donagi.
\newblock La vari\'et\'e des droites d'une hypersurface cubique de dimension 4.
\newblock {\em C.\ R.\ Acad.\ Sci.\ Paris}, 1982.

\bibitem[Bea83]{Beauville:HK}
Arnaud Beauville.
\newblock Vari\'et\'es {K}\"ahleriennes dont la premi\`ere classe de {C}hern
  est nulle.
\newblock {\em J. Differential Geom.}, 18(4):755--782, 1983.

\bibitem[BLM{\etalchar{+}}19]{liurendapaper}
Arend Bayer, Mart\'i Lahoz, Emanuele Macr{\`{\i}}, Howard Nuer, Alexander
  Perry, and Paolo Stellari.
\newblock Stability conditions in families and families of hyperk\"ahler
  varieties, 2019, arXiv:1902.08184.

\bibitem[BLMS17]{BLMS:kuzcomponent}
Arend Bayer, Mart\'i Lahoz, Emanuele Macr{\`{\i}}, and Paolo Stellari.
\newblock Stability conditions on {K}uznetsov components, 2017, (Appendix joint
  also with X. Zhao) arXiv:1703.10839.

\bibitem[BM14]{BM:projectivity}
Arend Bayer and Emanuele Macr{\`{\i}}.
\newblock Projectivity and birational geometry of {B}ridgeland moduli spaces.
\newblock {\em J.\ Amer.\ Math.\ Soc.}, 27(3):707--752, 2014, arXiv:1203.4613.

\bibitem[BMS16]{BMS:stabCY3s}
Arend Bayer, Emanuele Macr{\`{\i}}, and Paolo Stellari.
\newblock The space of stability conditions on abelian threefolds, and on some
  {C}alabi-{Y}au threefolds.
\newblock {\em Inventiones Mathematicae}, 206:1--65, 2016.

\bibitem[Deb18]{Debarre:hyper}
Olivier Debarre.
\newblock Hyperk\"ahler manifolds, 2018, arXiv:1810.0208.

\bibitem[DM17]{Debarre-Macri:periodHK}
Olivier Debarre and Emanuele Macr\`i.
\newblock Period map for hyperk\"ahler fourfolds.
\newblock {\em Int.\ Math.\ Res.\ Not.\ IMRN, to appear}, 2017,
  arXiv:1704.01439.

\bibitem[Has00]{Hassett-specialcubic4fold}
Brendan Hassett.
\newblock Special cubic fourfolds.
\newblock {\em Compositio Mathematica}, 120(1):1--23, 2000.

\bibitem[HR19]{HuyRen:Torelliforcubicfourfold}
Daniel Huybrechts and J\o rgen~Vold Rennemo.
\newblock Hochschild cohomology versus the {J}acobian ring and the {T}orelli
  theorem for cubic fourfolds.
\newblock {\em Algebr. Geom.}, 6(1):76--99, 2019.

\bibitem[Huy17]{Huybrechts:cubicfourfold}
Daniel Huybrechts.
\newblock The {K}3 category of a cubic fourfold.
\newblock {\em Compositio Mathematica}, (153):586--620, 2017.

\bibitem[KM09]{Kuznetsov2009Symplectic}
Alexander Kuznetsov and Dimitri Markushevich.
\newblock Symplectic structures on moduli spaces of sheaves via the {A}tiyah
  class.
\newblock {\em Journal of Geometry and Physics}, 59(7):843--860, 2009.

\bibitem[Kuz09]{Kuznetsov2009Hochschild}
Alexander Kuznetsov.
\newblock Hochschild homology and semiorthogonal decompositions.
\newblock 2009, arXiv:0904.4330.

\bibitem[Kuz10]{Kuz:fourfold}
Alexander Kuznetsov.
\newblock Derived categories of cubic fourfolds.
\newblock In {\em Cohomological and geometric approaches to rationality
  problems}, volume 282 of {\em Progr. Math.}, pages 219--243. Birkh\"auser
  Boston Inc., Boston, MA, 2010, arXiv:0808.3351.

\bibitem[Kuz11]{Kuznetsov2011Base_change}
Alexander Kuznetsov.
\newblock Base change for semiorthogonal decompositions.
\newblock {\em Compositio Math.}, 147:852--876, 2011.

\bibitem[Laz10]{Laza:image}
Radu Laza.
\newblock The moduli space of cubic fourfolds via the period map.
\newblock {\em Ann.\ of Math.}, 172(2)1:673--–711, 2010.

\bibitem[LLMS18]{LLMS}
Mart\'{\i} Lahoz, Manfred Lehn, Emanuele Macr\`\i, and Paolo Stellari.
\newblock Generalized twisted cubics on a cubic fourfold as a moduli space of
  stable objects.
\newblock {\em J. Math. Pures Appl. (9)}, 114:85--117, 2018.

\bibitem[LLSvS13]{LLSvS}
Christian Lehn, Manfred Lehn, Christoph Sorger, and Duco van Straten.
\newblock Twisted cubics on cubic fourfolds.
\newblock {\em Journal f\"ur die Reine und Angewandte Mathematik},
  2017:87–--128, 2013.

\bibitem[LZ19]{Chunyi-Xiaolei:birational}
Chunyi Li and Xiaolei Zhao.
\newblock Birational models of moduli spaces of coherent sheaves on the
  projective plane.
\newblock {\em Geom. Topol.}, 23(1):347--426, 2019.

\bibitem[Muk87]{Mukai:BundlesK3}
S.~Mukai.
\newblock On the moduli space of bundles on {$K3$} surfaces. {I}.
\newblock In {\em Vector bundles on algebraic varieties ({B}ombay, 1984)},
  volume~11 of {\em Tata Inst. Fund. Res. Stud. Math.}, pages 341--413. Tata
  Inst. Fund. Res., Bombay, 1987.

\bibitem[Ver09]{Verbitsky:torelli}
Misha Verbitsky.
\newblock A global {T}orelli theorem for hyperk\"ahler manifolds.
\newblock {\em Duke Math. J.}, 162(15):2929--2986, 2009.

\end{thebibliography}
\bibliographystyle{halpha}     
\end{document}